\documentclass[11pt]{preprint} 

\usepackage[a4paper,innermargin=1.2in,outermargin=1.2in,
bottom=1.5in,marginparwidth=1in,marginparsep
=3mm]{geometry}
\usepackage{bigints}
\usepackage[T1]{fontenc} 
\usepackage{fourier} 
\usepackage[english]{babel} 
\usepackage{amsmath,amsfonts,amsthm} 
\usepackage{tikz-cd}

\usepackage{shuffle}
\usepackage{ amssymb }
\usepackage{fancyhdr} 
\usepackage{comment}
\usepackage{stmaryrd}

\usepackage{cprotect}
\usepackage{hyperref}
\usepackage{mathrsfs}
\usepackage{mhequ}
\usepackage{microtype}
\usepackage{esint}

\usepackage{xstring}

\usepackage{orcidlink}

\colorlet{darkblue}{blue!90!black}
\colorlet{darkred}{red!90!black}

\newcommand{\opP}{\operatorname{P}}

\newcommand{\mbG}{\mathbb{G}}

\newcommand{\mbN}{\mathbb{N}}

\newcommand{\mbR}{\mathbb{R}}

\newcommand{\RR}{\mathbb{R}}

\newcommand{\NN}{\mathbb{N}}

\newcommand{\frakg}{\mathfrak{g}}

\newcommand{\dd}{\mathop{}\!\mathrm{d}}

\newcommand{\mcD}{\mathcal{D}}

\newcommand{\mcP}{\mathcal{P}}

\newcommand{\mfB}{\mathfrak{B}}

\newcommand{\mfg}{\mathfrak{g}}

\newcommand{\mfr}{\mathfrak{r}}
\newcommand{\mfs}{\mathfrak{s}}

\renewcommand{\bf}{\mathbf{f}}

\newcommand{\fraks}{\mathfrak{s}}

\usepackage{float}

\newcommand{\eps}{\varepsilon}

\newcommand{\E}{\mathbb{E}}

\newcommand{\dil}{\operatorname{D}}

\newcommand{\vertiii}[1]{{\left\vert\kern-0.25ex\left\vert\kern-0.25ex\left\vert #1 
		\right\vert\kern-0.25ex\right\vert\kern-0.25ex\right\vert}}

\newtheorem{theorem}{Theorem}[section]

\newtheorem{corollary}[theorem]{Corollary}
\newtheorem{claim}[theorem]{Claim}
\newtheorem{lemma}[theorem]{Lemma}
\newtheorem{definition}{Definition}
\newtheorem{prop}[theorem]{Proposition}
\newtheorem{assumption}[theorem]{Assumption}
\theoremstyle{remark}
\newtheorem{remark}[theorem]{Remark}

\newcommand{\id}{\mathrm{id}}
\numberwithin{equation}{section} 
\numberwithin{figure}{section} 
\numberwithin{table}{section} 

\colorlet{symbols}{blue!90!black}
\colorlet{testcolor}{green!60!black}

\makeatletter

\DeclareRobustCommand{\TitleEquation}[2]{\texorpdfstring{\StrLeft{\f@series}{1}[\@firstchar]$\if%
		b\@firstchar\boldsymbol{#1}\else#1\fi$}{#2}}

\makeatother

\usepackage{enumitem}
\setlist[enumerate]{itemsep=0pt, topsep=0.2pt}
\setlist[itemize]{itemsep=0pt, topsep=0pt}

\colorlet{darkblue}{blue!90!black}
\colorlet{darkgreen}{green!50!black}
\colorlet{darkred}{red!90!black}

\newcommand{\horrule}[1]{\rule{\linewidth}{#1}} 

\title{	
\horrule{0.5pt} \\[0.4cm] 
\large Weighted Besov Spaces on Homogeneous Lie Groups and Applications to Parabolic Anderson Models 
 \\ 
\horrule{0.5pt} \\[0.5cm] 
}
\author{Harprit Singh \orcidlink{0000-0002-9991-8393} }
\institute{
ETHZ, Z\"urich, Switzerland, \email{harprit.singh@math.ethz.ch}
 }
 
\allowdisplaybreaks[2]
\begin{document}

\maketitle 
%

\begin{abstract}
We develop an intrinsic theory of weighted, inhomogeneous Besov spaces on general homogeneous Lie groups without recourse to group-specific arguments. Starting from a definition in terms of localised test functions, we establish an equivalent, wavelet-like, multiscale characterisation.
 This provides a unified mechanism for deriving Besov embeddings, a Taylor-remainder characterisation, Young-type product estimates, Schauder estimates for convolution semigroups, and a weighted Kolmogorov criterion for random distributions.

We apply this framework to parabolic Anderson-type equations associated with positive Rockland operators and with singular initial data. For space-time noise, our results cover the Young regime. For purely spatial noise, we also establish well-posedness in the first singular regime using a variant of the Cole-Hopf transform for Rockland operators. Both applications rely on a mild sewing lemma that accommodates time-dependent Banach spaces and increments with a singularity at the initial time.
\end{abstract}

\tableofcontents

\section{Introduction}
Besov spaces play a central role in the study of partial and stochastic partial differential equations. When the equation involves differential operators that are hypo-elliptic, such as sub-Laplacians, the kinetic operator or Rockland operators, it is often fruitful to interpret the underlying space as a homogeneous Lie group. 
Besov-type spaces on such groups have been studied from several complementary perspectives; see, for example, \cite{folland_stein_82_hardy, Besov_def, FurioliMelziVeneruso, Führ, LPtheorem, hu2025besovtriebellizorkinspaceshomogeneous}. 
Nevertheless, many estimates required for the well-posedness theory of PDEs, and especially pathwise SPDEs, continue to be established on a case-by-case basis, see  \cite{Tindel_ito, Tindel_Big} for the Heisenberg group and \cite{KineticZhu} for the kinetic group. These constructions, as well as classical approaches based on the representation theory of the underlying group, cf.\ \cite{fischer_ruzhansky_18_quantisation}, become technically involved as the structure of the group becomes complicated, requiring bespoke development for each equation.

This paper provides an intrinsic and comparatively direct framework for establishing properties of weighted, inhomogeneous Besov spaces $B^{\alpha,w}_{p,q}(\mathbb{G})$ on a general homogeneous Lie group $\mathbb{G}$, needed in the analysis of pathwise SPDEs. 
We demonstrate the scope of this framework by giving self-contained well-posedness proofs for infinite volume parabolic Anderson models 
\begin{equ}\label{eq:introPam}
(\partial_t+ \mathcal{R})u= u\cdot \xi, \qquad u(0)=u_{0}
\end{equ}
associated with a positive Rockland operator $\mathcal{R}$ 
on an arbitrary homogeneous Lie group and allowing singular initial conditions $u_{0}$, including Dirac masses.

Our first result on \eqref{eq:introPam}, Theorem~\ref{thm:PAM_YOung}, establishes pathwise well-posedness in the Young regime, i.e. in the regularity regime where the product on the right hand side can be classically defined, and thereby substantially extends the Heisenberg-group theory of \cite{Tindel_Big}, see Remark~\ref{rem:strengthening} for a precise comparison. Our second application, Theorem~\ref{thm:first singular},  shows well-posedness of \eqref{eq:introPam} beyond the Young regime in the case of purely spatial noise, thereby extending \cite[Thm.~6.4]{MS25} in several directions, see Remark~\ref{rem:singular_comparision}.

\paragraph{The functional analytic framework}

The first part of the paper develops an intrinsic theory of weighted, inhomogeneous Besov spaces $B^{\alpha,w}_{p,q}(\mathbb G)$ on a general homogeneous Lie group $\mathbb G$, starting from a definition in terms of localised test functions, Definition~\ref{def:Besov}. The central functional-analytic result is Theorem~\ref{prop:equiv}, which gives an equivalent multiscale characterisation of $B^{\alpha,w}_{p,q}(\mathbb G)$ based on a single pair of test functions $(\phi,\rho)$ satisfying Assumption~\ref{ass:pair}.

Pairs of this type have previously been used in proofs of the reconstruction
theorem in regularity structures
\cite{Hai14,friz_hairer_20_introduction,MS25}. Here their role is to provide a common mechanism for developing the weighted Besov space theory, which is agnostic to the details of the group structure of $\mathbb{G}$ and in particular completely avoids its representation theory.
The characterisation of Theorem~\ref{prop:equiv} is conceptually close to the wavelet approach to Besov spaces on $\mathbb{R}^d$, cf.~\cite{meyer1992wavelets}.
In contrast to existing works on wavelet-type characterisations on homogeneous Lie groups, see \cite{Führ,hu2025besovtriebellizorkinspaceshomogeneous}, the functions $(\varphi,\rho)$ are compactly supported, which is important for working with the general class of weights of Definition~\ref{def:weight}, and their construction is elementary, see Remark~\ref{rem:existence}. 
Among the main functional-analytic results derived from
Theorem~\ref{prop:equiv} are the following:
\begin{enumerate}
\item For $\alpha>0$ we recover an equivalent Taylor remainder characterisation of $B^{\alpha,w}_{p,q}$ in Theorem~\ref{prop:difference}. 
\item A Young-type multiplication theorem, Theorem~\ref{prop:young}, showing that pointwise multiplication of functions extends to Besov spaces of regularity $\alpha,\beta\in \mathbb{R}$ as long as $\alpha+ \beta>0$.
This is related, but distinct, to the para-product estimates for positive regularity Besov spaces discussed on the Heisenberg group in \cite{Tindel_ref_gallager} and more generally in \cite{gallager_sire}, see also \cite{Gallager_phase}.
\item Convolution estimates for singular kernels, see Proposition~\ref{prop:schauder}. These imply Schauder estimates for heat semi-groups associated to positive Rockland operators,
as it is well known that their heat kernels satisfy the assumptions we make on kernels.
\end{enumerate}

\noindent We also derive Besov embeddings with respect to the parameters $\alpha,p,q$ and the weight $w$, see Corollary~\ref{prop:embedding}, 
and \cite[Thm.~5.3]{LPtheorem} for analogous embeddings for
unweighted Besov spaces, starting from a spectral characterisation.
Lemma~\ref{lem:kolmogorov} provides a Kolmogorov criterion, which lets one verify when a random field has a modification that belongs to a Besov space $B^{\alpha,w}_{p,q}(\mathbb{G})$.

\begin{remark}
Throughout, we work with a broad class of admissible weights, see
Definition~\ref{def:weight}. We specify explicitly where additional
properties of the weight are required. Formulating results in this generality is particularly useful in the pathwise study of SPDEs in unbounded spaces such as $\mathbb{G}$, where stationary noises become unbounded.
\end{remark}

\begin{remark}
Closely related functional-analytic results were recently obtained in
\cite{Tindel_Big} for the Heisenberg group equipped with its sub-Laplacian.
The approach in that work is based on the coordinate-wise framework of
\cite{Tindel_ref1} and builds on the paraproduct estimates of
\cite{Tindel_ref_gallager}, and thereby heavily uses the group-specific structure.
It therefore seems challenging to pursue said approach in the generality of this article.
\end{remark}

Let us emphasise that a principal advantage of the intrinsic and
group-uniform viewpoint developed here is that many arguments can be organised
in close analogy with their Euclidean counterparts, without introducing
separate coordinate- or representation-theoretic constructions for each
group.\footnote{
In the case of $\mathbb{G}=\mathbb{R}^d$, while the exact arguments and definitions used here, in particular the characterisation of Theorem~\ref{prop:equiv} for $p\neq \infty$, do not seem to be present in the existing literature, the results on Besov spaces are known in that case. 
} The main contribution therefore lies not in each
individual argument, but in the unified approach that yields concise and
largely self-contained proofs based on the standard structural properties of
homogeneous Lie groups recalled in
Section~\ref{sec:analysis_background}.

\paragraph{PAM in the Young Regime}
We apply the functional analytic results about weighted Besov spaces in Theorem~\ref{prop:fixedpoint} to 
establish well-posedness of PAM-type equations 
\begin{equ}\label{eq:illustrate}
(\partial_t+ \mathcal{R})u= \sum_{d(I)<m} c_I X^{I}u\cdot\xi^{(I)}, \qquad u(0)=u_0\ ,
\end{equ}
for $\mathcal{R}$ a positive Rockland operator, $c_I\in \mathbb{R}$ and $\xi^{(I)}$ space-time distributions of space-time regularity such that the equation belongs to the Young regime, 
see Section~\ref{sec_applications to SPDE} for the precise conditions, and with singular initial condition.
%
%

%

Combining
Theorem~\ref{prop:fixedpoint} with the Kolmogorov criterion, Lemma~\ref{lem:kolmogorov} yields the following precise special case.
We use the scaling notation 
$\phi^{\lambda}(z):=\frac{1}{\lambda^{|\mfs|}}\phi \left(\frac{1}{\lambda}\cdot  (z)\right)$ where $\cdot$ is the intrinsic dilation on $\mathbb{G}$ and $|\fraks|$ the homogeneous dimension, see Section~\ref{sec:analysis_background}.

\begin{theorem}\label{thm:PAM_YOung}

Let $\alpha\leq 0< H\leq 1$ and assume $W: [0,T]\to \mathcal{D}'(\mathbb{G})$ is a stationary under left-translations random process satisfying for any smooth compactly supported function $\phi$
and $q\in \mathbb{N}$
$$ 
\E\big[\big|W_t(\phi^{\lambda})-W_s(\phi^\lambda)\big|^q\big]\lesssim C_{\phi,q}|t-s|^{Hq} \lambda^{ \alpha q}
$$
uniformly over $0\leq s<t\leq T$ and $\lambda\in (0,1]$.
If $\alpha/m +H>1/2 $, then the equation 
\begin{equ}\label{Pam_Young}
(\partial_t+ \mathcal{R})u= u\dot{W}, \qquad u(0)=u_{0}
\end{equ}
is globally well-posed \footnote{We refer to Section~\ref{sec:fixed point} for the precise function spaces in which this and the more general \eqref{eq:illustrate} is well-posed.} for any initial condition $u_0\in B^{-m\nu, w}_{p, \infty}$ 
for $p\in [1,\infty]$,
 $ \nu\in [0,H+\alpha/m) $
 and any weight $w$ such that
$|w(x)|\leq C \exp(C|x|)$ for some $C>0$ .
\end{theorem}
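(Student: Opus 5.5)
The proof has two parts: a probabilistic step and a deterministic step. The probabilistic step uses the Kolmogorov criterion, Lemma~\ref{lem:kolmogorov}, to place $W$ almost surely in a space of H\"older-in-time paths with values in a weighted Besov space. The deterministic step then invokes Theorem~\ref{prop:fixedpoint} pathwise.

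\emph{Regularity of the noise.} Fix a polynomial weight $w_0(x)=(1+|x|)^{-\kappa}$ with $\kappa>0$ large enough that $w_0\in L^p$ for the $p$ we use. Stationarity under left translations and the moment bound make $\E|(W_t-W_s)(\phi^\lambda_x)|^q$ bounded uniformly in $x$ by $|t-s|^{Hq}\lambda^{\alpha q}$. Lemma~\ref{lem:kolmogorov} then gives a modification with $W_t-W_s\in B^{\alpha-\epsilon,w_0}_{p,p}$ and a moment bound. Choosing $p$ large, the Besov embeddings of Corollary~\ref{prop:embedding} move this to $B^{\alpha-2\epsilon,w_0}_{\infty,\infty}$. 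A Kolmogorov--Chentsov argument in time, with $q$ arbitrarily large, upgrades this to $W\in C^{H-\epsilon}([0,T];B^{\alpha-2\epsilon,w_0}_{\infty,\infty})$ almost surely.

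Since $\alpha/m+H>1/2$ is a strict inequality, choose $\epsilon>0$ so small that the loss from $\epsilon$ preserves it. Also choose $\epsilon$ so that $\nu<H+\alpha/m-O(\epsilon)$, which is possible because $\nu<H+\alpha/m$ is strict.

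\emph{Deterministic fixed point.} With this realisation of $\dot W$, the mild formulation reads
\[
u_t=P_tu_0+\int_0^t P_{t-s}\,u_s\,\dd W_s .
\]
The stochastic integral is defined through the mild sewing lemma, using the germ $P_{t-s}(u_s\,\delta W_{s,t})$. The product $u_s\,\delta W_{s,t}$ is controlled by the Young product theorem, Theorem~\ref{prop:young}, provided the spatial regularity $\beta$ of $u$ satisfies $\beta+\alpha>0$.

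The Schauder estimate, Proposition~\ref{prop:schauder}, supplies the smoothing of $P_t$. Taking $\beta$ close to $m(H-\epsilon)$, the sewing condition requires a total time exponent larger than $1$. That exponent is $H$ plus $(\beta+\alpha)/m$, up to losses of order $\epsilon$, and it exceeds $1$ exactly when $\alpha/m+H>1/2$.

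The initial datum $u_0\in B^{-m\nu,w}_{p,\infty}$ makes $P_tu_0$ blow up like $t^{-\nu-\beta/m}$ in the spatial norm of $u$. The mild sewing lemma is designed for this: it allows time-dependent Banach spaces and increments singular at $t=0$. The singularity stays integrable precisely because $\nu<H+\alpha/m$.

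Theorem~\ref{prop:fixedpoint} packages all of this into a contraction on a weighted-in-time space on a short interval. Since the equation is linear and the estimates are linear in $u$, the length of that interval does not depend on the size of $u$. Iterating, and restarting from $u_\tau$ (which is more regular), gives global well-posedness on $[0,T]$.

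\emph{Weights.} The main obstacle is the weights. Products of $w$ with the noise weight $w_0$ must remain admissible in the sense of Definition~\ref{def:weight}. The heat kernel, by the known Gaussian-type bounds for positive Rockland operators, must act boundedly on spaces weighted by $w$ of growth $\exp(C|x|)$. This requires the weight-compatibility hypothesis of Proposition~\ref{prop:schauder}. I would handle it by letting the weight grow in time, $w_t(x)=w(x)e^{-\ell t(1+|x|)}$ or similar, so that the polynomial loss from $w_0$ is absorbed. I would then verify that this family fits the time-dependent Banach space framework of the mild sewing lemma.
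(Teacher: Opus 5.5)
Your route is the paper's. Lemma~\ref{lem:kolmogorov}, with a large moment exponent $k$ and then Corollary~\ref{prop:embedding}, puts $W$ in $C^{h}([0,T],B^{m\alpha_0,p_a}_{\infty,\infty})$ with $h\uparrow H$ and $m\alpha_0\uparrow\alpha$. Theorem~\ref{prop:fixedpoint} is then applied with only $c_0\neq0$. That theorem already contains the sewing, Schauder and weight arguments, so the real content of the proof is checking its hypotheses. Your parameter bookkeeping goes wrong there.

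\textbf{Spatial regularity of $u$.} In $\mathcal{B}^{\gamma,r,r+\nu}_T$ one takes $mr$ just above $-\alpha$, just enough for Theorem~\ref{prop:young}: precisely $-\alpha_0<r<(2\alpha_0+2(h-a)-1)\wedge(h-a-\nu)$. One does not take $\beta\approx mH$. The solution cannot have spatial regularity beyond roughly $mH+\alpha$. With $\beta\approx mH$, your own exponent $H+(\beta+\alpha)/m=2H+\alpha/m$ exceeds $1$ under a condition that is not equivalent to $\alpha/m+H>1/2$. Moreover, the initial-data requirement $\sigma>\eta_1$ of Lemma~\ref{lem:sewing}, i.e.\ $H>\nu+\beta/m$, would then force $\nu<O(\epsilon)$. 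The correct count is as follows. Sewing needs the time regularity $\gamma$ of $u$ in $V_{r,t}$ to satisfy $\gamma>1-H$, while $\gamma+r<H+\alpha/m$ and $r>-\alpha/m$; this is solvable iff $2(H+\alpha/m)>1$. Separately, $r+\nu<H$ together with $r>-\alpha/m$ is solvable iff $\nu<H+\alpha/m$.

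\textbf{Weights.} The exponent of the polynomial noise weight must be small, not ``large enough''. In the paper's convention the weight divides: the noise lives in $B^{\cdot,p_a}$ with $p_a=(1+|x|)^a$, and Lemma~\ref{lem:kolmogorov} needs $p_a^{-1}\in L^{k}$, i.e.\ $a>|\fraks|/k$. Absorbing $p_a$ into the time-growing weight via \eqref{weights_trade} costs a factor $|t-s|^{-a}$. Hence every $H$ above is really $h-a$, and the conditions \eqref{conditions} and the range of $\nu$ read
\[
2(h-a+\alpha_0)>1, \qquad \nu<h-a+\alpha_0 .
\]
These hold once $k$ is large and $a\in(|\fraks|/k,\,\cdot\,)$ is chosen small; your sketch never accounts for this loss.

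Finally, no new time-dependent weight built from $w$ is needed. Theorem~\ref{prop:fixedpoint} already uses the increasing weights $w_t=e_{\ell+t}$. Since $w\le Ce^{C|x|}$, Item~3 of Corollary~\ref{prop:embedding} gives $B^{-m\nu,w}_{p,\infty}\subset B^{-m\nu,e_\ell}_{p,\infty}=V_{-\nu,0}$ for $\ell\ge C$. Building an arbitrary $w$ into $w_t$ as you propose would require verifying \eqref{assump:weight} for it. That need not hold for a weight that is only bounded above, so Proposition~\ref{prop:schauder} would be unavailable.
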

\begin{remark}\label{YoungRegime_heuristic}
We observe that the condition $\alpha/m +H>1/2 $ corresponds to the usual heuristic for the right-hand side of the equation to be well defined. If one uses intrinsic function spaces on $\mathbb{G}\times \mathbb{R}$ and that $\partial_t+\mathcal{R}$ is $m$ regularising on this space, it is simply a rewriting of the condition that if we denote by $\bar{\alpha}= \alpha+m (H-1)$ the (intrinsic) space-time regularity\footnote{Of course there is a slight loss of regularity due to the use of Kolmogorov's criterion, however since all inequalities are strict this can be absorbed.} of $\dot{W}$, that 
$$  \bar{\alpha} + (\bar{\alpha}+m)>0\ .$$
\end{remark}

\begin{remark}\label{rem:strengthening}
Compared with \cite{Tindel_Big}, Theorem~\ref{thm:PAM_YOung} replaces the structurally simple
Heisenberg group and its sub-Laplacian by an arbitrary homogeneous Lie group
and a general positive Rockland operator. Furthermore, instead of  regular initial conditions, it allows for singular initial conditions which include the Dirac delta measure. 
Lastly, the class of noises considered here contains those considered therein.

%
%
 
%
%
\end{remark}

\paragraph*{PAM in the first singular regime}
We also obtain well-posedness in the first singular regime for noises which only depend on space. 
In order to specify this regime, let $r_I\in\mathbb{R}$ be the coefficients of the Rockland operator 
\begin{equ}\label{eq:form rockland}
 \mathcal{R}=\sum_{d(I)=m}r_I X^I\  
\end{equ}
 when written in terms of left-invariant operators $X^I$, see Section~\ref{sec:analysis_background}, 
and set
\begin{equation}
 \delta_{\mathcal{R}}:=\min\{s_j:\text{there is an }I\text{ with }r_I\neq0,
                    \ |I|\geq 2,\text{ and }i_j>0\}.
 \label{eq:delta-R}
\end{equation}

\begin{theorem}\label{thm:first singular}
For $\zeta\leq 0$, let $\xi$ be a centred stationary Gaussian random field on $\mathbb{G}$ with covariance\footnote{This should of course be understood 
rigorously as $\E[\xi(\psi)\xi(\phi)]=\langle\psi, \phi*C\rangle\ $}
\begin{equ}\label{eq:covariance_noise}
 \E[\xi(x)\xi(y)]=C(y^{-1}x)
 \end{equ}  where $C\in \mathcal{D}'(\mathbb{G})$ is a distribution satisfying $|C(\phi^{\lambda})|\lesssim \lambda^{2\zeta}$ uniformly in $\lambda\in (0,1]$ for every $\phi\in C_c^\infty(B_1)$ and which
 away from the origin agrees with a smooth function satisfying $|X^{I} C(x)|\lesssim_I 1+|x|^{2\zeta-d(I)}$ uniformly over $x\in\mathbb{G}\setminus\{e \}$.


Set $\zeta_\star
:=
\max\left\{
-\frac{2m}{3},
-\frac{m+\delta_{\mathcal R}}{2},
-\frac{2m+|\fraks|}{4},
-\frac{|\fraks|}{2}-\delta_{\mathcal R}
\right\}
$
and assume that 
 $\zeta\in\big( \zeta_\star, -\frac{m}{2}\big]$ and let $\xi_\eps=\xi* \rho^{\eps}$ for $\rho\in C_c(B_1)$ such that $\int\rho=1$. There exist constants $C_{\eps}$ such that for any $\eps>0$ the equation 
\begin{equ}\label{eq:singular}
(\partial_t+ \mathcal{R})u_\eps= u_\eps(\xi_\eps-C_\eps) \ .
\end{equ}
is well-posed\footnote{We refer to Section~\ref{sec:proof_of_singularPAM} for the precise function spaces.}  for
 initial conditions $u_0\in B_{p,\infty}^{-m\nu,w}$ 
 for any $p\in [1,\infty]$,
 $\nu\in [0,1+\zeta/m)$ and any weight $w$ such that
$|w(x)|\leq C \exp(C|x|)$ for some $C>0$ and such that furthermore,
 there exists $u:(0,T]\to C(\mathbb{G})$ independent of the choice of $\rho$ such that $u_\eps \to u$ locally uniformly as $\eps\to 0$ in probability.

\end{theorem}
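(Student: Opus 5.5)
We will prove Theorem~\ref{thm:first singular} with a Cole--Hopf-type transform that converts \eqref{eq:singular} into an equation in the Young regime, which is then handled by the fixed-point result Theorem~\ref{prop:fixedpoint}. Since $\xi$ depends only on space, we first solve the elliptic problem $(1+\mathcal R)Y_\eps=\xi_\eps$; the massive resolvent avoids infrared problems on the unbounded group. By the Schauder estimates of Proposition~\ref{prop:schauder} and the Kolmogorov criterion Lemma~\ref{lem:kolmogorov}, $Y_\eps\to Y$ in $B^{\zeta+m-\kappa,w}_{\infty,\infty}$ for polynomially growing weights. This regularity is positive but less than $m/2$, because $\zeta\le -m/2$.

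Next we set $u_\eps=e^{Y_\eps}v_\eps$. Expanding $\mathcal R(e^{Y}v)$ with \eqref{eq:form rockland} and the Leibniz rule for the left-invariant $X^I$ gives
\begin{equ}
(\partial_t+\mathcal R)v_\eps = -\sum_{0<d(J)<m} c_J\, X^{J}v_\eps\, \Xi^{(J)}_\eps + v_\eps\big(Y_\eps - Q_\eps + C_\eps\big) .
\end{equ}
Here the $\Xi^{(J)}_\eps$ are polynomials in derivatives $X^{K}Y_\eps$ arising from $e^{-Y}\mathcal R(e^{Y}\,\cdot)$. The term $Q_\eps$ collects the products of at least two derivatives of $Y_\eps$ with total degree $m$ that multiply $v_\eps$ with no derivative. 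The term $-\mathcal RY_\eps+\xi_\eps = Y_\eps$ is smooth enough. The worst terms are the quadratic products $X^{K_1}Y_\eps X^{K_2}Y_\eps$ with $d(K_1)+d(K_2)=m$. They have regularity $2(\zeta+m)-m=2\zeta+m\le 0$, so they are ill-defined in the limit, and we renormalise them by choosing $C_\eps:=\E[Q_\eps(0)]$, which is constant by stationarity. Cubic and higher products have regularity at least $3(\zeta+m)-m>0$ exactly when $\zeta>-2m/3$. Products involving a single derivative direction of minimal degree $\delta_{\mathcal R}$ produce the constraints $-(m+\delta_{\mathcal R})/2$ and $-|\fraks|/2-\delta_{\mathcal R}$. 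The first arises from the regularity of the coefficients of $X^Jv$ in the Young regime, the second from the variance of the Wick products.

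The probabilistic step is to show that $:\!X^{K_1}Y X^{K_2}Y\!: = \lim_\eps (X^{K_1}Y_\eps X^{K_2}Y_\eps - \E[\cdot])$ exists in $B^{2\zeta+m-\kappa,w}_{p,\infty}$ and is independent of $\rho$. We do this by second-moment bounds on $\E|\langle :\!\cdot\!:,\phi^\lambda_x\rangle|^2$, which we express as integrals of $(X^{K_1}\otimes X^{K_2}\Gamma\ast C)^2$, where $\Gamma$ is the resolvent kernel. The assumed decay $|X^IC(x)|\lesssim 1+|x|^{2\zeta-d(I)}$ makes the square integrable near the origin and at infinity provided $4\zeta+2m>-|\fraks|$, which is the condition $\zeta>-(2m+|\fraks|)/4$. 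Gaussian hypercontractivity then upgrades these bounds to all moments, and Lemma~\ref{lem:kolmogorov} yields the limit. We expect this step to be the main obstacle: matching every renormalised product to a constraint in $\zeta_\star$ requires a careful bookkeeping of the Leibniz expansion of $\mathcal R$, and the role of $\delta_{\mathcal R}$ in particular has to be identified.

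With all coefficients converging in the appropriate weighted Besov spaces, the transformed equation is a PAM-type equation \eqref{eq:illustrate} with time-independent coefficients $\xi^{(I)}$. Each product $X^{J}v\cdot\Xi^{(J)}$ satisfies the Young condition of Theorem~\ref{prop:young}: $v$ gains $m$ derivatives and $\Xi^{(J)}$ has regularity at least $\zeta+m-d(J)$ or $2\zeta+m$, which is greater than $-m/2$ in the relevant cases. Theorem~\ref{prop:fixedpoint} then gives global well-posedness and continuity of the solution map in the coefficients for $v_0=e^{-Y_\eps}u_0$. For this initial datum we must check that multiplication by $e^{-Y_\eps}$, a positive-regularity function with at most exponential growth in our weights, preserves $B^{-m\nu,w}_{p,\infty}$. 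This follows from Theorem~\ref{prop:young} whenever $\zeta+m>m\nu$, i.e.\ $\nu<1+\zeta/m$, after adjusting the weight by $\exp(C|x|)$-type factors, which the admissible weight class of Definition~\ref{def:weight} absorbs. Finally, continuity of the solution map in the coefficients, together with convergence in probability of $(Y_\eps,\Xi_\eps)$, gives $v_\eps\to v$. Since $e^{Y_\eps}\to e^{Y}$ locally uniformly, $u_\eps\to u:=e^{Y}v$ locally uniformly on $(0,T]$, and $u$ is independent of $\rho$.
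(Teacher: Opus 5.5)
Your architecture is the paper's: massive resolvent $Y_\eps=\xi_\eps*G$, conjugation by $e^{Y_\eps}$, Wick renormalisation of the quadratic terms, hypercontractivity with Lemma~\ref{lem:kolmogorov}, then Theorem~\ref{prop:fixedpoint} with $W^{(K)}(t)=t\xi^{(K)}$ and Theorem~\ref{prop:young} for $e^{-Y_\eps}u_0$. The gap is in the one step where $\zeta>\zeta_\star$ is actually used, and several of your claims there are false.

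First, by Theorem~\ref{prop:young} a product has in general only the regularity of its worst factor, not the sum of the regularities. So cubic and higher terms are not of regularity $3\zeta+2m>0$: a block with $d(I_A)=m-2\delta_{\mathcal R}$ gives a factor of regularity $\zeta+2\delta_{\mathcal R}$, which is negative when $\delta_{\mathcal R}<m/4$. What is needed (Lemma~\ref{prop:only-scalar-counterterm}) is that any two blocks have positive total regularity, $2(\zeta+m)-d(I_{A_j})-d(I_{A_k})\geq 2\zeta+m+\delta_{\mathcal R}>0$. This is one source of $-(m+\delta_{\mathcal R})/2$. It also shows that the quadratic parts of $A_K$, $K\neq0$, need no renormalisation, and it gives $\mathcal Z^{(n)}_{\mathcal R}(Y)$, $n\geq 3$, regularity $\zeta+(n-1)\delta_{\mathcal R}$ and $A_K(Y)$ regularity $\zeta+d(K)$ (up to $\kappa$).

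Second, the Wick product $:\!X^{I_A}Y\,X^{I_B}Y\!:$ is in general not in $B^{2\zeta+m-\kappa}$. If $X^{I_B}Y$ has positive regularity (e.g.\ $d(I_B)=\delta_{\mathcal R}<m/3$), its covariance does not vanish on the diagonal, and the second moment against $\phi^\lambda_x$ is of order $\lambda^{2(\zeta+\delta_{\mathcal R})}$. The correct regularity is therefore $\tilde\alpha_0=\min\{2\zeta+m,\zeta+\delta_{\mathcal R}\}$, as in Claim~\ref{claim}. Your integrability computation only sees $4\zeta+2m>-|\fraks|$; with pointwise covariance bounds it is this diagonal contribution $|z|^{2(\zeta+\delta_{\mathcal R})}$ that produces $-|\fraks|/2-\delta_{\mathcal R}$.

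Third, and most importantly, ``every coefficient has regularity $>-m/2$'' is not the Young condition. The coefficient of $X^Jv$ is $-A_J(Y)$, whose linear part $\sum_I r_I\binom IJ X^{I-J}Y$ has regularity $\zeta+d(J)$, not $\zeta+m-d(J)$. Condition \eqref{conditions} is the cross condition $\min_J\tilde\alpha_J+\min_K(\tilde\alpha_K-d(K))>-m$. Since $\tilde\alpha_K-d(K)=\zeta$ for every $K\neq0$, the derivative terms force $v$ to have regularity above $-\zeta\geq m/2$, while the time-sewing caps it below $m+\tilde\alpha_0$. Hence one needs $\tilde\alpha_0+\zeta=\min\{3\zeta+m,2\zeta+\delta_{\mathcal R}\}>-m$. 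This is where $-2m/3$ and $-(m+\delta_{\mathcal R})/2$ really come from. Your criterion applied to $2\zeta+m$ would only give $\zeta>-3m/4$, and your $-2m/3$ rests on the false claim about cubic terms.

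Two smaller points. With the equation as stated, the constant enters the transformed equation as $-C_\eps v$, so the sign of your counterterm is flipped; the paper takes $C_\eps=-\E[\mathcal Z^{(2)}_{\mathcal R}(Y_\eps)(e)]$. Also, Theorem~\ref{prop:fixedpoint} only gives $v_\eps\to v$ in $B^{mr,w_t}_{p,\infty}$ with $mr<m$, which for small $p$ (e.g.\ Dirac data with $p=1$) need not embed into $C(\mathbb G)$. Locally uniform convergence therefore requires restarting at positive times and raising the integrability via Corollary~\ref{prop:embedding}, Item~\ref{Item_with_num_cond}, as the paper does.
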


\begin{remark}\label{rem:singular_comparision}
This extends \cite[Thm.~6.4]{MS25}, which is formulated on a compact
quotient of Carnot groups, to the full group and to rough initial data. The value of $\zeta_*$ is determined by
the chosen proof strategy which proceeds via a variant of the Cole-Hopf transform for Rockland operators, avoiding the use of regularity structures.
\end{remark}


\begin{remark}
A conceptual point stressed in both \cite{Tindel_Big, Tindel_ito} is the identification of `critical exponents' on the Heisenberg group.
Let us observe that these conditions are exactly those one obtains by adapting the usual power counting of singular SPDEs \cite{Hai14} by replacing dimension by the homogeneous dimension, the degree of differential operators by the homogeneous degree and the regularity by the notion of regularity associated to intrinsic distribution spaces. 

The work \cite{Tindel_Big} identifies exponents
 for the equation  \eqref{eq:illustrate} to belong to the Young regime.
Theorem~\ref{thm:PAM_YOung} and Remark~\ref{YoungRegime_heuristic} in particular identify the Young regime on general groups and for general Rockland operators.  
The work \cite{Tindel_ito} considers the case of white in time noise and introduces the analogue of Dalang's condition identified for the Heisenberg group in \cite{Tindel_ito}. This is exactly the threshold for the noise to be subcritical, \cite{Hai14}, when specialised to white in time noise. We refer to \cite{MS25} for a discussion of how going from $\mathbb{R}^d$ to a general group $\mathbb{G}$ affects the theory of regularity structures.


%
\end{remark}

\paragraph{Arguments for the SPDE applications}
Both Theorem~\ref{thm:PAM_YOung} as well as Theorem~\ref{thm:first singular} follow from Theorem~\ref{prop:fixedpoint} which shows well-posedness of parabolic Anderson models of the more general \eqref{eq:illustrate} Besides using the functional analytic tools established in the main part of the text, this is achieved by a mild sewing lemma, Lemma~\ref{lem:sewing},
in the spirit of \cite{Gub_tindel_sewing}, which however allows for time-varying target spaces and singular increments. This accommodates increments with values in weighted spaces where the weight depends on time which is crucial for spatially unbounded noises. We work with singular increments to allow for singular initial conditions.


While Theorem~\ref{thm:PAM_YOung} is simply a special case of Theorem~\ref{prop:fixedpoint} combined with the Kolmogorov criterion, Lemma~\ref{lem:kolmogorov}, the proof of Theorem~\ref{thm:first singular} employs a variant of the Cole-Hopf transform.
Instead of considering solutions $u_\eps$ directly, we set 
$
v_\varepsilon$ to be the solution of $\mathcal Rv_\varepsilon=\xi_\varepsilon-v_\varepsilon$ on $\mathbb{G}$
and consider 
 $w_\eps=e^{-v_\eps}u_\eps$. Then it turns out, that $w_\eps$ satisfies an equation of the form \eqref{eq:illustrate} where the
 $\xi^{(I)}$ are explicit in terms of Bell-polynomials of derivatives of $v_\eps$. If $\mathcal{R}$ were only the Laplacian on $\mathbb{R}^d$, or the sub-Laplacian on a Carnot group, it would be straightforward to see for which regularity regimes of the noise the transformed equation belongs to the Young regime, cf.\ \cite{hairer_labbe_15_simple}. In our setting the transformed equation is more complicated and can contain many terms which in principle need renormalisation. We observe, nevertheless, that when $\zeta_\star<-m/2$ there is still a non-empty regularity regime where only quadratic terms require renormalization.

%
%

\begin{remark}
Theorem~\ref{thm:first singular} can be seen as part of ongoing developments to understand singular SPDEs, beyond constant coefficient parabolic equations, see \cite{Hai14, gubinelli_imkeller_perkowski_15}  as well as the subsequent systematic treatment 
\cite{BHZ19, BCCH20, CH16},
 which combined with \cite{BB16, Sin23, BSS25} extends to variable-coefficient parabolic setting.
It would be interesting to see how a systematic treatment of hypo-equation would look like. A first step in this direction was recently carried out in \cite{MS25}.

Other geometries in which singular SPDEs have been considered are Riemannian manifolds \cite{BB16, DDD19, mouzard_22_weyl, HS23manifolds}, 
 random/oscillatory coefficients \cite{CS25, clozeau}/\cite{HSper, CX23, CFX26, HSper2} and fractals \cite{fractals_1, fractals_2}.
We also mention \cite{BGHZ22, CCHS22, CCHS22b, CSYYM} where the target space is geometric.
\end{remark}

\paragraph{Acknowledgements}
HS would like to thank Avi Mayorcas for a discussion on this work and gratefully acknowledges financial support from the Swiss National Science
Foundation (SNSF), grant numbers
225606
 and 239497.
ChatGPT was used to proofread the manuscript which resulted in several minor corrections.

\subsection{Background on Homogeneous Lie Groups}\label{sec:analysis_background}
In this section we collect some standard background on homogeneous Lie groups, cf.\ the standard works \cite{folland_stein_82_hardy,fischer_ruzhansky_18_quantisation}, as well as \cite{MS25} for the same notation as here.
%
The following definition provides the setting of our article.
\begin{definition}
	A dilation on a Lie algebra $\frakg$ is a group of algebra automorphisms $\{\dil_r\}_{r>0}$ of the form \mbox{$\dil_r X= \exp (\log r \cdot \fraks) X $} with $\fraks : \frakg\to \frakg$ being a diagonalisable linear operator, which we assume without loss of generality has $1$ as its smallest eigenvalue.
	
	A homogeneous Lie group $\mbG$ is a simply connected, connected Lie group where its Lie algebra $\frakg$ is endowed with a family of dilations $\{\dil_r\}_{r>0}$.\footnote{Recall that if $\mfg$ admits a family of dilations, it is nilpotent.} For $r>0$, we define the group automorphism $$x\mapsto r\cdot x:= \exp \circ \dil_r\circ \exp^{-1} x \ .$$ 
	
	A homogeneous norm on $\mbG$ is a continuous function $|\cdot|: \mbG\to [0,\infty)$ satisfying $|x|=|x^{-1}| $ and $|r\cdot x|=r |x| $ for all $x\in \mbG$, $r\in \RR_+$
	as well as 
	$$|x|=0 \qquad \text{ if and only if } \qquad x=e\ .$$
\end{definition}

All homogeneous norms are known to be mutually equivalent, see \cite[Prop.~ 1.5 \& Prop.~ 1.6]{folland_stein_82_hardy}, and by \cite{HebischSikora90} we from now on fix such a norm which furthermore is smooth away from the origin and satisfies the  triangle inequality
$$
|xy|\leq |x|+|y| \qquad \text{ for any }x,y \in \mbG \ .
$$

A homogeneous norm naturally induces a topology generated by the (norm) open sets which agrees with the topology of $\mbG$ as a Lie group, see \cite[Sec.~3.1.6]{fischer_ruzhansky_18_quantisation}. From now on, we will always assume that $\mbG$ is equipped with this topology and the Borel $\sigma$-algebra. 

Furthermore, we denote by $\{X_{j}\}_{j=1}^d\in \mathfrak{g}$ a basis of eigenvectors of $\fraks$ with eigenvalues $1=\fraks_1\leq \fraks_2 \leq ... \leq \fraks_d$ and such that 
\begin{equation}\label{eq:eigenvectors}
	\fraks X_j = \fraks_j X_j \ .
\end{equation}
Given a measurable subset $E\subset \mbG$ we write $|E|$ for its Haar measure which we assume to be normalized such that the set $B_1:=\{x\in \mbG\ : \  | x| \leq 1\}$ has measure $1$. In integrals we use the standard notation $\dd x$. 
%
%

We define $|\fraks|:=\text{trace} (\fraks)$ as the homogeneous dimension of $\mbG$, since  for any measurable subset $E\subset \mbG$ and $r> 0$, one has
\begin{equation*}
	|r\cdot E|= r^{|\fraks|} |E| \ . 
	\end{equation*}
We also define balls $B_r(x):= \{y\in \mbG\ : \  | x^{-1}y| < r\}$ of radius $r> 0$ centered at $x\in \mathbb{G}$.
 Note that due to the non-commutativity of $\mbG$, in general $|x^{-1}y|\neq |yx^{-1}|$. 

\subsubsection{Derivatives and Polynomials}
We identify $\frakg$ with the left-invariant vector fields $\frakg_L$ on $\mbG$ and write $\frakg_R$ for the right-invariant ones. 
We write $X_i$ for the basis elements as in \eqref{eq:eigenvectors} seen as elements of $\frakg_L$ and write $Y_i$ for the basis of $\frakg_R$
satisfying $Y_i |_e= X_i |_e$ . Thus we can write 
$$X_jf(y)= \partial_t f(y\exp(tX_j))|_{t=0}\qquad  
Y_jf(x)= \partial_t f(\exp(tX_j) x)|_{t=0}
$$
for any smooth function $f\in C^\infty(\mbG)$.

	%
	%
	A map $P: \mbG \to \mathbb{R}$ is called a polynomial if $P\circ \exp : \mfg \rightarrow \mbR$ is a polynomial on $\mfg$.\footnote{Recall that the space of polynomial functions on $\frakg$ is canonically isomorphic to $\bigoplus_{n} (\frakg^\ast)^{\otimes_s n}$ where $\otimes_s$ denotes the symmetric tensor product.} Let $\zeta_i$ be the basis dual to the basis $X_i$ of $\frakg$. We set $\eta_j= \zeta_j\circ \exp^{-1}$, which maps $\mbG$ to $\mathbb{R}$.
	Note that $\eta= (\eta_1,...,\eta_d)$ forms a global coordinate system \footnote{Occasionally we use the corresponding notation $\frac{\partial}{\partial\eta_i}$.} and furthermore any polynomial map on $\mbG$ can be written in terms of coefficients $a_I \in \mbR$ as 
	$$P= \sum_I a_I \eta^I$$
	with the sum running over a finite subset of $\NN^d$ and where for a multi-index $I=(i_1,...,i_d)\in \NN^d$ we write
	$\eta^I= \eta_1^{i_1}\cdot...\cdot \eta_d^{i_d}$. 
	Define $d(I)=\sum_j \fraks_j i_j$ and $|I| = \sum_j i_j$, we call $\max \{ d(I)\  : \ a_I \neq 0\}$ the homogeneous degree and $\max\{ |I| \  : \ a_I \neq 0\}$ the isotropic degree of $P$. For $a\in \mathbb{R}$, we denote by $\mathcal{P}_a$ the space of polynomials of homogeneous degree strictly less than $a$ and define $\triangle= \{d(I)\in \mathbb{R} \ : \ I\in \mathbb{N}^d \}$. \footnote{We point out a possibly counter-intuitive quirk of our definition; for $k\in \triangle$  the set $\mcP_k$ does \textit{not} contain polynomials of degree $k$ but only those of degree less than $k$. 
	}
	%
%
%
	%
%
		Recall that $\mathcal{P}_a$ is invariant under right and left-translations (which is in general not true if one replaces homogeneous degree by isotropic degree).
	%
	%
	%
	For a multi-index $I=(i_1,...,i_d)\in \mbN^d$ we introduce the notation $X^I = X_1^{i_1}... X_d^{i_d}$. Note that the order of the composition matters, we use the reverse convention $Y^I = Y_1^{i_d}... Y_d^{i_1}$ for the right invariant vector fields.
	\begin{definition}\label{def:taylor}
		For a smooth function $f:\mbG\to \mathbb{R}$, a point $x\in \mbG$ and $a\in (0,\infty)$, we define the left Taylor polynomial of homogeneous degree (less than) $a$ of $f$ at $x$ to be the unique polynomial $\opP^a_x[f]\in \mathcal{P}_a$ such that $X^I \opP^a_x[f](e)= X^I f(x)$ for all $I$ such that $d(I)<a$. 
		
		We also define the centred Taylor polynomial $\tilde{\opP}^a_x[f](y):= \opP^a_x[f](x^{-1}y)$, which alternatively is characterised by requiring $X^I \tilde{\opP}^a_x[f](x)= X^I f(x)$ for $d(I)<a$. 
	\end{definition}
	 
	%
	To see that the above definition makes sense, recall that for any $x\in \mathbb{G}$ the maps from $\mathcal{P}_a\to \mathbb{R}^{\dim \mathcal{P}_a}$ 
		\begin{enumerate}
			\item $P \mapsto \left\{ \left(\frac{\partial}{ \partial_\eta}\right)^I P(x)\right\}_{d(I)< a}$ ,
			\item $P \mapsto \left\{ X^I P(x)\right\}_{d(I)< a}$ ,
			\item $P \mapsto \left\{ Y^I P(x)\right\}_{d(I)< a}$ ,
		\end{enumerate}
		are isomorphisms, cf. \cite[Prop.~1.30]{folland_stein_82_hardy}. It furthermore holds, see \cite[Prop.~1.29]{folland_stein_82_hardy}, that for every $i\in \{1,...,d\}$ there exist homogeneous polynomials $p_{i,j}$, $q_{i,j}$ of homogeneous degree $\fraks_j-\fraks_i$ such that 
		\begin{equ}\label{eq:re_expand_basis}
		X_i= \sum_{j} p_{i,j} Y_j, \qquad Y_i= \sum_{j} q_{i,j} X_j \ .
		\end{equ}
	
	%
	
	%
	%
	%
	
	%
	The following is Taylor's theorem from \cite[Thm.~2.13]{MS25}.
	\begin{theorem}\label{th:taylor}
		For each $a\geq 0$ and every $f\in C^\infty(\mbG)$ it holds that
		$$ f(xy)-\opP^a_x[f](y)= \sum_{|I|\leq [a]+1, d(I)\geq a} \int_{\mbG} X^{I}f(xz) Q^I(y, \dd z)  \ ,$$
		where for each multi-index $I$ and $y\in \mbG$ the measure $Q^I(y, \,\cdot\,)$ is supported on $B_{ \beta^{[a]+1} |y|}(e)$ for some $\beta>0$ depending only on $\mbG$ and satisfies $\int_{\mbG} |Q^I(y, \dd z)| \lesssim |y|^{d(I)}$.
	\end{theorem}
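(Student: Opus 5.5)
Since $f(xy)$ and $\opP^a_x[f](y)$ are both smooth in $y$, I would work along a curve from $e$ to $y$ made of flows of the basis fields $X_j$. By \cite[Lem.~1.40]{folland_stein_82_hardy} there are $\beta>0$ and a fixed $N$ such that every $y$ factors as $y=\exp(t_1X_{j_1})\cdots\exp(t_NX_{j_N})$ with $|t_k|^{1/\fraks_{j_k}}\le \beta|y|$. By dilation invariance this reduces to $|y|=1$, using $X_j(f\circ \delta_r)=r^{\fraks_j}(Xf)\circ\delta_r$. The partial products $y_k$ then stay in $B_{\beta|y|}$. I would prove the formula by induction on the number of factors and on the degree $a$.

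\textbf{Step 1: one-parameter Taylor.} Fix $x$ and set $g(y):=f(xy)-\opP^a_x[f](y)$. By the definition of $\opP^a_x$ together with left invariance, $X^Ig(e)=0$ for all $d(I)<a$. Along each flow $t\mapsto g(z\exp(tX_j))$, I would apply the classical one-dimensional Taylor formula with integral remainder, Taylor-expanding in $t$ up to order $n_j$, where $n_j$ is the smallest integer with $n_j\fraks_j\ge a$ minus the already accumulated degree. Chaining the $N$ factors telescopically produces terms of the form $t$-monomials times $X^{J}g$ evaluated at intermediate points $y_k$, plus integral remainders. Each remainder is $\int X_j^{n}(X^{J}g)(y_{k}\exp(sX_j))\,\dd s$ weighted by $(t-s)^{n-1}$ and has total homogeneous degree $\ge a$.

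\textbf{Step 2: re-expansion.} Intermediate terms $X^Jg(y_k)$ with $d(J)<a$ are not yet centred at $e$. I would expand them again back toward $e$ along the earlier flows, iterating finitely often (at most $[a]+1$ times). Every derivative that is not evaluated at $e$ must then carry a total degree $\ge a$. Terms evaluated at $e$ vanish by $X^Ig(e)=0$. The resulting products $X_{j_1}\cdots X_{j_\ell}$ are in arbitrary order. Using the commutation relations (the Lie algebra is graded by $\fraks$, so $[X_i,X_j]$ is a combination of $X_k$ with $\fraks_k=\fraks_i+\fraks_j$), I would rewrite them in the ordered PBW form $X^I$. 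This preserves homogeneous degree and does not increase $|I|$. Derivatives of the polynomial $\opP^a_x[f]$ of degree $\ge a$ vanish, so only $X^If(xz)$ survives. The isotropic count $|I|\le[a]+1$ comes from stopping each expansion at the first step where the degree crosses $a$; since every $\fraks_j\ge1$, this happens after at most $[a]+1$ derivatives.

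\textbf{Step 3: measures.} Each remainder is an integral over the $s$-variables of $X^If(x\cdot\gamma(s))$, where $\gamma(s)$ is a product of at most $N([a]+1)$ flow pieces. Pushing Lebesgue measure forward by $\gamma$ defines $Q^I(y,\cdot)$. Its total variation is bounded by $\prod|t_k|^{(\text{powers})}\lesssim|y|^{d(I)}$, by the bound on the $t_k$. Its support lies in $B_{\beta^{[a]+1}|y|}$ by the triangle inequality, since each round of re-expansion travels at most $\beta|y|$ again.

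The main obstacle is Step 2. The bookkeeping has to show that after reordering via commutators one lands exactly on terms with $d(I)\ge a$ and $|I|\le [a]+1$, without leftover lower-degree terms. I would handle this by an induction on $a$ over the discrete set $\triangle$: assume the formula for the largest $a'<a$ in $\triangle$, then expand the degree-$a'$ remainder terms one further step.
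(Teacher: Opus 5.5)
The paper does not prove this theorem; it quotes it from \cite[Thm.~2.13]{MS25}. That result is the integral-remainder form of the Taylor inequality \cite[Thm.~1.37]{folland_stein_82_hardy}, and its standard proof is organised differently from yours.

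\textbf{The standard route.} One proves only a first-order formula along a factorisation $y=\exp(t_1X_{j_1})\cdots\exp(t_NX_{j_N})$. This gives $g(y)-g(e)=\sum_j\int X_jg(z)\,\mu_j(y,\dd z)$, with $\int|\mu_j(y,\dd z)|\lesssim|y|^{\fraks_j}$ and $\mu_j(y,\cdot)$ supported in $B_{\beta|y|}$. One then inducts on $a$. For $g=f(x\,\cdot\,)-\opP^a_x[f]$ and $\fraks_j<a$, the function $X_jg$ has vanishing Taylor polynomial of degree $a-\fraks_j$ at $e$, so the hypothesis at degree $a-\fraks_j$ applies to it. One composes the measures and reorders $X^IX_j$. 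Since $\fraks_j\geq1$, the recursion depth is at most $[a]+1$. This is exactly where $|I|\leq[a]+1$ and the radius $\beta^{[a]+1}|y|$ come from, with no further bookkeeping.

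\textbf{Your route.} You use higher-order one-dimensional Taylor expansions along one fixed factorisation, re-expand node terms along the earlier flows of the same path, and then reorder via commutators. This is also correct. Node terms of degree $<a$ are pushed back to $e$, where they vanish after reordering. Stopping at the first degree-crossing gives at most $\lceil a\rceil\leq[a]+1$ derivatives. The $Q^I$ are explicit pushforwards of polynomial densities in $s$. Your route even gives more: all measures live on a single path from $e$ to $y$, hence in $B_{C|y|}$ with $C$ independent of $a$. The price is combinatorics that the Folland--Stein induction absorbs automatically.

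\textbf{A caution on your last paragraph.} Suppose you run the induction over consecutive points $a'<a$ of $\triangle$, using the theorem at $a'$ as a black box. Expanding a degree-$a'$ term $X^Ig(z)$ then needs a fresh mean value step at some $z\in B_{\beta^{[a']+1}|y|}$, so you only obtain radius $\beta^{[a']+2}|y|$.
\begin{itemize}
\item This exceeds $\beta^{[a]+1}|y|$ whenever $[a']=[a]$, already for $\mbG=\RR^d$, $a'=1$, $a\in(1,2)$.
\item The number of points of $\triangle$ per unit interval can be unbounded, e.g.\ for $\fraks_1=1$, $\fraks_2=\sqrt{2}$. So the loss cannot in general be absorbed into a $\beta$ independent of $a$.
\end{itemize}
To fix this, either step down by $\fraks_j\geq1$ as above, or carry the stronger hypothesis that all measures are supported on the fixed path. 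The latter is what your Step~2 does anyway.

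\textbf{A minor point.} Take the factorisation from exponential coordinates of the second kind in the graded basis, ordered by increasing $\fraks_j$. These are global and homogeneous on any homogeneous group. They also make $y\mapsto Q^I(y,\cdot)$ measurable, which the paper tacitly uses when it integrates against $\phi^\lambda(y)\,\dd y$.
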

	\begin{remark}\label{rem:taytay} Note that the common version of Taylor's Theorem on homogeneous Lie groups, cf.\ \cite[Thm.~1.37]{folland_stein_82_hardy}
		\begin{equation}\label{eq:common_form_taylor}
			|f(xy)-\opP^a_x[f](y)|\lesssim_a \sum_{|I|\leq [a]+1, d(I)\geq a} |y|^{d(I)} \sup_{|z|\leq \beta^{[a]+1} |y|} |X^{I}f(xz) | \ 
		\end{equation}
		would not suffice for many arguments that follow
		and the integral form of the remainder will be used at several places.
		\end{remark}
		
	\subsubsection{Distributions, Convolution and Differential Operators}\label{subsec:distributions_convolutions}
	We define $\mcD(\mbG):= C^\infty_c(\mbG)$ to be the space of compactly supported smooth functions on $\mbG$. The space of distributions on $\mbG$ is given by the dual $\mcD'(\mbG)$ of $\mcD(\mbG)$ with respect to the usual topology and for $\xi \in \mcD'(\mbG)$ and $\phi \in \mcD(\mbG)$ we either write $\langle \xi,\phi\rangle$ or $\xi(\phi)$ for the canonical pairing.
	Given $r \in \mbR$ we introduce the following useful space of test functions
	$$\mfB^{\lambda}_{ r}:= \left\{ \phi\in C^\infty_c (B_\lambda)  \ : \ |X^I \phi | \leq \frac{1}{\lambda^{|\mfs|+d(I)}} \  \forall I : d(I)\leq (r+\max_i\fraks_i)\vee 0  \right\} \ $$
			and write $\mfB_r:=\mfB_r^1$.
	%
	%
	%
	Given $\phi \in \mcD(\mbG)$ we extensively use the following notation
	$$\phi^\lambda_x(z):=\frac{1}{\lambda^{|\mfs|}}\phi \left(\frac{1}{\lambda}\cdot  (x^{-1}z)\right).$$
	We denote by $\|\cdot \|_{L^p}$ the standard $L^p$-norm with respect to the Haar measure, and note that
	%
	%
	by left and right translation invariance of the Haar measure, for $f\in L^1(\mbG)$ 
	$$\int_{\mbG} f(yx)\dd y = \int_{\mbG} f(xy)\dd y = \int_{\mbG}f(y)\dd y = \int_{\mbG}f(y^{-1})\dd y \ ,$$
	cf.\ the discussion after \cite[Thm.~1.1.1]{fischer_ruzhansky_18_quantisation}. 
	%
	%
	One defines the convolution of two functions $\phi, \psi: \mbG\to \mathbb{R}$ as
	\begin{equation}\label{eq:def_convolution}
\psi*\phi(x)= \int \psi(y) \phi(y^{-1}x) dy = \int \psi(xy^{-1}) \phi(y) dy\ .
	\end{equation}
	Note that in general $\psi* \phi(x)\neq \phi * \psi(x)$ but still many of the usual inequalities hold, in particular the Young inequality, cf. \cite[Prop.~1.18]{folland_stein_82_hardy}. One can write
	$$\psi*\phi(x)= \int \psi(y) \phi_y(x) dy $$
	We extend convolution to certain generalised functions, i.e. elements of $\mcD'(\mbG)$, as usual.
	From now on we will use the notation $\tilde{\phi}(z):= \phi(z^{-1})$, one notes that the following identities hold
	\begin{align}
		\langle f, g*\phi \rangle = \langle \tilde{g} * &f, \phi \rangle = \langle   f * \tilde{\phi}, g \rangle \label{eq:throwing_convolution} \\
		\widetilde{f*g} &=  \tilde{g}*\tilde{f} \label{eq:convolution_reorder}\\
		\psi^\lambda*\phi^\lambda = (\psi*\phi)^\lambda\quad &\text{and}\quad (\psi \ast \phi)_x = \psi_x \ast \phi. \notag
	\end{align}
 Since for any right invariant vector field $Y$ and left-invariant vector field $X$ such that $X|_e = Y|_e$ it holds that $\langle X f,g\rangle = -\langle f, X g\rangle $ for $f,\,g \in C^\infty_c(\mbG)$, it follows from the definition of the convolution that
$$ X^I(\psi*\phi) = \psi*( X^I \phi) , \qquad 
		Y^I(\psi*\phi) = (Y^I\psi)* \phi, \qquad 
(X^I\psi)*\phi= \psi*Y^I \phi \ .$$

An important class of differential operators are Rockland operators. 
The following definition is the simplest to state, we refer to \cite[Sec.~4]{fischer_ruzhansky_18_quantisation} for a detailed discussion and the definition in terms of the Rockland condition.
\begin{definition}
We shall call a left-invariant operator $\mathcal{R}$ a Rockland operator if 
it is hypo-elliptic,
and is of homogeneous of order $m$.
It is called positive
if it is formally self-adjoint and
$$
\langle \mathcal Rf,f\rangle_{L^2}\geq0
\qquad\text{for every }f\in C_c^\infty(\mathbb G).
$$
\end{definition}
Let us note that the choice of working with left-invariant operators and requiring homogeneity is purely a matter of convention. The construction and properties of fundamental solutions of such operators are well understood, and largely analogous to those of translations invariant elliptic operators on $\mathbb{R}^d$, cf.\ 
\cite{folland_75_subelliptic, Nice_noteDHZ94, Elst_Robinson}.
\begin{remark}
We recall that sub-Laplacians on Carnot groups are examples with $m=2$. Operators of the form
$$
\mathcal{R}=\sum_{i=1}^d (-1)^{\sigma/\fraks_i} c_i X^{2\sigma/\fraks_i}_i\ ,
$$
where $\sigma\in \mathbb{N}$ is a common multiple of $\{\fraks_i\}_{i=1}^d\in \mathbb{N}$ and $c_i>0$, are examples with $m=2\sigma$.
\end{remark}

	\section{Besov spaces}\label{sec:2}
	We shall work with the following class of weights.
\begin{definition}\label{def:weight}
We say that a function $w:\mathbb{G}\to (0,\infty)$ is a weight if for every $r>0$ there exists $C>0$ for which
\begin{equ}\label{eq:weight}
C^{-1} \leq  \frac{w(x)}{w(y)} \leq C\ 
\end{equ}
uniformly over $x,y\in \mathbb{G}$ satisfying $|xy^{-1}|\wedge |y^{-1}x| \leq r$.
\end{definition}
Note that for any two weights, their products and sums are weights as well.
Recalling that we fixed a homogeneous norm $|\cdot|$ for which 
triangle inequality holds, for $a,\ell\in \mathbb{R}$ by the following are well defined weights
\begin{equ}\label{eq:weights_examle}
p_a(x):=(1+|x|)^a, \qquad e_\ell(x):=\exp \big(\ell(1+|x|)\big)\ .
\end{equ}
\begin{remark}
For later use, let us record the following property of these weights
\begin{equ}\label{weights_trade}
\sup_{x\in\mathbb G}
\frac{e_{\ell+s}(x)p_a(x)}{e_{\ell+t}(x)}
=
\sup_{x\in\mathbb G}(1+|x|)^a
e^{-(t-s)(1+|x|)}
\lesssim_a(t-s)^{-a} \ ,
\end{equ}
which holds uniformly over $s<t$ and $a>0$ in bounded sets.
\end{remark}


		\begin{definition}\label{def:Besov}
		For $p,q\in [1,\infty],$ $\alpha \in \mbR$ we define\footnote{With the usual convention of interpreting the edge cases $p=\infty$ or $q=\infty$ as (essential) suprema.} the space $B^{\alpha,w}_{p,q}(\mbG)$ to consist of those $f\in \mcD'(\mbG)$ such that
			%
			
			%
			%
			\begin{equation}\label{eq:Besov_norm_1}
				\|f\|_{B_{pq}^{\alpha, w}}:=	\inf_{P}\Bigg[\Big(\int_{0}^1\Big\|\sup_{\phi \in \mfB_{-\alpha } }  \frac{ |\langle f- \tilde{P}_x,\phi_x^\lambda\rangle|}{w(x)\lambda^{\alpha}}\Big\|^q_{L^p}  \frac{d\lambda}{\lambda}				\Big)^{1/q} + \Big\|\frac{|{P}_x|_{\mathcal{P}_\alpha}}{w(x)} \Big\|_{L^p}\Bigg] < +\infty \ ,
			\end{equation}	
where the infimum runs over measurable maps $P:\mbG\to \mathcal{P}_\alpha,\ x\mapsto {P}_x$, we write	
			$\tilde{P}_x:= P_x (x^{-1}\cdot)$, and $|\, \cdot\, |_{\mathcal{P}_\alpha}$ denotes any norm on the finite dimensional vector space $\mathcal{P}_\alpha$.
			%
	\end{definition}
	\begin{remark}
	For $\alpha\leq 0$ recall that $\mathcal{P}_\alpha= \{0\}$ and therefore \eqref{eq:Besov_norm_1} actually simplifies to 
	\begin{equation}\label{eq:Negative_Holder_Def}
				\|f\|_{B_{pq}^{\alpha, w}}=	\Big(\int_{0}^1 \Big\| \sup_{\phi \in \mfB_{ -\alpha }} \frac{|\langle f,\phi^\lambda_x\rangle|}{w(x)\lambda^\alpha} \Big\|^q_{L^p} \frac{d\lambda}{\lambda}			\Big)^{1/q}	
				< +\infty \ .
			\end{equation}
	\end{remark}
	\begin{remark}\label{rm:changing_scales}
	Note that replacing $\lambda$ by $C\lambda$ for some $C>0$ in \eqref{eq:Besov_norm_1} gives equivalent norms. (The argument for $C<1$ is straightforward, while for $C>1$ one writes the test function $\phi^{C\lambda}$ as a sum of a fixed number of other functions in $\mathfrak{B}^\lambda$, cf. \cite[Rem.~4.10]{HS23manifolds}).
	
	Similarly, one straightforwardly sees that for any $C>0$
	\begin{equation}\label{eq:Besov_norm_1'}
			\inf_{P}	\Bigg[\Big(\int_{0}^C\Big\|\sup_{\phi \in \mfB_{-\alpha  }}  \frac{ |\langle f- \tilde{P}_x,\phi_x^\lambda\rangle|}{w(x)\lambda^{\alpha}}\Big\|^q_{L^p}  \frac{d\lambda}{\lambda}				\Big)^{1/q} + \Big\|\frac{|{P}_x|_{\mathcal{P}_\alpha}}{w(x)} \Big\|_{L^p}\Bigg] \ ,
			\end{equation}	
	as well as for any $\mfr>1$, $n_0\in \mathbb{Z}$ 
	\begin{equation}\label{eq:Besov_norm_1''}
\inf_{P}	\Bigg[\Big(\sum_{n=n_0}^\infty \Big\|\sup_{\phi \in \mfB_{-\alpha } }  \frac{ |\langle f- \tilde{P}_x,\phi_x^{\mfr^{-n}}\rangle|}{w(x)\mfr^{-n\alpha}}\Big\|^q_{L^p} 				\Big)^{1/q} + \Big\|\frac{|{P}_x|_{\mathcal{P}_\alpha}}{w(x)} \Big\|_{L^p}\Bigg] \ ,
	\end{equation}	
are equivalent norms to \eqref{eq:Besov_norm_1}.
	\end{remark}


\begin{remark}
Note that our conventions are to measure regularity using
left-invariant vector fields and the displacement $x^{-1}y$. Replacing
these by right-invariant vector fields and $yx^{-1}$ gives 
corresponding right Besov spaces. The inversion map
$f(x)\mapsto f(x^{-1})$ identifies the two conventions, with the
weight $w$ replaced by $x\mapsto w(x^{-1})$.
\end{remark}

	\begin{lemma}\label{prop:holder_inclusion_compact}
			 If $\alpha>0$,
			then for any distribution $f \in B^{\alpha,w}_{p,q}(\mbG)$ it holds that $X^{I}f/w\in L^{p}$ for every $d(I)<\alpha$. Furthermore, there exists a map $P: \mbG\to \mathcal{P}_\alpha,\ x\mapsto {P}_x$ for which the infimum in \eqref{eq:Besov_norm_1} is obtained, and any such map agrees with the Taylor polynomial of $f$ a.e.
	\end{lemma}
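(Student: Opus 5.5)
Fix $f\in B^{\alpha,w}_{p,q}$ with $\alpha>0$ and a measurable $P$ for which the bracket in \eqref{eq:Besov_norm_1} is finite. The plan is to show that $x\mapsto P_x$ is forced: its coefficients must be (a.e.) the derivatives $X^If(x)$, $d(I)<\alpha$, understood in the distributional sense. Once this is established, $X^If/w\in L^p$ follows from the term $\||P_x|_{\mathcal P_\alpha}/w\|_{L^p}$, since all norms on $\mathcal P_\alpha$ are equivalent and the map $P\mapsto\{X^IP(e)\}_{d(I)<\alpha}$ is an isomorphism. Moreover, the infimum then runs over a single admissible map up to null sets, so it is attained by $P_x=\opP^\alpha_x[f]$ (using Definition~\ref{def:taylor} with distributional derivatives).

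\textbf{Step 1: consistency of $P$ across nearby points.} The first step is to compare $\tilde P_x$ and $\tilde P_y$ for $|x^{-1}y|\lesssim\lambda$. Since $\langle f-\tilde P_x,\phi^\lambda_x\rangle$ is small in the averaged sense of the norm, testing $\tilde P_x-\tilde P_y$ against $\phi^\lambda_x$ for a fixed $\phi$ with nonvanishing moments gives control of the coefficients of $\tilde P_x-\tilde P_y$ at scale $\lambda$. Here one rewrites $\phi^\lambda_x=\psi^{C\lambda}_y$ for a suitable $\psi$, as in Remark~\ref{rm:changing_scales}. The resulting bound is of order $\lambda^\alpha\,w$, in an $L^p(dx)$-averaged form over $y\in B_\lambda(x)$.

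Because $\alpha>d(I)$ for all relevant $I$, telescoping over dyadic scales $\lambda=2^{-n}$ should show the following: for a.e.\ $x$, the mollified quantities $\langle f,(X^I\phi)^{\lambda}_x\rangle$ (suitably normalised, via integration by parts) converge as $\lambda\to0$ in $L^p_{\mathrm{loc}}$. The limits are exactly the coefficients $X^I\tilde P_x(x)=X^IP_x(e)$. I would organise this via the equivalent discrete norm \eqref{eq:Besov_norm_1''} and Minkowski's inequality, so that the $\ell^q$-sum becomes summable in $n$. Summability uses $\alpha-d(I)>0$ together with Hölder's inequality in $n$ when $q>1$.

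\textbf{Step 2: identification with distributional derivatives.} Let $g_I(x):=X^IP_x(e)$, which lies in $L^p_{\mathrm{loc}}$ (weighted by $w^{-1}$) by Step 1. For a test function $\chi\in\mathcal D$, write $\langle X^If,\chi\rangle=\lim_{\lambda\to0}\langle X^If,\chi*\tilde\rho^\lambda\rangle$ with a mollifier $\rho$, and expand via the convolution identities \eqref{eq:throwing_convolution}. This expresses the pairing as $\int\chi(x)\langle f,(X^I\ldots)\rho^\lambda_x\rangle\,dx$, up to signs and the reordering rules of Section~\ref{subsec:distributions_convolutions}. Replacing $f$ by $\tilde P_x$ costs an error that vanishes by Step 1, since $\lambda^{\alpha-d(I)}\to0$. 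The main term tends to $\int\chi g_I$, so $X^If=g_I$. Uniqueness of the Taylor polynomial then gives $P_x=\opP^\alpha_x[f]$ a.e., and consequently every minimising (indeed every admissible) $P$ agrees with this one.

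\textbf{Main obstacle.} The hardest part will be Step 1 in the $L^p$, $q$-integrated (rather than pointwise Hölder) setting. The sup over test functions sits inside an $L^p(dx)$ norm, so the comparison between $P_x$ and $P_y$ is only available on average, and the scales interact with the weight. I would handle the weight by Definition~\ref{def:weight}, which makes $w(x)\sim w(y)$ for $|x^{-1}y|\le1$. For the averaging, I would integrate the comparison over $y\in B_\lambda(x)$ before taking $L^p$ norms, using translation invariance of Haar measure.
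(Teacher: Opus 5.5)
Your proof is correct, and Step~2 is exactly the paper's argument. The paper fixes $\phi$ with $\int\phi>0$ and splits $\langle X^If-X^IP_x(e),\phi^\lambda_x\rangle$ into two pieces. The first is $\langle X^I(f-\tilde P_x),\phi^\lambda_x\rangle$, which after integration by parts is $\lambda^{\alpha-d(I)}$ times the scale-$\lambda$ Besov integrand. The second is $\langle X^IP_x-X^IP_x(e),\phi^\lambda\rangle=O(\lambda|P_x|_{\mathcal P_\alpha})$. It then lets $\lambda\to0$ in the norm $\|\cdot/w\|_{L^p}$; pairing with $\chi$ and integrating in $x$, as you do, is equivalent.

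Step~1 and your ``main obstacle'' can be deleted, since no comparison of $P_x$ with $P_y$ is ever needed. First, $g_I/w\in L^p$ comes for free from the term $\||P_x|_{\mathcal P_\alpha}/w\|_{L^p}$, so $g_I\in L^1_{\mathrm{loc}}$ because weights are locally bounded. Second, the replacement error in Step~2 is controlled directly by the definition of the norm. The only care needed there is that $\lambda^{\alpha-d(I)}\to0$ alone does not make the error vanish, because the integrand is merely in $L^q(d\lambda/\lambda)$. So you pass to a subsequence $\lambda_k\to0$ along which the integrand stays bounded, as the paper does.
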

	%
\begin{proof}
Fix $\phi\in \mfB_{- \alpha}$ such that $\int \phi>0.$ The lemma follows from the inequality
\begin{equ}\label{eq:technical_taylor_id}
\Big\|\Big\langle \frac{X^I f- X^I P_x(e)}{w(x)}, \phi^\lambda_x\Big\rangle\Big\|_{L^p} \lesssim \Big\|\frac{ |\langle f-  \tilde{P}_x, X^I \phi^\lambda_x\rangle|}{w(x)}\Big\|_{L^p}
+\Big\| \frac{|{P}_x|_{\mathcal{P}_\alpha}}{w(x)}\Big\|_{L^p} \lambda\ .
\end{equ}
Indeed both terms vanish (at least) along a subsequence $\lambda\to 0$ and therefore 
 $X^{I}f= X^I P_x(e)$ a.e.\ for $d(I)<\alpha$,
To see \eqref{eq:technical_taylor_id} write
$$\langle X^I f- X^I P_x(e), \phi^\lambda_x\rangle=  \langle X^I f- X^I \tilde{P}_x, \phi^\lambda_x\rangle +\langle X^I \tilde{P}_x- X^I \tilde{P}_x(x), \phi^\lambda_x\rangle \ . 
$$
Integrating this and noting that
$$\Big\|\Big\langle\frac{X^I \tilde{P}_x- X^I \tilde{P}_x(x)}{w(x)}, \phi^\lambda_x\Big\rangle\Big\|_{L^p}\lesssim \Big\| \frac{|{P}_x|_{\mathcal{P}_\alpha}}{w(x)}\Big\|_{L^p} \lambda $$
since $
 |\langle X^I {P}_x- X^I {P}_x(e), \phi^\lambda\rangle| \lesssim | {P}_x|_{\mathcal{P}_\alpha} \lambda
$
completes the proof.
%

\end{proof}	
%
%
%
%
%
	\begin{lemma}
For any $\alpha\in \mathbb{R}$ and any multi-index $I$, the linear map $X^I:\  B^{\alpha,w}_{pq}(\mbG)\to B^{\alpha-d(I),w}_{pq} (\mbG), \  f\mapsto X^I f$ is bounded.
\end{lemma}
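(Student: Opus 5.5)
The plan is to move the derivative onto the test function by duality and then rescale.

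\textbf{Duality and scaling.} Since the Haar measure is bi-invariant and left-invariant vector fields are divergence free, for $f\in\mcD'(\mbG)$ and $\psi\in\mcD(\mbG)$ we have $\langle X^I f,\psi\rangle=(-1)^{|I|}\langle f,X^{I^\ast}\psi\rangle$. Here $X^{I^\ast}$ is the product of the same vector fields in reversed order, and $d(I^\ast)=d(I)$. Left-invariance and homogeneity give the scaling identity
\[
X^{I^\ast}(\phi^\lambda_x)=\lambda^{-d(I)}\,(X^{I^\ast}\phi)^\lambda_x .
\]

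\textbf{Choice of polynomial.} Set $\beta=\alpha-d(I)$ and take a near-optimal $P:\mbG\to\mathcal P_\alpha$ for $f$ in \eqref{eq:Besov_norm_1}. For the derivative, define $Q_x\in\mathcal P_\beta$ by $\tilde Q_x:=X^I\tilde P_x$. Left-invariance of $X^I$ gives $\tilde Q_x(y)=(X^IP_x)(x^{-1}y)$. Since $X^I$ lowers homogeneous degree by $d(I)$, $X^IP_x$ lies in $\mathcal P_\beta$, and it vanishes when $\beta\le0$, consistent with $\mathcal P_\beta=\{0\}$. Because $X^I$ is linear on the finite-dimensional space $\mathcal P_\alpha$, we get $|Q_x|_{\mathcal P_\beta}\lesssim|P_x|_{\mathcal P_\alpha}$, so the polynomial term of the norm is controlled. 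For the main term, with $\phi\in\mfB_{-\beta}$,
\[
\langle X^If-\tilde Q_x,\phi^\lambda_x\rangle=\pm\lambda^{-d(I)}\langle f-\tilde P_x,(X^{I^\ast}\phi)^\lambda_x\rangle .
\]
Dividing by $w(x)\lambda^{\beta}$ produces exactly the quantity in \eqref{eq:Besov_norm_1} for $f$ at regularity $\alpha$, tested against $X^{I^\ast}\phi$. Hence the lemma follows once
\[
\sup_{\phi\in\mfB_{-\beta}}\big|\langle f-\tilde P_x,(X^{I^\ast}\phi)^\lambda_x\rangle\big|\lesssim\sup_{\psi\in\mfB_{-\alpha}}\big|\langle f-\tilde P_x,\psi^\lambda_x\rangle\big|
\]
holds, or at least holds after integration in $L^p$ and $L^q(d\lambda/\lambda)$.

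\textbf{Main obstacle: regularity of the test functions.} A function $\phi\in\mfB_{-\beta}$ only has derivative bounds up to order $(d(I)-\alpha+\max_i\fraks_i)\vee0$. Thus $X^{I^\ast}\phi$ has bounds up to order $(-\alpha+\max_i\fraks_i)\vee0$, which is what $\mfB_{-\alpha}$ requires, but only when $d(I)-\alpha+\max_i\fraks_i\ge0$. When $\alpha$ is large relative to $d(I)$, $\phi$ is merely bounded and $X^{I^\ast}\phi$ need not exist classically.

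\textbf{Handling the obstacle.} I would first show that in \eqref{eq:Besov_norm_1} one may restrict the supremum to a smoother class $\mfB_{r}$ with $r$ large, at the cost of an equivalent norm. This is done with a telescoping mollification. Fix $\rho\in\mcD(B_1)$ with $\int\rho=1$ that annihilates the relevant polynomials, i.e.\ $\langle P,\rho\rangle=P(e)$ for $P\in\mathcal P_\alpha$. Write
\[
\phi^\lambda_x=\phi^\lambda_x*\rho^{\lambda}+\sum_{n\ge0}\big(\phi^\lambda_x*\rho^{2^{-n-1}\lambda}-\phi^\lambda_x*\rho^{2^{-n}\lambda}\big).
\]
The first term is a smooth test function at scale $\sim\lambda$. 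Each difference is an average over $y\in B_\lambda(x)$, weighted by $\phi^\lambda_x(y)$, of functions $\rho^{2^{-n-1}\lambda}_y-\rho^{2^{-n}\lambda}_y$. These are smooth at scale $2^{-n}\lambda$ and annihilate $\mathcal P_\alpha$.

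Pairing with $f-\tilde P_x$, I would recentre the polynomial from $x$ to $y$ using the right-translation invariance of $\mathcal P_\alpha$. The term at scale $2^{-n}\lambda$ is then bounded by $(2^{-n}\lambda)^\alpha$ times the smooth-class quantity at nearby points. Since $\alpha>0$ in this case, the series sums geometrically. The weight ratio $w(y)/w(x)$ is bounded by Definition~\ref{def:weight}, and averaging over $y\in B_\lambda(x)$ is harmless in $L^p$ by Young's inequality. With this equivalence in hand, $X^{I^\ast}$ maps $\mfB_{r}$ into a bounded multiple of $\mfB_{-\alpha}$ for $r$ large, and the scaling argument above closes the proof. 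I expect carrying out this test-function-class equivalence to be the only genuine work; the remaining steps are bookkeeping.
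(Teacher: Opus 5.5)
Your strategy is the paper's: move $X^I$ onto the test function by duality, rescale, take $Q_x=X^IP_x$, and pass to a more regular test class where the derivative costs too much. Your telescoping mollification is an explicit version of the paper's step ``bound over more regular test functions, then upgrade via Lemma~\ref{prop:holder_inclusion_compact}''. It is sound when applied to $X^If$ at a positive target regularity $\beta=\alpha-d(I)$.

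\textbf{The gap: your threshold for the direct step is wrong.} For $I\neq 0$ and $\phi\in\mfB_{-\beta}$, the derivatives of $X^{I^\ast}\phi$ are controlled only up to order $(d(I)-\alpha+\max_i\fraks_i)\vee 0-d(I)$. This is negative as soon as $\alpha>\max_i\fraks_i$. In that case $X^{I^\ast}\phi$ is not even uniformly bounded, whereas $\mfB_{-\alpha}$ always demands a sup bound. So direct duality works exactly when $\alpha\le\max_i\fraks_i$, not when $d(I)-\alpha+\max_i\fraks_i\ge 0$. For example, on $\mathbb{R}$ with $\alpha=3/2$ and $X^I=\partial^2$, the class $\mfB_{1/2}$ controls only $\phi$ and $\phi'$. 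Hence $\sup_{\phi\in\mfB_{1/2}}\|\phi''\|_{L^\infty}=\infty$, even though your criterion gives $3/2\ge 0$.

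\textbf{Why your fix does not cover the missed range.} The missed range is $\max_i\fraks_i<\alpha\le d(I)+\max_i\fraks_i$. On its part with $d(I)<\alpha$ you could still invoke your telescoping. But for $\max_i\fraks_i<\alpha\le d(I)$ the target regularity $\beta$ is $\le 0$. Your telescoping uses only boundedness of $\phi$ and bounds the $n$-th piece by $(2^{-n}\lambda)^{\beta}$ times the smooth-class quantity, so it no longer sums.

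\textbf{Repair.} Prove the statement for a single $X_j$ and compose. For $X_j\colon B^{\gamma,w}_{pq}\to B^{\gamma-\fraks_j,w}_{pq}$ there are two cases. If $\gamma\le\max_i\fraks_i$, your direct duality argument works verbatim. If $\gamma>\max_i\fraks_i$, then $\gamma-\fraks_j>0$ and your positive-regularity test-class reduction applies. The alternative is to prove the test-class equivalence at nonpositive regularity, which requires exploiting the smoothness of $\phi$ rather than just its boundedness.

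The paper's one-line claim for $\alpha-d(I)\le 0$, that $c_{\alpha,I}X^I\phi\in\mfB_{-\alpha}$ whenever $\phi\in\mfB_{-\alpha+d(I)}$, fails in this same regime $\max_i\fraks_i<\alpha\le d(I)$; the same example shows it. So the single-vector-field reduction is needed there as well.
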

\begin{proof}
We first consider the case $\alpha-d(I)\leq 0$. Here it suffices to 
note that for every $\alpha, I$ there exists $c_{\alpha,I}>0$ such that $c_{\alpha,I}X^{I}\phi \in  \mathfrak{B}_{-\alpha}$ whenever $\phi \in  \mathfrak{B}_{-\alpha+d(I)}$.

When $\alpha-d(I)>0$ one first obtains the desired bound uniformly over more regular functions as above. Then, the full estimate follows using Lemma~\ref{prop:holder_inclusion_compact}.

\end{proof}

\subsection{A Wavelet-like Characterisation}
Throughout this section we fix $\mfr>1$ sufficiently large.  
\begin{assumption}\label{ass:pair}
For given $R\geq 0$
let $(\varphi,\rho)$ be a pair of functions belonging to $C_c^\infty\big(B_1(e)\big)$ with the following properties
\begin{enumerate}[before=\vspace{3pt}, after=\vspace{4pt}]
\item\label{it1} $\int \rho(x)\dd x = 1$ and $\int \eta^I(x) \rho(x)\dd x = 0$ for every $I$ such that $0<d(I)\leq R$,
\item\label{Item3} for $n<m$ it holds that $\rho^{(n,m)} := \rho^{(n)}\ast \rho^{(n+1)}\ast \cdots \ast \rho^{(m)}\to \varphi^{(n)}$ in $\mathcal{D}$ as $m\to \infty$ ,
\end{enumerate}
where  $\rho^{(n)}(x):= \mfr^{n|\mfs|}\rho (\mfr^{n}\cdot x)$ and $\varphi^{(n)}(x):= \mfr^{n|\mfs|}\varphi (\mfr^{n}\cdot x)$. 
\end{assumption}

\begin{remark}\label{rem:existence}
The existence of such pairs for $\mfr=\mfr(\mathbb{G})$ sufficiently large is elementary: 
one constructs $\rho$ such that Item~\ref{it1} holds by basic linear algebra. Then, one sees that Item~\ref{Item3} holds whenever
$\|\rho\|_{L^1}< \mfr$. For details, we refer to \cite[Lem.~3.10]{MS25} where such a pair was used to prove the reconstruction theorem by adapting arguments from $\mathbb{R}^d$ in \cite{friz_hairer_20_introduction}.
\end{remark}

\begin{theorem}\label{prop:equiv}
Let $p,q\in [1,\infty]$, $\alpha\in \mathbb{R}\setminus \triangle$. Let $(\varphi,\rho)$ be as in Assumption~\ref{ass:pair} for $R>|\alpha|$.
\begin{enumerate}
\item
For $\xi\in B^{\alpha,w}_{p,q}$ an equivalent norm to the one in Definition~\ref{def:Besov} is given as follows:
\begin{itemize}
\item for $\alpha<0$
\begin{equ}\label{equiv}
	\Big(\sum_{n\geq 0} \mfr^{n q\alpha} \Big\| \frac{\xi_{n+1}}{w}\Big\|^q_{L^p}\Big)^{1/q}\ ,
\end{equ}
\item for $\alpha>0$
\begin{equ}\label{equiv2}
	\Big\| \frac{\xi_0}{w}\Big\|_{L^p} 
+\max_{\substack{|K|\leq [\alpha]+1, \\ d(K)>\alpha }}\Big(\sum_{n\in \mathbb{N}} \mfr^{nq(\alpha-d(K))} \Big\| \frac{X^{K}\xi_{n}}{w}\Big\|^q_{L^p}\Big)^{1/q} \ ,
\end{equ}
\end{itemize}
where we have set $\xi_n= \xi* \tilde{\varphi}^{(n)}$.
\item\label{item2} Assume that a sequence of functions $\xi_n :\mbG\rightarrow \mbR$  is related by $\xi_n =  \xi_{n+1}\ast \tilde{\rho}^{(n)}$ and that 
\eqref{equiv}, resp. \eqref{equiv2}  is finite. Then, there exists $\xi \in B^{\alpha,w}_{pq}$ such that $\xi_n =\xi\ast \tilde{\varphi}^{(n)}$ and
 for any $\bar{\alpha}<\alpha$
$$\xi_n \to \xi \qquad \text{ in } B^{\bar{\alpha},w}_{pq} \ .$$
%
\end{enumerate}
\end{theorem}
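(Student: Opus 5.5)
We prove the two items by separate arguments. The engine in both is the telescoping identity coming from Assumption~\ref{ass:pair}, together with Taylor's theorem, Theorem~\ref{th:taylor}, which we use to obtain vanishing-moment gains.

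\textbf{The identity.} Since $\varphi^{(n)}=\rho^{(n)}\ast\varphi^{(n+1)}$, we have $\xi_n=\xi_{n+1}\ast\tilde\rho^{(n)}$. The same identity gives
\[
\varphi^{(n+1)}-\varphi^{(n)} = \big(\delta-\rho^{(n)}\big)\ast\varphi^{(n+1)} .
\]
Because $\rho$ annihilates the polynomials $\eta^I$ with $0<d(I)\le R$, the kernel $\delta-\rho^{(n)}$ annihilates $\mathcal P_{R}$. This is the source of all the gains.

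\textbf{Item 1, upper bound for the new norms.} For $\alpha<0$ this is immediate. We have $\xi_{n+1}(x)=\langle \xi,\varphi^{(n+1)}_x\rangle$ up to inversion conventions, and $c\,\varphi^{(n+1)}\in\mfB^{\mfr^{-n-1}}_{-\alpha}$. So $\mfr^{n\alpha}\|\xi_{n+1}/w\|_{L^p}$ is bounded by the $n$-th term of the discretised norm \eqref{eq:Besov_norm_1''}.

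For $\alpha>0$, we first bound $\|\xi_0/w\|_{L^p}$ by splitting $\xi=(\xi-\tilde P_x)+\tilde P_x$ and using the second term in \eqref{eq:Besov_norm_1}. Then for $d(K)>\alpha$ we write
\[
X^K\xi_n(x)=\langle \xi-\tilde P_x, X^K\varphi^{(n)}_x\rangle ,
\]
which holds because $X^K$ kills $\mathcal P_\alpha$ after translation, as $d(K)>\alpha$. The test function $\mfr^{-nd(K)}X^K\varphi^{(n)}$ is, up to a constant, in $\mfB^{\mfr^{-n}}_{-\alpha}$. This gives the bound $\mfr^{n(d(K)-\alpha)}$ times the $n$-th term of \eqref{eq:Besov_norm_1''}.

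\textbf{Item 1, lower bound, and Item 2.} These are handled by a single reconstruction argument. Given $\xi_n$ with $\xi_n=\xi_{n+1}\ast\tilde\rho^{(n)}$, we define
\[
\xi := \xi_0+\sum_{n\ge0}(\xi_{n+1}-\xi_n) .
\]
For a test function $\phi^\lambda_x$ with $\lambda\sim\mfr^{-N}$, we split the sum into two ranges.

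\emph{Small $n$ ($n<N$).} For $\alpha<0$ we bound each $|\langle \xi_{n+1},\phi^\lambda_x\rangle|$ by an average of $|\xi_{n+1}|$ over $B_\lambda(x)$. Since $\xi_N$ is itself determined by $\xi_{N+1}$ through convolution, we actually only need the single term $\langle\xi_N,\phi^\lambda_x\rangle$ plus the tail. For $\alpha>0$ we instead take $P_x$ to be the Taylor polynomial of degree $\alpha$ of $\xi_N$ at $x$ and control the remainder $\langle\xi_N-\tilde P_x,\phi^\lambda_x\rangle$ by Theorem~\ref{th:taylor}. The integral form of the remainder lets us take $L^p$ norms in $x$ after Minkowski, producing $\lambda^{d(K)}\|X^K\xi_N/w\|_{L^p}$, where the weight is moved using \eqref{eq:weight}. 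The difference between the Taylor polynomials at levels $N$ and $0$ is then telescoped. This step is where we need $\alpha\notin\triangle$: it makes the geometric series $\sum_{n\le N}\mfr^{n(d(I)-\alpha)}$ and $\sum_{n\ge N}\mfr^{n(d(I)-\alpha)}$ converge for $d(I)<\alpha$ and $d(I)>\alpha$ respectively.

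\emph{Large $n$ ($n\ge N$).} Here we write
\[
\langle\xi_{n+1}-\xi_n,\phi^\lambda_x\rangle=\langle\xi_{n+1},(\delta-\rho^{(n)})\text{-convolved }\phi^\lambda_x\rangle .
\]
The vanishing moments of $\delta-\rho^{(n)}$, combined with Theorem~\ref{th:taylor} applied to $\phi^\lambda_x$, yield a factor $(\mfr^{-n}/\lambda)^{R}$ with $R>|\alpha|$. The resulting sum is then summable against $\mfr^{-n\alpha}$.

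\textbf{Assembling.} A discrete Young/Hardy inequality in $\ell^q$ turns the geometric kernels into the desired $\ell^q$ bounds. This gives both the norm equivalence and convergence of the series defining $\xi$ in $B^{\bar\alpha,w}_{pq}$, where the loss $\alpha-\bar\alpha$ supplies the smallness of the tail. To identify $\xi\ast\tilde\varphi^{(n)}=\xi_n$ we use $\rho^{(n,m)}\to\varphi^{(n)}$ in $\mathcal D$.

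The main obstacle is the large-$n$ step in the non-commutative setting. Moving the convolution with $\delta-\rho^{(n)}$ onto the test function mixes left and right translations. To keep everything in terms of left-invariant $X^I$, with weights comparable, I would rewrite the required derivatives via \eqref{eq:re_expand_basis}; since the polynomial coefficients $p_{i,j}$ there are evaluated only on the unit-scale supports, they remain bounded.
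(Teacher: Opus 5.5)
\textbf{Verdict: there is a gap in the case $\alpha>0$.} For $\alpha<0$ your plan is essentially the paper's argument. The paper gets the upper bound from $c\,\varphi^{(n)}\in\mfB^{\mfr^{-n}}_{-\alpha}$. For the converse it writes $\langle\xi,\phi^\lambda_x\rangle=\langle\xi_N,\phi^\lambda_x\rangle+\sum_{n\ge N}\langle\xi_{n+1}-\xi_n,\phi^\lambda_x\rangle$. It uses the bound $\lambda^{-\alpha}$ for $\lambda<\mfr^{-n}$ and $\lambda^{-\alpha}(\mfr^{-n}/\lambda)^R$ for $\lambda>\mfr^{-n}$, then a discrete Hardy inequality. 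Your upper bound for $\alpha>0$ is also fine; the derivative landing on $\varphi^{(n)}$ is the right-invariant $Y^K$, but that is cosmetic. The gap is in the lower bound and Item~\ref{item2} for $\alpha>0$.

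\textbf{Why your large-$n$ step fails for $\alpha>0$.} Putting $\delta-\rho^{(n)}$ on the test function only gives $(\mfr^{-n}/\lambda)^R\|\xi_{n+1}/w\|_{L^p}$. For $\alpha>0$ the quantity \eqref{equiv2} gives no decay of $\|\xi_{n+1}/w\|_{L^p}$; it is merely bounded. After dividing by $\lambda^{\alpha}$ and summing over $n\ge N$ you are left with a factor $\lambda^{-\alpha}$.

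\textbf{The polynomial runs the wrong way.} The Besov norm needs a polynomial $P_x$ independent of $\lambda$. By Lemma~\ref{prop:holder_inclusion_compact} it must be the Taylor polynomial of the limit $\xi$, not of $\xi_N$ or $\xi_0$. So the telescoping runs from level $N$ to $\infty$:
\[
\tilde{\opP}^\alpha_x[\xi]=\tilde{\opP}^\alpha_x[\xi_N]+\sum_{n\ge N}\tilde{\opP}^\alpha_x[\xi_{n+1}-\xi_n].
\]
You must therefore control $X^I(\xi_{n+1}-\xi_n)$ for every $d(I)<\alpha$, including $I=0$. The only quantities \eqref{equiv2} gives you for this are the high derivatives $X^K\xi_{n+1}$ with $d(K)>\alpha$, and your sketch contains no estimate of this type. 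Separately, your geometric series have the roles of $d<\alpha$ and $d>\alpha$ swapped: $\sum_{n\ge N}\mfr^{n(d-\alpha)}$ diverges for $d>\alpha$.

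\textbf{The missing idea.} Let the vanishing moments of $\rho^{(n)}$ act on the Taylor expansion of $\xi_{n+1}$, not of the test function. For $I=0$, since $\int \opP^\alpha_x[\xi_{n+1}](h)\,\rho^{(n)}(h)\,dh=\xi_{n+1}(x)$,
\[
(\xi_n-\xi_{n+1})(x)=\int\big(\xi_{n+1}(xh)-\opP^{\alpha}_x[\xi_{n+1}](h)\big)\rho^{(n)}(h)\,dh .
\]
The integral remainder of Theorem~\ref{th:taylor} then applies, with $d(J)>\alpha$ precisely because $\alpha\notin\triangle$. It gives $\|(\xi_{n+1}-\xi_n)/w\|_{L^p}\lesssim\max_K\mfr^{-nd(K)}\|X^K\xi_{n+1}/w\|_{L^p}$. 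The same argument for $X^I\xi_{n+1}$ gives the paper's estimate \eqref{eq:bounding_by_high_derivative}:
\[
\Big\|\frac{X^I(\xi_{n+1}-\xi_n)}{w}\Big\|_{L^p}\lesssim\mfr^{nd(I)}\max_K\mfr^{-nd(K)}\Big\|\frac{X^K\xi_{n+1}}{w}\Big\|_{L^p}.
\]

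\textbf{How this closes the argument.} Set $a_n:=\mfr^{n(\alpha-d(K))}\|X^K\xi_{n+1}/w\|_{L^p}$. For $\lambda>\mfr^{-n}$, the $n$-th increment, polynomial part included, then contributes $\sum_{d(I)<\alpha}(\mfr^{-n}/\lambda)^{\alpha-d(I)}a_n$. This is summed by the Hardy-type bound of Lemma~\ref{lem:operatorbound} applied to $T_{\alpha-d(I)}$. It is this, not $R>|\alpha|$, that makes the case $\alpha>0$ work. The same estimate also gives the $B^{\bar\alpha,w}_{pq}$-summability of the increments, including their polynomial parts, and the convergence of $X^I\xi_n/w$ in $L^p$ needed for Item~\ref{item2}.
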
		

\begin{remark}\label{rem:wavelet_higher}
It will be clear from the proof, that one obtains equivalent norms replacing the role of $0$ respectively $1$ in \eqref{equiv} and \eqref{equiv2} by any $n_0\in \mathbb{N}$, resp.\ $n_0+1$.
\end{remark}
%

To improve exposition, we shall present the proof of Theorem~\ref{prop:equiv} separately in the cases $\alpha<0$ and $\alpha>0$. First,
define for $\gamma>0$ and any compactly supported sequence $a_{m}: \mathbb{N}\to \mathbb{R}$
\begin{equ}\label{operator}
T_\gamma\big[\{a_m\}_m\big](\lambda):= \sum_{\substack{m\in \mathbb{N}\\ \mfr^{-m}\leq \lambda}} a_m \frac{\mfr^{-m\gamma}}{\lambda^\gamma} \ .
\end{equ}
\begin{lemma}\label{lem:operatorbound}
For every $q\in [1,\infty]$, the map defined in \eqref{operator} extends to a bounded linear map $T_\gamma: \ell^{q}(\mathbb{N})\to L^{q}((0,1), \frac{d\lambda}{\lambda})$.
\end{lemma}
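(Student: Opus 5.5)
The plan is to reduce Lemma~\ref{lem:operatorbound} to a discrete convolution inequality and apply Young's inequality on $\mathbb{Z}$. Throughout, $\gamma>0$ is essential. First we discretise in $\lambda$. For $\lambda\in(\mfr^{-k-1},\mfr^{-k}]$ with $k\in\mathbb{N}$, the constraint $\mfr^{-m}\leq\lambda$ forces $m\geq k+1$, and the weight satisfies $\lambda^{-\gamma}\le \mfr^{(k+1)\gamma}$. Hence, for such $\lambda$,
\begin{equ}
\big|T_\gamma[\{a_m\}](\lambda)\big|\leq \mfr^{\gamma}\sum_{m\geq k+1}|a_m|\,\mfr^{-(m-k)\gamma}=:\mfr^\gamma\, b_k\ .
\end{equ}
Each dyadic-type interval $(\mfr^{-k-1},\mfr^{-k}]$ has $\frac{d\lambda}{\lambda}$-measure $\log\mfr$. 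For $q<\infty$ this gives $\|T_\gamma a\|_{L^q(d\lambda/\lambda)}^q\leq \mfr^{q\gamma}\log\mfr\sum_k b_k^q$, and for $q=\infty$ it gives $\|T_\gamma a\|_{L^\infty}\le \mfr^\gamma\sup_k b_k$.

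Next we bound $b$ in $\ell^q$. Write $b_k=\sum_{j\geq1} \mfr^{-j\gamma}|a_{k+j}|$, which is the convolution of $|a|$ with the kernel $\kappa_j=\mfr^{-j\gamma}\mathbf 1_{j\geq1}$, reflected in the index. Since $\gamma>0$, we have $\|\kappa\|_{\ell^1}=\sum_{j\ge1}\mfr^{-j\gamma}=(\mfr^\gamma-1)^{-1}<\infty$. Young's inequality (equivalently, Minkowski's inequality applied to $b=\sum_j\kappa_j\,\tau_j|a|$, where $\tau_j$ is a shift with $\|\tau_j|a|\|_{\ell^q}\le\|a\|_{\ell^q}$) then yields $\|b\|_{\ell^q}\le(\mfr^\gamma-1)^{-1}\|a\|_{\ell^q}$ for every $q\in[1,\infty]$.

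Combining the two steps gives $\|T_\gamma a\|_{L^q((0,1),d\lambda/\lambda)}\lesssim_{\gamma,\mfr,q}\|a\|_{\ell^q}$ for all compactly supported sequences. Since $T_\gamma$ is linear and compactly supported sequences are dense in $\ell^q$ for $q<\infty$, it extends by density to a bounded operator. For $q=\infty$ density fails, so we instead define $T_\gamma$ directly: for $a\in\ell^\infty$ the defining series converges absolutely for each fixed $\lambda$, and the bound above applies verbatim.

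There is no real obstacle here. The only points requiring care are the boundary intervals, namely that $\lambda\in(0,1]$ covers exactly the indices $k\ge0$, and the treatment of $q=\infty$ just described.
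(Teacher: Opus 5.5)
Your proof is correct, but it takes a different route from the paper.

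The paper checks only the two endpoint cases and interpolates. For $q=1$ it uses Fubini together with the uniform bound $\sup_m\|\mathbf{1}_{\mfr^{-m}\le\lambda}\mfr^{-m\gamma}\lambda^{-\gamma}\|_{L^1((0,1),d\lambda/\lambda)}<\infty$ on the ``columns'' of the kernel. For $q=\infty$ it uses the uniform bound $\sup_\lambda\sum_{m:\mfr^{-m}<\lambda}\mfr^{-m\gamma}\lambda^{-\gamma}<\infty$ on the ``rows''. In effect this is a Schur test.

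You instead freeze $\lambda$ on $\mfr$-adic shells, where the kernel is comparable to $\mfr^{-(m-k)\gamma}$. This dominates $T_\gamma$ by a one-sided discrete convolution with the summable kernel $\mfr^{-j\gamma}\mathbf{1}_{j\ge 1}$, and Minkowski's inequality then handles all $q\in[1,\infty]$ at once.

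What each route buys:
\begin{itemize}
\item Your version avoids interpolation and gives an explicit constant $\mfr^{\gamma}(\log\mfr)^{1/q}(\mfr^\gamma-1)^{-1}$.
\item It also treats the extension to $\ell^\infty$ explicitly, where density fails; the paper leaves this implicit.
\item The paper's version is shorter, and it is the argument the paper reuses directly for the continuous-parameter operators of Lemma~\ref{lem:operatorbound2}. Your argument would need an additional shell decomposition in $|h|$ there.
\end{itemize}

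One cosmetic slip: on $(\mfr^{-k-1},\mfr^{-k}]$ the endpoint $\lambda=\mfr^{-k}$ also admits $m=k$. So ``forces $m\geq k+1$'' is only true on $[\mfr^{-k-1},\mfr^{-k})$. Since the discrepancy is a $d\lambda/\lambda$-null set, and $L^\infty$ is an essential supremum, nothing in your argument changes.
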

\begin{proof}
By interpolation, it suffices to check the cases $q=1$ and $q=\infty$. For $q=1$, the claim follows from Fubini since 
$$\sup_{m\in \mathbb{N}}\Big\|\mathbf{1}_{\mfr^{-m}\leq \lambda} \frac{\mfr^{-m\gamma}}{\lambda^\gamma} \Big\|_{L^1((0,1),\frac{d\lambda}{\lambda})}<\infty \ .$$
For $q= \infty$ one notes that 
$$\sup_{\lambda\in (0,1)} \Big[\sum_{m: \mfr^{-m}<\lambda} \frac{\mfr^{-m\gamma}}{\lambda^\gamma}\Big]<\infty \ .$$
\end{proof}

	\begin{proof}[Proof of Theorem~\ref{prop:equiv} for $\alpha\leq 0$]
	For $\xi\in B^{\alpha,w}_{p,q}$, note that 
	$\xi_n(x)=\xi*\tilde{\varphi}^{(n) }= \langle \xi, \varphi^{n}_x\rangle .$	
	Since there exists a constant $c>0$ such that $c\varphi^{(n)}\in \mathfrak{B}^{\mfr^{-n}}_{r}$, it follows by Remark~\ref{rm:changing_scales} that the quantity in \eqref{equiv} can be bounded by the Besov norm of $\xi$.

Next we check Item~\ref{item2}, the full theorem will follow from the quantitative estimates established in doing so. Assume that $q<\infty$ (the case $q=\infty$ is simpler and left to the reader). Let $-R<\alpha'<\alpha$.
Note that 
$
\langle \xi_n-\xi_{n+1}, \phi^\lambda_x\rangle
=	 \langle \xi_{n+1}, (\phi^\lambda_x* \rho^{(n)}  -\phi^\lambda_x) \rangle \ . 
$ Therefore,
\begin{equs}
\Big\| \sup_{\phi \in \mfB_{ -\alpha' }} \frac{|\langle \xi_n-\xi_{n+1},\phi^\lambda_x\rangle|}{w(x)\lambda^{\alpha'}} \Big\|_{L^p} 	
&\lesssim \Big\|\frac{\xi_{n+1}}{w }\Big\|_{L^p}  \sup_{\phi \in \mfB_{ -{\alpha'}}} \frac{\|\phi^\lambda_x* \rho^{(n)}  -\phi^\lambda_x \|_{L^1}}{\lambda^{{\alpha'}}} 		&\\
	& \lesssim \Big\|\frac{\xi_{n+1}}{w }\Big\|_{L^p} \lambda^{-{\alpha'}}\ , \label{eq:small scale}
\end{equs}
%
 where the first inequality uses that $w$ is a weight and Young's inequality. 
Noting that by Assumption~\ref{ass:pair} 
$$\phi^\lambda_x* \rho^{(n)}(y)  -\phi^\lambda_x(y)= (\phi^\lambda_x -\tilde{\opP}_{y}^{R}[\phi^\lambda_x])* \rho^{(n)}(y) , $$
$\|\phi^\lambda_x* \rho^{(n)}(y)  -\phi^\lambda_x(y)\|_{L^1} \lesssim \lambda^{-R}\mfr^{-nR}$ and therefore
\begin{equs}
	 \Big\| \sup_{\phi \in \mfB_{ -{\alpha'} \rceil }} \frac{|\langle \xi_n-\xi_{n+1},\phi^\lambda_x\rangle|}{w(x)\lambda^{\alpha'}} \Big\|_{L^p} 	&\lesssim \Big\|\frac{\xi_{n+1}}{w }\Big\|_{L^p} \sup_{\phi \in \mfB_{ -{\alpha'} }} \frac{\|\phi^\lambda_x* \rho^{(n)}  -\phi^\lambda_x) \|_{L^1}}{\lambda^{{\alpha'}}} 	&\\
	& \lesssim \Big\|\frac{\xi_{n+1}}{w }\Big\|_{L^p} \lambda^{-R-{\alpha'}}\mfr^{-nR}\ . \label{eq:large scale}
\end{equs}
Taking the $L^{q}\big(\frac{d\lambda}{\lambda}\big)$ norm of  \eqref{eq:small scale} over $(0,\mfr^{-n})$ and of \eqref{eq:large scale} over $(\mfr^{{-n}},1)$ we find that $\|\xi_{n+1}-\xi_{n}\|_{B^{{\alpha'},w}_{pq}}\lesssim \Big\|\frac{\xi_{n+1}}{w }\Big\|_{L^p} \mfr^{n{\alpha'}}$, 
and thus for $\alpha'=\bar{\alpha}<\alpha$
$$
\sum_{n\geq 0} \|\xi_{n+1}-\xi_{n}\|_{B^{\bar{\alpha},w}_{pq}}\leq \sum_{n\geq 0} \Big\|\frac{\xi_{n+1}}{w }\Big\|_{L^p} \mfr^{n\bar{\alpha}} <\infty \ .
$$

We conclude that $\xi_n \to \xi$ in $B_{pq}^{\bar{\alpha},w}
$, since clearly $\xi_0\in B_{pq}^{{\alpha},w}$.
Since $\xi_n= \xi_m* \tilde{\rho}^{(n,m-1)}$, taking $m\to \infty$ we obtain $\xi_n= \xi*\tilde{\varphi}^{(n) }$.

It remains to check that $\|\xi\|_{B^{\alpha,w}_{pq}} $ is bounded by \eqref{equiv}. For $\lambda\in (0,1)$ set $n_\lambda:= \min \{n\in \mathbb{N}: \mfr^{-n}<\lambda\}$
and write  
\begin{equ}\label{eq:sum_dec}
\langle\xi, \phi^\lambda_{x}\rangle= \langle\xi_{n_\lambda}, \phi^\lambda_{x}\rangle + \sum_{m\geq n_\lambda}\langle\xi_{m+1}-\xi_{m} , \phi^\lambda_{x}\rangle\ ,
\end{equ}
and therefore
\begin{align*}
\| \xi \|_{B^{\alpha,w}_{pq}}
&\leq \Big(\int_{0}^{1} \Big( \Big\| \sup_{\phi \in \mfB_{ -\alpha }} \frac{|\langle \xi_{n_\lambda},\phi^\lambda_x\rangle|}{w(x)\lambda^\alpha} \Big\|_{L^p}+\sum_{m\geq n_\lambda}\Big\|\sup_{\phi \in \mfB_{ -\alpha }} \frac{|\langle \xi_{m+1}-\xi_{m},\phi^\lambda_x\rangle|}{w(x)\lambda^\alpha} \Big\|_{L^p}  \Big)^q \frac{d\lambda}{\lambda}	\Big)^{1/q}	\\
&\lesssim \Big(\int_{0}^{1} \Big( \Big\|\frac{\xi_{n_{\lambda}+1}}{w }\Big\|_{L^p} \lambda^{\alpha} \frac{d\lambda}{\lambda}	\Big)^{1/q}	
+\Big(\int_{0}^{1}\sum_{m\geq n_\lambda}\Big\|\frac{\xi_{m+1}}{w }\Big\|_{L^p} \lambda^{-R-\alpha}\mfr^{-mR}  \Big)^q \frac{d\lambda}{\lambda}	\Big)^{1/q} \ .
\end{align*}
Since $\lambda^{-\alpha}\sim\mfr^{ n_\lambda \alpha}$ the first term is bounded by \eqref{equiv}. 
For the second term, set $a_m:=\|\xi_{m+1}/w\|_{L^p}\mfr^{m\alpha} $ and let $T_{\alpha+R}$ be the operator defined in \eqref{operator}. Then,
\begin{equs}
&\Big(\int_{0}^{1}\Big(\sum_{m\geq n_\lambda}\Big\|\frac{\xi_{m+1}}{w }\Big\|_{L^p} \lambda^{-R-\alpha}\mfr^{-mR}  \Big)^q \frac{d\lambda}{\lambda}	\Big)^{1/q}
&=\Big(\int_{0}^{1}\Big(\sum_{m\geq n_\lambda}\Big\|\frac{\xi_{m+1}}{w }\Big\|_{L^p}\mfr^{m\alpha} \frac{\mfr^{-m(R+\alpha)}}{\lambda^{R+\alpha}}  \Big)^q \frac{d\lambda}{\lambda}	\Big)^{1/q}\\
&= \Big(\int_{0}^{1}\Big(
T_{R+\alpha}\big[ \big\{ a_m\big\}_m \big](\lambda)\Big)^q
\frac{d\lambda}{\lambda}	\Big)^{1/q}\ 
\end{equs}
which is bounded by \eqref{equiv} by Lemma~\ref{lem:operatorbound}.
\end{proof}

\begin{proof}[Proof of Theorem~\ref{prop:equiv} for $\alpha>0$]

The claim that whenever $\xi\in B^{\alpha,w}_{p,q}$ the quantity in \eqref{equiv2} is bounded by a multiple of $\|\xi\|_{B^{\alpha,w}_{p,q}}$ is seen very similarly to the case $\alpha<0$, by using that 
$\xi_{0}(x)= \langle \xi, \varphi_{x}\rangle $ and that
$$ X^{K}\xi_n(x)=X^{K} \int \xi(y) \varphi^{(n)}(x^{-1}y)dy=\langle\xi,  (Y^{K}\varphi^{(n)})_x\rangle= \langle \xi(y)-\tilde{P}_x , (Y^{K}\varphi^{(n)})_x\rangle
$$
where in the first and second equality we used Lemma~\ref{prop:holder_inclusion_compact}.
 Thus we turn to the proof of Item~\ref{item2}. We observe that by Theorem~\ref{th:taylor} for any smooth function and $\alpha'>0$
\begin{equ}\label{eq:taylor_expansion_formula}
\langle f- \tilde{\opP}^{\alpha'}_x[f], \phi^\lambda_x \rangle
= \int_{\mathbb{G}} \big(f(xy)- {\opP}^{\alpha'}_x[f](y)\big) \phi^\lambda (y) dy 
=  \sum_{\substack{|I|\leq [\alpha']+1, \\ d(I)>{\alpha'} }}\int_{\mathbb{G}} \int X^{I}f(xz) Q^{I}(y,dz) \phi^\lambda (y) dy ,
\end{equ}
and therefore
\begin{equ}\label{eq:techical>0est}
\Big\| \frac{\langle f- \tilde{\opP}^{\alpha'}_x[f], \phi^\lambda_x \rangle}{w(x)} \Big\|_{L^p_x} \lesssim \sum_{\substack{|I|\leq [\alpha']+1, \\ d(I)>{\alpha'} }} \int_{\mathbb{G}} \int_{\mathbb{G}}  \Big\| \frac{X^{I}f}{w} \Big\|_{L^p} |Q^{I}|(y,dz) \phi^\lambda (y) dy 
\lesssim \sum_{\substack{|I|\leq [\alpha']+1, \\ d(I)>{\alpha'} }} \lambda^{d(I)} \Big\| \frac{X^{I}f}{w} \Big\|_{L^p} \ .
\end{equ}
Applying this to $\xi_0$ one finds that
$$\Big(\int_0^1 \Big\| \frac{\langle \xi_0- \tilde{\opP}^{\alpha'}_x[\xi_0], \phi_x^\lambda \rangle}{w(x)\lambda^{\alpha'}} \Big\|_{L^p_x}^q \frac{d\lambda}{\lambda}\Big)^{1/q}
\lesssim \sum_{\substack{|I|\leq [\alpha']+1, \\ d(I)>{\alpha'} }}  \Big\| \frac{X^{I}\xi_0}{w} \Big\|_{L^p}\ .
 $$

Next we turn our attention to $\xi_{n+1}-\xi_n$ and
 find that 
\begin{equs}
\Big(\int_0^{\mfr^{-n}} \Big\| \frac{\langle \xi_{n+1}-\xi_n- \tilde{\opP}^{\alpha'}_x[\xi_{n+1}-\xi_n], \phi^\lambda_x \rangle}{w(x)\lambda^{\alpha'}} \Big\|_{L^p_x}^q \frac{d\lambda}{\lambda}\Big)^{1/q}
&\lesssim \sum_{\substack{|I|\leq [\alpha']+1, \\ d(I)>{\alpha'} }} \Big( \int_{0}^{\mfr^{-n}} \lambda^{(d(I)-{\alpha'})q} \frac{d\lambda}{\lambda}\Big)^{1/q} \Big\| \frac{X^{I}[\xi_{n+1}-\xi_n]}{w} \Big\|_{L^p} \\
&\lesssim  \sum_{\substack{|I|\leq [\alpha']+1, \\ d(I)>{\alpha'} }} \mfr^{-n(d(I)-{\alpha'})}  \Big\| \frac{X^{I}[\xi_{n+1}-\xi_n]}{w} \Big\|_{L^p} \\
&\lesssim  \sum_{\substack{|I|\leq [\alpha']+1, \\ d(I)>{\alpha'} }} \mfr^{-n(d(I)-{\alpha'})}  \Big\| \frac{X^{I}\xi_{n+1}}{w} \Big\|_{L^p} \ . \label{eq:alpha>0}
\end{equs}
Similarly, 
\begin{equ}\label{eq:alpha>0a}
 \Big\| \frac{\langle \xi_{n+1}-\xi_n, \phi^\lambda_x \rangle}{w(x)\lambda^{\alpha'}} \Big\|_{L^p_x}
\lesssim \Big\| \frac{\xi_{n+1}-\xi_n}{w} \Big\|_{L^p}  \lambda^{-{\alpha'}}  
\lesssim \lambda^{-{\alpha'}}   \sum_{\substack{|K|\leq [{\alpha'}]+1, \\ d(K)>{\alpha'}}} \Big\|\frac{X^K \xi_{n+1}}{w} \Big\|_{L^p} \mfr^{-nd(K)} 
\end{equ}
and 
\begin{equs}
 \Big\| \frac{\langle \tilde{\opP}^{\alpha'}_x[\xi_{n+1}-\xi_n], \phi_x^\lambda \rangle}{w(x)\lambda^{\alpha'}} \Big\|_{L^p_x}
&\lesssim
 \sum_{d(I)< {\alpha'}}  \Big\| \frac{X^{I}[\xi_{n+1}-\xi_{n}]}{w} \Big\|_{L^p}  \lambda^{-{\alpha'}+d(I)} \\
&\lesssim 
\sum_{\substack{|K|\leq [{\alpha'}]+1, \\ d(K)>{\alpha'}}} \Big\|\frac{X^K \xi_{n+1}}{w} \Big\|_{L^p} \mfr^{-nd(K)} 
 \sum_{d(I)< {\alpha'}}   \frac{\lambda^{-{\alpha'}+d(I)}}{\mfr^{-n d(I)}} \ , \label{eq:alpha>0b}
\end{equs}
where for the first inequality one notes that the coefficient of $\eta^{I}$ in $\tilde{\opP}^{\alpha'}[f]$ is a linear combination of $\{X^{J}f\}_{d(J)\leq d(I)}$, and therefore for some $c_{I,J}\in \mathbb{R}$ (only depending on $\mathbb{G}$)
$$\langle \tilde{\opP}^{\alpha'}_x[f], \phi_x^\lambda\rangle = \sum_{d(I)< {\alpha'}} \sum_{d(J)\leq d(I)} c_{IJ} X^Jf(x) \langle \eta^I, \phi^\lambda\rangle \ .$$
For the second inequality in \eqref{eq:alpha>0a} and \eqref{eq:alpha>0b} we used that for $d(I)<\alpha'$
\begin{equ}\label{eq:bounding_by_high_derivative}
\Big\| \frac{X^{I}[\xi_{n+1}-\xi_{n}]}{w}\Big\|_{L^p} \lesssim 
\mfr^{nd(I)} \max_{\substack{|K|\leq [{\alpha'}]+1, \\ d(K)>{\alpha'}}} \Big\|\frac{X^K \xi_{n+1}}{w} \Big\|_{L^p} \mfr^{-nd(K)} \ ,
\end{equ}
which in turn follows since
\begin{equs}
X^{I}[\xi_{n+1}-\xi_n](x)&=  \int_{\mathbb{G}} \big(X^{I}\xi_{n+1}(x)-X^{I}\xi_{n+1}(y)\big)\rho^{n}_x(y) dy \\
&= -\int_{\mathbb{G}} \big(X^{I}\xi_{n+1}(y)-\tilde{\opP}^{{\alpha'}-d(I)}_{y}[X^{I}\xi_{n+1}](x)\big)\rho^{n}_x(y) dy\\
&=
-\sum_{\substack{|J|\leq [{\alpha'}-d(I)]+1, \\ d(J)>{\alpha'}-d(I) }}
 \int_{\mathbb{G}}\int_{\mathbb{G}} X^{J}X^{I}\xi_{n+1}(xz) Q^{J}(y,dz)\rho^{n}_x(y) dy \ .
\end{equs}
Thus, for $\alpha'=\bar{\alpha}<\alpha$, by \eqref{eq:alpha>0} and taking the $q$-th power of \eqref{eq:alpha>0a} and \eqref{eq:alpha>0b} and integrating over $\lambda\in (\mfr^{-n},1)$ we find that
 $$\sum_n\|\xi_{n+1}-\xi_n\|_{B^{\bar{\alpha},w}_{pq}}<\infty \ . $$
  The fact that the limit $\xi\in B^{\bar{\alpha},w}_{pq}$ satisfies $\xi_n= \xi*\tilde{\varphi}^{(n)}$ follows as for $\alpha<0$.

We next check that $\xi\in B^{{\alpha},w}_{pq}$. 
For $\lambda\in (0,1)$ we set $n_\lambda:= \min \{n\in \mathbb{N}: \mfr^{-n}<\lambda\}$ and write  
\begin{equ}\label{eq:sum_dec2}
\langle\xi-\tilde{\opP}_x[\xi], \phi^\lambda_{x}\rangle= \langle\xi_{n_\lambda}-\tilde{\opP}_x[\xi_{n_\lambda}], \phi^\lambda_{x}\rangle 
+ \sum_{m\geq n_\lambda}\langle \xi_{m+1}-\xi_m- \tilde{\opP}_x[\xi_{m+1}-\xi_m] , \phi^\lambda_{x}\rangle\ ,
\end{equ}
and therefore
\begin{equs}
\| \xi \|_{B^{\alpha,w}_{pq}}
&\leq \Big(\int_{0}^{1} \Big( \Big\| \sup_{\phi \in \mfB_{ -\alpha }} \frac{|\langle \xi_{n_\lambda}-\tilde{\opP}_x[\xi_{n_\lambda}],\phi^\lambda_x\rangle|}{w(x)\lambda^\alpha} \Big\|_{L^p}+\sum_{m\geq n_\lambda}\Big\| \frac{|\langle  \xi_{m+1}-\xi_m- \tilde{\opP}_x[\xi_{m+1}-\xi_m],\phi^\lambda_x\rangle|}{w(x)\lambda^\alpha} \Big\|_{L^p}  \Big)^q \frac{d\lambda}{\lambda}	\Big)^{1/q}	\\
&\lesssim \sum_{\substack{|I|\leq [\alpha]+1, \\ d(I)>{\alpha} }} 
\Big(\int_{0}^{1} \Big(   \Big\| \frac{X^{I}\xi_{n_{\lambda}}}{w} \Big\|_{L^p}  \lambda^{-{(\alpha-d(I))}} \Big)^{q}\frac{d\lambda}{\lambda}	\Big)^{1/q}\label{eq:second_to_last_line}	\\
&\qquad+\sum_{\substack{|K|\leq [{\alpha}]+1, \\ d(K)>{\alpha}}}\sum_{d(I)< {\alpha}} \Big(\int_{0}^{1} \Big(\sum_{m\geq n_\lambda} \Big\|\frac{X^K \xi_{m+1}}{w} \Big\|_{L^p} \mfr^{-md(K)} 
  \frac{\lambda^{-{\alpha}+d(I)}}{\mfr^{-m d(I)}} \Big)^q \frac{d\lambda}{\lambda}	\Big)^{1/q}\ . \label{last_line}\\
\end{equs}
where in the second inequality to bound the first term we used \eqref{eq:techical>0est}, and for the second one we used \eqref{eq:alpha>0a} and \eqref{eq:alpha>0b}.
To conclude, note that as in the case $\alpha<0$ the term \eqref{eq:second_to_last_line} is bounded by \eqref{equiv2} since $\lambda^{-(\alpha-d(I))}\sim\mfr^{ n_\lambda (\alpha-d(I))}$. Finally, for \eqref{last_line}, note that for each $K, I$ as in that summand is equal to 
$$ \Big(\int_{0}^{1} \Big(\sum_{m\geq n_\lambda} \Big\|\frac{X^K \xi_{m+1}}{w} \Big\|_{L^p} \mfr^{m(\alpha-d(K))} 
  \frac{\mfr^{-m(\alpha- d(I))}}{\lambda^{{\alpha}-d(I)}} \Big)^q \frac{d\lambda}{\lambda}	\Big)^{1/q}
= \Big(\int_{0}^{1}\Big(
T_{\alpha-d(I)}\big[ \big\{ a^K_m\big\}_m \big](\lambda)\Big)^q
\frac{d\lambda}{\lambda}	\Big)^{1/q}\   
  $$
for $a^K_m:=\Big\|\frac{X^K \xi_{m+1}}{w} \Big\|_{L^p} \mfr^{m(\alpha-d(K))}  $ and is thus bounded by \eqref{equiv2} by Lemma~\ref{lem:operatorbound}.

Finally, it follows straightforwardly from \eqref{eq:bounding_by_high_derivative} that $X^{I}\xi_n/w$ converges in $L^{p}(\mathbb{G})$ and the limit satisfies the desired bound.
\end{proof}
%
\subsection{Embeddings}

One also directly recovers the following Besov embeddings.
\begin{corollary}\label{prop:embedding}
The identity map is a continuous embedding for the following spaces:	
		%
		\begin{enumerate}
			\item
		 $B^{\alpha,w}_{pq}(\mbG)\subset B^{\beta,w}_{pq}(\mbG)$ for every $\beta<\alpha$ ,
		 \item\label{prop_item:embedding}
			$B^{\alpha,w}_{pq}(\mbG)\subset B^{\alpha,w}_{pq'}(\mbG)$ for every $q<q'$,
			\item $
B^{\alpha,w'}_{p,q}(\mathbb G)
   \subset B^{\alpha,w}_{p,q}(\mathbb G)$ for weights
 $w,w'$  such that 
$
\sup_{x\in\mathbb G}\frac{w'(x)}{w(x)}<\infty,
$
\item\label{Item_with_num_cond}  $B^{\alpha_1,w}_{p_1,q}\subset B^{\alpha_0,w}_{p_0,q}$ for $\alpha_0,\alpha_1\in \mathbb{R}\setminus\triangle$ whenever 
$
\alpha_1-\alpha_0\geq\frac{|\fraks|}{p_1}-\frac{|\fraks|}{p_0}>0\ .
$
\end{enumerate}
\end{corollary}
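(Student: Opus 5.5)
All four embeddings will be read off from the wavelet-like characterisation of Theorem~\ref{prop:equiv}, so we first restrict to $\alpha,\beta\notin\triangle$. Items 1–3 can also be checked directly on Definition~\ref{def:Besov} where that is more convenient. For $\alpha\in\triangle$ in items 1–3, the direct argument from the definition will be used.

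\textbf{Items 2 and 3.} Item 2 follows from the embedding $\ell^q\subset\ell^{q'}$ applied to the sequences in \eqref{equiv} or \eqref{equiv2}. Alternatively, one can use the discrete norm \eqref{eq:Besov_norm_1''} of Remark~\ref{rm:changing_scales} directly, with the same infimising $P$. Item 3 is immediate from Definition~\ref{def:Besov}, since $1/w\le C/w'$ pointwise with $C=\sup w'/w$.

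\textbf{Item 1.} For $\beta<\alpha$ both negative, we use \eqref{equiv}. Write $\mfr^{n\beta}=\mfr^{n\alpha}\mfr^{-n(\alpha-\beta)}$ and apply Hölder in $n$, using that the geometric sequence $\mfr^{-n(\alpha-\beta)}$ is summable. This gives an $\ell^\infty$ bound, and in fact an $\ell^q$ bound with a constant. The case $\beta<\alpha$ both positive, with matching integer parts, is identical using \eqref{equiv2}.

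When the integer parts differ, or when $\beta<0<\alpha$, the set of multi-indices $K$ changes. We handle this with \eqref{eq:bounding_by_high_derivative}-type estimates. One bounds $\|X^{K'}\xi_n/w\|_{L^p}$ for $d(K')>\beta$ by $\mfr^{n(d(K')-d(K))}$ times $\|X^K\xi_{n}/w\|$-type quantities with $d(K)>\alpha$. Alternatively, one writes $X^{K'}\xi_n$ via the Taylor remainder of Theorem~\ref{th:taylor}, together with the telescoping $\xi_n=\xi_0+\sum_{m<n}(\xi_{m+1}-\xi_m)$. For $\beta<0<\alpha$, it is simpler to note that $\|\xi_{n+1}/w\|_{L^p}$ is controlled by $\|\xi_0/w\|_{L^p}+\sum_{m\le n}\|(\xi_{m+1}-\xi_m)/w\|_{L^p}$. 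Each increment is $\lesssim \mfr^{-md(K)}\|X^K\xi_{m+1}/w\|_{L^p}$ by \eqref{eq:bounding_by_high_derivative} with $I=0$. This yields a uniform bound, which the factor $\mfr^{n\beta}$ makes summable.

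\textbf{Item 4.} This is the main step, and its key ingredient is a Bernstein/Nikolskii-type inequality at scale $\mfr^{-n}$:
\[
\Big\|\frac{\xi*\tilde\varphi^{(n)}}{w}\Big\|_{L^{p_0}}\lesssim \mfr^{n|\fraks|(1/p_1-1/p_0)}\Big\|\frac{\xi*\tilde\varphi^{(n-1)}}{w}\Big\|_{L^{p_1}}
\qquad\text{for } p_0> p_1.
\]
We will derive it from $\xi_{n}=\xi_{n+1}*\tilde\rho^{(n)}$, shifting the index as allowed by Remark~\ref{rem:wavelet_higher}. Young's inequality gives $\|f*\tilde\rho^{(n)}\|_{L^{p_0}}\le\|f\|_{L^{p_1}}\|\rho^{(n)}\|_{L^{r}}$ with $1+1/p_0=1/p_1+1/r$, and $\|\rho^{(n)}\|_{L^r}\sim\mfr^{n|\fraks|(1-1/r)}=\mfr^{n|\fraks|(1/p_1-1/p_0)}$.

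The weight is handled locally. Since $\rho^{(n)}$ is supported in $B_1$, the factor $w(x)$ may be replaced by $w(y)$ up to a constant, by Definition~\ref{def:weight}. We therefore make the Young estimate weighted by proving it in the form $\int|f(y)||\rho^{(n)}(y^{-1}x)|/w(x)\,dy\lesssim \int |f(y)|/w(y)\,|\rho^{(n)}(y^{-1}x)|\,dy$.

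The index shift is the delicate point. The relation gives $\xi_n$ in terms of $\xi_{n+1}$, which is the wrong direction for the bound above. To fix this, we replace $\varphi$ by $\varphi*\rho'$ for an auxiliary compactly supported mollifier and use that Theorem~\ref{prop:equiv} gives equivalent norms for any admissible pair, with $\xi_n$ smoothed at its own scale. Concretely, we write $\xi_{n+1}=\xi_{n+2}*\tilde\rho^{(n+1)}$ and apply Young with $\rho^{(n+1)}$, so that we bound $\|\xi_{n+1}/w\|_{L^{p_0}}$ by $\mfr^{(n+1)|\fraks|(1/p_1-1/p_0)}\|\xi_{n+2}/w\|_{L^{p_1}}$. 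The shift by one in the index only costs a constant factor $\mfr^{\alpha}$.

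Inserting this into \eqref{equiv} gives $\mfr^{n\alpha_0}\|\xi_{n+1}/w\|_{L^{p_0}}\lesssim \mfr^{n(\alpha_0+|\fraks|/p_1-|\fraks|/p_0)}\|\xi_{n+2}/w\|_{L^{p_1}}\le \mfr^{n\alpha_1}\|\xi_{n+2}/w\|_{L^{p_1}}$, and the result follows by summing in $\ell^q$. For positive regularities, the same argument is applied to $X^K\xi_n$, using $X^K(\xi_{n+1}*\tilde\rho^{(n)})=\xi_{n+1}*X^K\tilde\rho^{(n)}$ or commuting the derivative onto $\xi_{n+1}$ as appropriate. The change of multi-index sets between $\alpha_0$ and $\alpha_1$ is then handled as in item 1.
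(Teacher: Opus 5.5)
\textbf{Agreement with the paper.} Items 2 and 3, and item 4 for $\alpha_1<0$, are handled as in the paper: weighted Young's inequality applied to $\xi_n=\xi_{n+1}*\tilde\rho^{(n)}$, with $\|\rho^{(n)}\|_{L^r}\sim\mfr^{n|\fraks|(1-1/r)}$. For item 2, going through the discrete norm \eqref{eq:Besov_norm_1''} is actually more careful than the paper's ``immediate''.

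The relation $\xi_n=\xi_{n+1}*\tilde\rho^{(n)}$ already points the right way: it bounds level $n$ in $L^{p_0}$ by level $n+1$ in $L^{p_1}$, and the index shift costs only $\mfr^{\alpha_1}$. So drop the displayed inequality with $\xi*\tilde\varphi^{(n-1)}$, and drop the auxiliary pair built from $\varphi*\rho'$, whose admissibility you never verify.

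For item 1 the paper argues directly from Definition~\ref{def:Besov} ($\mfB_{-\beta}\subset\mfB_{-\alpha}$, $\lambda^{-\beta}\le\lambda^{-\alpha}$, truncate $P_x$), which is shorter. Your wavelet route is valid for $\alpha,\beta\notin\triangle$ only in its telescoping form, because \eqref{eq:bounding_by_high_derivative} controls low derivatives of the increments $\xi_{m+1}-\xi_m$, not of $\xi_n$ itself.

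Your deferral of $\triangle$-cases to ``the direct argument'' cannot work for $\beta\in\triangle\cap[0,\infty)$ with $q<\infty$. The test functions carry no vanishing moments, so for instance $B^{0,w}_{p,q}=\{0\}$ when $q<\infty$: already a nonzero $f\in C^\infty_c$ fails, since $\langle f,\phi^\lambda_x\rangle\to f(x)\int\phi$ is not in $L^q(d\lambda/\lambda)$. Item 1 therefore needs $\beta\notin\triangle$ or $q=\infty$ there.

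\textbf{The gap: item 4 when $\alpha_1>0$, including the mixed case $\alpha_0<0<\alpha_1$.} Write $\sigma:=|\fraks|(1/p_1-1/p_0)$.

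Putting the derivative on the mollifier, $X^K\xi_n=\xi_{n+1}*X^K\tilde\rho^{(n)}$, yields $\mfr^{n(\alpha_0-d(K))}\|X^K\xi_n/w\|_{L^{p_0}}\lesssim\mfr^{n(\alpha_0+\sigma)}\|\xi_{n+1}/w\|_{L^{p_1}}$. For $\alpha_1>0$ the norm \eqref{equiv2} controls $\|\xi_{n+1}/w\|_{L^{p_1}}$ only uniformly in $n$; for smooth $\xi$ it tends to $\|\xi/w\|_{L^{p_1}}\neq0$. Meanwhile $\alpha_0+\sigma$ may be positive (it equals $\alpha_1$ in the critical case). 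Your negative-regularity chain fails for $\alpha_0<0<\alpha_1$ for the same reason.

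Commuting the derivatives onto $\xi_{n+1}$ needs \eqref{eq:re_expand_basis}, since $X^K(g*\tilde\rho^{(n)})\neq(X^Kg)*\tilde\rho^{(n)}$. Even then it only helps for multi-indices in the $\alpha_1$-index set. For $\alpha_0<d(K)<\alpha_1$, $\|X^K\xi_{n+1}/w\|_{L^{p_1}}$ is again merely bounded while the prefactor $\mfr^{n(\alpha_0-d(K)+\sigma)}$ can grow. So this cannot be ``handled as in item 1'', where the prefactor $\mfr^{n(\beta-d(K))}$ decayed. The paper's one-line remark for $\alpha_1>0$ runs into the first of these obstructions as well.

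\textbf{How to close it.} Apply the integrability gain blockwise, to increments, whose intermediate derivatives do decay. Telescope $X^K\xi_n=X^K\xi_0+\sum_{m<n}X^K(\xi_{m+1}-\xi_m)$ and prove
\[
\Big\|\frac{X^K(\xi_{m+1}-\xi_m)}{w}\Big\|_{L^{p_0}}\lesssim\mfr^{m(d(K)+\sigma-\alpha_1)}\,b_m ,\qquad \|b\|_{\ell^q}\lesssim\|\xi\|_{B^{\alpha_1,w}_{p_1,q}} .
\]
One way is to split
\[
X^K(\xi_{m+1}-\xi_m)=(\xi_{m+2}-\xi_{m+1})*X^K\tilde\rho^{(m+1)}+\xi_{m+1}*X^K(\tilde\rho^{(m+1)}-\tilde\rho^{(m)}) .
\]
Treat the first term by Young and \eqref{eq:bounding_by_high_derivative}. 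Treat the second using the vanishing moments of $\tilde\rho^{(m+1)}-\tilde\rho^{(m)}$ and Theorem~\ref{th:taylor}, followed by a Young step on the high derivatives of $\xi_{m+1}=\xi_{m+2}*\tilde\rho^{(m+1)}$ (again via \eqref{eq:re_expand_basis}).

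Since $\alpha_0+\sigma\le\alpha_1$, one has $\mfr^{n(\alpha_0-d(K))}\mfr^{m(d(K)+\sigma-\alpha_1)}\le\mfr^{-(n-m)(d(K)-\alpha_0)}$. This is a summable geometric kernel because $d(K)>\alpha_0$ (for $\alpha_0<0$ take $K=0$). The discrete Young inequality then closes the $\ell^q$ bound.
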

\begin{proof}
The first two items are immediate from Definition~\ref{def:Besov}. 
The third is a consequence of the monotonicity property 
$$\|f\|_{B^{\alpha,w}_{p,q}} \leq \sup_{x\in \mathbb{G}} \Big(\frac{w'(x)}{w(x)}\Big) \|f\|_{B^{\alpha,w'}_{p,q}} \ ,$$
with respect to weights $w,w'$.

For the proof of Item~\ref{Item_with_num_cond}, we
 first consider the case $\alpha_1<0$.
Let $\frac{1}{p_0}+1 = \frac{1}{p_1}+ \frac{1}{p_2}$, then
\begin{equs}\label{ineq}
\Big\|\frac{f_n}{w}\Big\|_{L^{p_0}}&= \Big\|\frac{f_{n+1}*\tilde{\rho}^{(n)}}{w}\Big\|_{L^{p_0}} 
\leq \|\rho^{(n)}\|_{L^{p_2}} \Big\|\frac{f_{n+1}}{w}\Big\|_{L^{p_1}} \lesssim \mfr^{n(|\fraks|- |\fraks|/p_2) }\Big\|\frac{f_{n+1}}{w}\Big\|_{L^{p_1}} \ .
\end{equs}
Thus,
$\mfr^{n\alpha_0} \Big\|\frac{f_n}{w}\Big\|_{L^{p_0}}= \mfr^{n(|\fraks|- |\fraks|/p_2 +\alpha_0-\alpha_1) } \mfr^{n\alpha_1} \Big\|\frac{f_{n+1}}{w}\Big\|_{L^{p_1}} $ and we note that $|\fraks|- |\fraks|/p_2 +\alpha_0-\alpha_1\leq 0$ is equivalent to the numerical condition in Item~\ref{Item_with_num_cond}.

If $\alpha_1>0$, one additionally notes that $X^I f_n= f_{n+1}* X^I \tilde{\rho}^{(n)}$, and $\| X^I \tilde{\rho}^{(n)}\|_{L^{p_2}}= \mfr^{nd(I)}\| \tilde{\rho}^{(n)}\|_{L^{p_2}}$ and argues as above. 
\end{proof}

\subsection{Characterisation by Taylor remainders}

\begin{theorem}\label{prop:difference}
For $\alpha\in \mathbb{R}_+\setminus \triangle$ an equivalent norm to the one in Definition~\ref{def:Besov} is given for $f\in B^{\alpha,w}_{pq}$ by 
\begin{equ}\label{eq:difference}
\sum_{d(I)<\alpha }\Big\| \frac{X^I f}{w}\Big\|_{L^p} + \Big(\int_{|h|<1} \Big\| \frac{f(xh)-\opP^\alpha_x[f](h)}{w(x) |h|^\alpha}\Big\|^q_{L^p_x} \frac{dh}{|h|^{|\fraks|}}\Big)^{1/q} \ .
\end{equ}
\end{theorem}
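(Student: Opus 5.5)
We prove the two inequalities between \eqref{eq:difference} and $\|f\|_{B^{\alpha,w}_{pq}}$ separately. In both directions the wavelet characterisation \eqref{equiv2} of Theorem~\ref{prop:equiv} acts as the intermediary, and Lemma~\ref{prop:holder_inclusion_compact} identifies the optimal $P_x$ with the Taylor polynomial of $f$.

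\emph{Bounding \eqref{eq:difference} by the Besov norm.} The terms $\|X^If/w\|_{L^p}$ with $d(I)<\alpha$ are controlled by Lemma~\ref{prop:holder_inclusion_compact} together with \eqref{eq:technical_taylor_id}. For the remainder term we use the decomposition $f=\xi_{0}+\sum_{n\ge0}(\xi_{n+1}-\xi_n)$, with $\xi_n=f*\tilde\varphi^{(n)}$ as in Theorem~\ref{prop:equiv}. Fix $h$ with $\mfr^{-k-1}\le|h|<\mfr^{-k}$ and split the sum at $n=k$.

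For the \emph{low frequencies} $n<k$ (and for $\xi_0$) we apply Theorem~\ref{th:taylor}. This bounds $\|(g(\cdot h)-\opP^\alpha_\cdot[g](h))/w\|_{L^p}$ by $\sum_{d(I)>\alpha,|I|\le[\alpha]+1}|h|^{d(I)}\|X^Ig/w\|_{L^p}$. Here we use that $w$ is a weight and that $Q^I$ is supported in a ball of radius $\lesssim|h|\le1$, so the right translation by $z$ costs only a constant factor, exactly as in \eqref{eq:techical>0est}. We take $g=\xi_{n+1}-\xi_n$ and estimate $X^I(\xi_{n+1}-\xi_n)$ through $X^I\xi_{n+1}$.

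For the \emph{high frequencies} $n\ge k$ we bound $g(xh)$ and each term $X^Jg(x)|h|^{d(J)}$, $d(J)<\alpha$, of the Taylor polynomial separately. Using \eqref{eq:bounding_by_high_derivative}, each piece is $\lesssim\mfr^{nd(J)-nd(K)}|h|^{d(J)}\|X^K\xi_{n+1}/w\|_{L^p}$ with $d(K)>\alpha$.

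Summing, integrating over $h$ dyadically in $k$, and applying the discrete Hardy-type bound of Lemma~\ref{lem:operatorbound} (in its discrete form, with $\lambda=\mfr^{-k}$) bounds the remainder term by \eqref{equiv2}. The low-frequency sum has weights $\mfr^{(n-k)(d(I)-\alpha)}$, $n<k$, with exponent $d(I)-\alpha>0$. The high-frequency sum has weights $\mfr^{-(n-k)(\alpha-d(J))}$, $n\ge k$, with exponent $\alpha-d(J)>0$. Both exponents are strictly positive precisely because $\alpha\notin\triangle$, which is what makes the geometric sums converge.

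\emph{Bounding the Besov norm by \eqref{eq:difference}.} We choose $P_x=\opP^\alpha_x[f]$ in Definition~\ref{def:Besov}. The polynomial part is then controlled by the first sum in \eqref{eq:difference}. For the main term we write
\[
\langle f-\tilde P_x,\phi^\lambda_x\rangle=\int\big(f(xh)-\opP^\alpha_x[f](h)\big)\phi^\lambda(h)\,\dd h,
\]
so Minkowski's inequality gives
\[
\Big\|\sup_\phi\frac{|\langle f-\tilde P_x,\phi^\lambda_x\rangle|}{w(x)\lambda^\alpha}\Big\|_{L^p}\lesssim\lambda^{-\alpha-|\fraks|}\int_{|h|<\lambda}\Big\|\frac{f(\cdot h)-\opP^\alpha_\cdot[f](h)}{w}\Big\|_{L^p}\dd h .
\]
The supremum over $\phi\in\mfB_{-\alpha}$ can be taken inside because $|\phi^\lambda|\le\lambda^{-|\fraks|}\mathbf 1_{B_\lambda}$. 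The right-hand side is a Hardy-type average of $F(h):=\|(f(\cdot h)-\opP^\alpha_\cdot[f](h))/w\|_{L^p}|h|^{-\alpha}$. We apply Jensen's inequality (or Hölder) in $h$ and then Fubini over $\lambda\in(0,1)$ with measure $\dd\lambda/\lambda$. Since $\int_{|h|}^1\lambda^{-|\fraks|}\,\dd\lambda/\lambda\lesssim|h|^{-|\fraks|}$, this gives $\lesssim\|F\|_{L^q(\dd h/|h|^{|\fraks|})}$. The case $q=\infty$ is immediate.

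This second direction is elementary once one knows that $f$ has $L^p_w$ derivatives of order $<\alpha$. To make sense of $\opP^\alpha_x[f]$ when \eqref{eq:difference} is finite, we interpret \eqref{eq:difference} as presupposing that $X^If/w\in L^p$ for $d(I)<\alpha$.

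The \textbf{main obstacle} is the first direction. There one must check that the Taylor polynomial of the full $f$ splits linearly over the dyadic pieces, i.e.\ $\opP^\alpha_x[f]=\sum_n\opP^\alpha_x[\xi_{n+1}-\xi_n]+\opP^\alpha_x[\xi_0]$ in $L^p_w$. This requires the convergence of $X^I\xi_n/w$ in $L^p$ noted at the end of the proof of Theorem~\ref{prop:equiv}. One must also track the translation $x\mapsto xz$ inside the weighted $L^p$ norm; this is handled by the weight property \eqref{eq:weight} and right-invariance of Haar measure.
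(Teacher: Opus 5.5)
Your proposal is correct and follows essentially the paper's argument. Both directions go through the wavelet characterisation \eqref{equiv2}: you split at the dyadic scale of $|h|$, use Theorem~\ref{th:taylor} for the coarse part and \eqref{eq:bounding_by_high_derivative} for the fine increments, and for the converse choose $P_x=\opP^\alpha_x[f]$ followed by a Hardy-type average, where your Jensen--Fubini step is the paper's bound on $T'_\alpha$. The one difference is that the paper applies Theorem~\ref{th:taylor} directly to $\xi_{m_h}$, whose term $\mfr^{m_h(\alpha-d(I))}\|X^I\xi_{m_h}/w\|_{L^p}$ already appears in \eqref{equiv2}; your extra telescoping of the low frequencies into increments, and the geometric sum it requires, is therefore unnecessary though harmless.
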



Before turning to the proof, we define for $\gamma>0$ the following maps. For a compactly supported sequence $a_{m}: \mathbb{N}\to \mathbb{R}$, respectively 
$f\in C_c(B_{1}\setminus\{0\})$
\begin{equ}\label{operator3}
\tilde{T}_\gamma\big[\{a_m\}_m\big](h):= \sum_{\substack{m\in \mathbb{N}:\\\mfr^{-m}\leq |h|}} a_m \frac{\mfr^{-m\gamma}}{|h|^\gamma} \ ,
\qquad
{T}'_\gamma[f](\lambda):= \int f(h) \frac{|h|^\gamma}{\lambda^\gamma} \sup_{\substack{\phi\in C_{c}(B_1)\\ \| \phi\|_{L^\infty}\leq 1}} |\phi^{\lambda}(h)| dh \ .
\end{equ}
The proof of the following lemma is essentially the same as the proof of Lemma~\ref{lem:operatorbound} by interpolating the cases $q=1$ and $q=\infty$.
\begin{lemma}\label{lem:operatorbound2}\label{lem:operatorbound3}
For every $q\in [1,\infty]$, the maps defined in \eqref{operator3} extend to bounded linear maps 
$$\tilde{T}_\gamma: \ell^{q}(\mathbb{N})\to L^{q}(B_{1}, \frac{dh}{|h|^{|\fraks|}}), \qquad \text{resp.}\qquad {T}'_\gamma: L^{q}(B_{1}, \frac{dh}{|h|^{|\fraks|}})\to L^{q}((0,1), \frac{d\lambda}{\lambda})\ .$$
\end{lemma}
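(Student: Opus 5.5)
For both operators the plan is to follow the proof of Lemma~\ref{lem:operatorbound}: check the endpoints $q=1$ and $q=\infty$ by hand, and get intermediate $q$ by interpolation. Both maps are positive linear operators, so the Riesz--Thorin (or Marcinkiewicz) theorem applies once they are bounded at the endpoints. Equivalently, each can be written as an integral operator with a nonnegative kernel $K$. Schur's test then gives boundedness on every $L^q$ as soon as both $\sup_{\text{out}}\int K\,d(\text{in})$ and $\sup_{\text{in}}\int K\,d(\text{out})$ are finite. This is exactly the pair of conditions below.

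For $\tilde T_\gamma$ the kernel is $K(h,m)=\mathbf 1_{\mfr^{-m}\le|h|}\,\mfr^{-m\gamma}|h|^{-\gamma}$ on $B_1\times\mathbb N$, with counting measure on $\mathbb N$. For the $\ell^\infty$ bound, $\sup_h\sum_{m:\mfr^{-m}\le|h|}\mfr^{-m\gamma}|h|^{-\gamma}$ is a geometric series dominated by its largest term, which is at most $1$, so it is bounded by a constant depending only on $\gamma,\mfr$. For the $\ell^1$ bound we use polar coordinates for homogeneous groups, in the form $|\{h: r\le |h|<2r\}|\sim r^{|\fraks|}$, which follows from $|r\cdot E|=r^{|\fraks|}|E|$. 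This gives
$$\sup_m\int_{\mfr^{-m}\le|h|<1}\frac{\mfr^{-m\gamma}}{|h|^{\gamma}}\frac{dh}{|h|^{|\fraks|}}\lesssim \sup_m\int_{\mfr^{-m}}^1 \frac{\mfr^{-m\gamma}}{r^{\gamma}}\frac{dr}{r}\lesssim \gamma^{-1}.$$
This requires $\gamma>0$, which is assumed.

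For $T'_\gamma$, first bound $\sup_{\|\phi\|_\infty\le1,\ \mathrm{supp}\,\phi\subset B_1}|\phi^\lambda(h)|\le \lambda^{-|\fraks|}\mathbf 1_{|h|\le\lambda}$. It therefore suffices to treat the kernel $K(\lambda,h)=\mathbf 1_{|h|\le\lambda}|h|^{\gamma+|\fraks|}\lambda^{-\gamma-|\fraks|}$ relative to the measures $dh/|h|^{|\fraks|}$ and $d\lambda/\lambda$. For fixed $\lambda$, the integral $\int_{|h|\le\lambda}|h|^{\gamma}\lambda^{-\gamma-|\fraks|}dh\lesssim 1$, again by the dyadic-shell volume estimate; this is the $L^\infty$ bound. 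For fixed $h$, we have $\int_{|h|}^1 |h|^{\gamma+|\fraks|}\lambda^{-\gamma-|\fraks|}\frac{d\lambda}{\lambda}\lesssim (\gamma+|\fraks|)^{-1}$, which gives the $L^1$ bound. To make sense of the operators on all of $L^q$, define them on compactly supported data, apply the bounds, and extend by density. For $q=\infty$, where compact support is not dense, simply define the operators by the same formula, which converges absolutely by the $L^\infty$ estimate.

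The only step needing any care is the volume estimate for homogeneous shells. It follows from the scaling $|r\cdot E|=r^{|\fraks|}|E|$, the normalisation $|B_1|=1$, and the decomposition of $B_1$ into the shells $\{\mfr^{-k-1}<|h|\le\mfr^{-k}\}$. With that in hand, everything else is a repetition of Lemma~\ref{lem:operatorbound}.
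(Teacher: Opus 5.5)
Your proposal is correct and takes the same route as the paper. The paper only says the proof is that of Lemma~\ref{lem:operatorbound}: a uniform bound on the kernel sums at $q=\infty$, a Fubini bound at $q=1$, and interpolation for intermediate $q$. Your explicit kernel estimates (the bound $\sup_\phi|\phi^\lambda(h)|\le\lambda^{-|\fraks|}\mathbf{1}_{|h|\le\lambda}$ and the shell-volume computation from $|r\cdot E|=r^{|\fraks|}|E|$) supply the details the paper leaves implicit.
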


\begin{proof}[Proof of Theorem~\ref{prop:difference}]
We first show that \eqref{eq:difference} is bounded by the Besov norm. That the first summand is bounded, follows directly from Lemma~\ref{prop:holder_inclusion_compact}. To treat the second term, for $h\in B_1$
we write $m_h\in \mathbb{N}$ to be such that $|h|\in [\mfr^{-m_h-1},\mfr^{-m_h}]$.
Then, writing $f= f_{m_h} + \sum_{n\geq m_h} f_{n+1}-f_{n}$
it holds that 
\begin{align*}
\Big\| \frac{f(xh)-\opP^\alpha_x[f](h)}{w(x) |h|^\alpha}\Big\|_{L^p_x} 
&\leq \Big\| \frac{f_{m_h}(xh)-\opP^\alpha_x[f_{m_h}](h)}{w(x) |h|^\alpha}\Big\|_{L^p_x} + \sum_{n\geq {m_h}}
\Big\| \frac{[f_{n+1}-f_{n}](xh)-\opP^\alpha_x[f_{n+1}-f_{n}](h)}{w(x) |h|^\alpha}\Big\|_{L^p_x}\\
&\lesssim\Big\| \frac{f_{m_h}(xh)-\opP^\alpha_x[f_{m}](h)}{w(x) |h|^\alpha}\Big\|_{L^p_x} + \sum_{n\geq m_h} \sum_{d(I)<\alpha}\Big\| \frac{X^{I}[f_{n+1}-f_{n}]}{w(x) |h|^{\alpha-d(I)}}\Big\|_{L^p_x}
\end{align*}
For the first term we find that
\begin{align*}
&\Big\| \frac{f_{{m_h}}(xh)-\opP^\alpha_x[f_{{m_h}}](h)}{w(x) |h|^\alpha}\Big\|_{L^p_x}
\lesssim \sum_{|I|\leq [\alpha]+1, d(I)\geq \alpha} \int_{\mbG}
\Big\| \frac{X^{I}f_{m_h}(xz)}{w(x) }\Big\|_{L^p_x}
\frac{|Q^I|(h, \dd z)}{|h|^\alpha} \\
&\lesssim  \sum_{|I|\leq [\alpha]+1, d(I)\geq \alpha}
\Big\| \frac{X^{I}f_{m_h}}{w }\Big\|_{L^p_x} |h|^{d(I)-\alpha}
\lesssim\sum_{|I|\leq [\alpha]+1, d(I)\geq \alpha}
\Big\| \frac{X^{I}f_{m_h}}{w }\Big\|_{L^p_x} \mfr^{-m_{h}(d(I)-\alpha)}
\end{align*}
and thus 
 $\Big(\int_{|h|<1} \Big\| \frac{f_{m_h}(xh)-\opP^\alpha_x[f_{m_h}](h)}{w(x) |h|^\alpha}\Big\|^q_{L^p_x} \frac{dh}{|h|^{|\fraks|}}\Big)^{1/q}$ is bounded by the right-hand side of \eqref{equiv2}.
 
Next observe that by \eqref{eq:bounding_by_high_derivative} 
\begin{align*}
&\sum_{d(I)<\alpha} \Big(\int_{|h|<1} \Big(\sum_{n\geq m_h} \Big\| \frac{X^{I}[f_{n+1}-f_{n}]}{w(x) |h|^{\alpha-d(I)}}\Big\|_{L^p_x}\Big)^q\frac{dh}{|h|^{|\fraks|}}\Big)^{1/q}\\
&\lesssim
\sum_{\substack{|K|\leq [{\alpha}]+1, \\ d(K)>{\alpha}}}\sum_{d(I)< {\alpha}} \Big(\int_{|h|<1}  \Big(\sum_{n\geq m_h} \Big\|\frac{X^K f_{n+1}}{w} \Big\|_{L^p} \frac{\mfr^{n(d(I)-d(K))} }{|h|^{\alpha-d(I)}}
 \Big)^q \frac{dh}{|h|^{|\fraks|}}\Big)^{1/q}\\
&=\sum_{\substack{|K|\leq [{\alpha}]+1, \\ d(K)>{\alpha}}}\sum_{d(I)< {\alpha}}
\Big(\int_{0}^{1}\Big(
\tilde T_{\alpha-d(I)}\big[ \big\{ a^K_m\big\}_m \big](h)\Big)^q
\frac{dh}{|h|^{|\fraks|}}	\Big)^{1/q}\   
 \end{align*} 
for 
$a^K_n
:=
\Big\|\frac{X^K f_{n+1}}{w}\Big\|_{L^p}
\mfr^{n(\alpha-d(K))}$
and is thus bounded by the right-hand side of \eqref{equiv2} by Lemma~\ref{lem:operatorbound2}.
We thus conclude by Theorem~\ref{prop:equiv} that \eqref{eq:difference} is bounded by the $\|f\|_{B^{\alpha,w}_{pq}}$.

To see the reverse inequality, the argument for the polynomial part is clear. To treat the seminorm, we note that 
$$\langle f-\tilde{\opP}^\alpha_{x}, \phi^\lambda_x\rangle= \int \big(f(y)-\tilde{\opP}^\alpha_{x}[f](y)\big) \phi^\lambda(x^{-1}y) dy
= \int \big(f(xh)-\tilde{\opP}^\alpha_{x}[f](xh)\big) \phi^\lambda(h) dh
$$
and thus
\begin{align*}
\Big\|\sup_{\substack{\phi\in C_{c}(B_1)\\ \| \phi\|_{L^\infty}\leq 1}}\frac{|\langle f-\tilde{\opP}^\alpha_{x}, \phi^\lambda_x\rangle|}{w(x)\lambda^{\alpha}}\Big\|_{L^p}&\leq \int_{\mathbb{G}} \Big\|\frac{|f(xh)-{\opP}^\alpha_{x}[f](h)|}{w(x)\lambda^\alpha}\Big\|_{L^p}\sup_{\substack{\phi\in C_{c}(B_1)\\ \| \phi\|_{L^\infty}\leq 1}}|\phi^\lambda(h)| dh\\
&\leq \int\Big\|\frac{|f(xh)-{\opP}^\alpha_{x}[f](h)|}{w(x)|h|^\alpha}\Big\|_{L^p} \frac{|h|^{\alpha}}{\lambda^{\alpha}}\sup_{\substack{\phi\in C_{c}(B_1)\\ \| \phi\|_{L^\infty}\leq 1}} |\phi^{\lambda}(h)| {dh}\\
&= T'_\alpha [A](\lambda)
\end{align*}
where $T'_\alpha$ is the operator defined in \eqref{operator3} we have set $$A(h):= \Big\|\frac{|f(xh)-{\opP}^\alpha_{x}[f](h)|}{w(x)|h|^\alpha}\Big\|_{L^p} \ , $$ 
and we thus conclude by Lemma~\ref{lem:operatorbound3}

\end{proof}

\subsection{Young Multiplication}

\begin{theorem}\label{prop:young}
Let $p,p_1,p_2, q, q'\in [1,\infty]$ and $\alpha,\beta\in \mathbb{R}\setminus \triangle$ be such that $\beta>\alpha$, $\alpha+\beta>0$ and
$$
\frac{1}{p}= \frac{1}{p_1}+ \frac{1}{p_2} \ .
$$
Then, for any weights $w_1,w_2$ and $w:=w_1 w_2$ the pointwise product on smooth functions extends to a continuous bilinear map
$$
B^{\beta,w_1}_{p_1 q'}\times B^{\alpha,w_2}_{p_2q}\to B^{\alpha,w}_{pq} \ .
$$
\end{theorem}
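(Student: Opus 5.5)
The strategy is to use the wavelet-like characterisation of Theorem~\ref{prop:equiv}, Item~\ref{item2}: given smooth $f,g$, I construct a sequence $h_n$ with $h_n=h_{n+1}*\tilde\rho^{(n)}$ and bound \eqref{equiv} (or \eqref{equiv2} if $\alpha>0$) by $\|f\|_{B^{\beta,w_1}_{p_1q'}}\|g\|_{B^{\alpha,w_2}_{p_2q}}$. Density of smooth functions in the smaller space $B^{\bar\beta}\times B^{\bar\alpha}$ (with $\bar\alpha<\alpha$, $\bar\beta<\beta$ and still $\bar\alpha+\bar\beta>0$, using the first item of Corollary~\ref{prop:embedding}) then gives the extension, possibly at the cost of an arbitrarily small loss. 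Preferably I would avoid this loss by working directly with $\langle fg,\phi^\lambda_x\rangle$.

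I focus on the main case $\alpha<0<\beta$. I work directly with Definition~\ref{def:Besov} in the form \eqref{eq:Negative_Holder_Def} with dyadic scales \eqref{eq:Besov_norm_1''}. Fix $\phi\in\mfB_{-\alpha}$ and $\lambda=\mfr^{-n}$, and write
\[
\langle fg,\phi^\lambda_x\rangle=\langle g,\tilde{\opP}^{\beta}_x[f]\,\phi^\lambda_x\rangle+\langle g,(f-\tilde{\opP}^\beta_x[f])\phi^\lambda_x\rangle .
\]
For the first term, $\tilde{\opP}^\beta_x[f]\phi^\lambda_x$ is a finite combination of $X^If(x)\,\lambda^{d(I)}\psi^\lambda_x$ with $\psi\in\mfB_{-\alpha}$ (up to constants). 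Its $L^p$ norm divided by $w\lambda^\alpha$ is therefore bounded via Hölder with $1/p=1/p_1+1/p_2$ by $\|X^If/w_1\|_{L^{p_1}}$ (Lemma~\ref{prop:holder_inclusion_compact}) times the $L^{p_2}$-norm of $\sup_\psi|\langle g,\psi^\lambda_x\rangle|/(w_2\lambda^\alpha)$. The latter is the $\ell^q$-summable quantity of $g$, so this term is controlled after taking $\ell^q$ in $n$.

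The remainder term is the main obstacle, since $(f-\tilde{\opP}^\beta_x[f])\phi^\lambda_x$ is not smooth and $g$ is a distribution. Here I decompose $g=g_n+\sum_{m\ge n}(g_{m+1}-g_m)$ with $g_m=g*\tilde\varphi^{(m)}$, as in \eqref{eq:sum_dec}. Pairing against $g_n$ uses $\|g_{n+1}/w_2\|_{L^{p_2}}\lesssim\mfr^{-n\alpha}a_n$ together with $\|(f-\tilde{\opP}_x^\beta f)(x\cdot)\phi^\lambda\|$, which Theorem~\ref{prop:difference} bounds by $\lambda^\beta$ times an $\ell^{q'}$-type quantity. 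Pairing against $g_{m+1}-g_m=(g_{m+1}-g_{m+1}*\tilde\rho^{(m)})$ transfers $\rho^{(m)}$ onto the test function $F=(f-\tilde{\opP}_x^\beta f)\phi^\lambda_x$. Using the vanishing moments of $\rho$ and Taylor's theorem~\ref{th:taylor} in the integral form of Remark~\ref{rem:taytay}, I aim for $\|F-F*\rho^{(m)}\|\lesssim \mfr^{-m\beta}\lambda^{-|\fraks|}\times$(Besov data of $f$ at scale $\mfr^{-m}$), with the weight moved outside via Definition~\ref{def:weight}. This gives terms of order $\mfr^{-m\alpha}\mfr^{-m\beta}\lambda^{-\alpha}$ times the product of the sequences. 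Since $\alpha+\beta>0$, the sum over $m\ge n$ converges geometrically and is dominated by $T_{\alpha+\beta}$ applied to products of the sequences.

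Finally I sum in $\ell^q$ using Lemma~\ref{lem:operatorbound}, with the $f$-factor taken in $\ell^\infty\supset\ell^{q'}$ (Corollary~\ref{prop:embedding}, Item~\ref{prop_item:embedding}). When $\alpha>0$ the product is classical: I would use \eqref{eq:difference} and the Leibniz rule for $X^I(fg)$ combined with the Taylor remainder of a product, again via Hölder.
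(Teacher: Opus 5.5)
Your proposal is a workable argument, but it takes a genuinely different route from the paper.

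\textbf{The paper's route.} For $\alpha<0$ the paper never tests $fg$ against $\phi^\lambda_x$. It writes down candidate wavelet coefficients
\[
F_m= f\xi_m+\sum_{n\ge 1}\big((f\xi_{m+n})*\tilde\rho^{(m+n-1)}-f\xi_{m+n-1}\big)*\tilde\rho^{(m+n-2,m)} ,
\]
checks the consistency relation $F_{m+1}*\tilde\rho^{(m)}=F_m$, and bounds $\|F_m/w\|_{L^p}$. Each summand is split into two pieces: a Taylor remainder of $f$ at scale $\mfr^{-(m+n)}$ (the term $S^{(1)}$), and the non-constant Taylor coefficients of $f$ tested against a Taylor remainder of the smooth kernel $\rho^{(m,m+n-2)}$ (the term $S^{(2)}$). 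Theorem~\ref{prop:equiv}(2) then both constructs the product for arbitrary $f\in B^{\beta,w_1}_{p_1q'}$, $\xi\in B^{\alpha,w_2}_{p_2q}$ and bounds its norm. So no approximation argument is needed, and the only test functions ever Taylor-expanded are smooth kernels built from $\rho$.

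\textbf{Your route.} You take the reconstruction-theorem viewpoint. Freezing the whole polynomial $\tilde{\opP}^\beta_x[f]$ makes the germ term immediate from the definition of $\|g\|_{B^{\alpha,w_2}_{p_2q}}$ and H\"older. This replaces the paper's $A_m$, where only $f(x)$ is frozen and the higher coefficients are absorbed into $S^{(2)}$. Your remainder term, once worked out, reproduces the same $S^{(1)}$/$S^{(2)}$ dichotomy. Both routes close with a geometric factor $\mfr^{-k(\alpha+\beta)}$ in the scale offset $k$ and a discrete Young argument. For $\alpha>0$ your argument coincides with the paper's.

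\textbf{Points that need care.} None of the following is fatal, but each needs fixing.

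(i) The bound on $F-F*\rho^{(m)}$, with $F=(f-\tilde{\opP}^\beta_x[f])\phi^\lambda_x$, cannot hold uniformly in $x$, since $f$ is only controlled in weighted $L^{p_1}$. It must be carried out inside the $L^p_x$ norm via Minkowski and H\"older. Moreover, Taylor-expanding the product $F$ at $y$ produces, besides $f(yh)-\opP^\beta_y[f](h)$, cross terms. In these, the Taylor coefficients of $f-\tilde{\opP}^\beta_x[f]$ at $y$, i.e.\ combinations of $X^Jf(y)-\tilde{\opP}^{\beta-d(J)}_x[X^Jf](y)$, multiply $\eta^I(h)\big(\phi^\lambda_x(yh)-\opP^{\beta-d(I)}_y[\phi^\lambda_x](h)\big)$. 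These coefficients are remainders of $X^Jf$ over displacements of size $\lambda$, not $\mfr^{-m}$. So the data are not "at scale $\mfr^{-m}$" as you state. You need Theorem~\ref{prop:difference} for $X^Jf\in B^{\beta-d(J),w_1}_{p_1q'}$, combined with the gain $(|h|/\lambda)^M$, $d(I)+M>\beta$, from the test function. This is your analogue of $S^{(2)}$.

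(ii) That gain requires Taylor-expanding $\phi$ to order $\beta$. However, $\phi\in\mathfrak{B}_{-\alpha}$ only controls derivatives up to degree $-\alpha+\max_i\fraks_i$, which can be much smaller than $\beta$. The cleanest fix is to test only against $\varphi^{(n)}_x$. This suffices by Theorem~\ref{prop:equiv}(2), and it also removes the supremum over $\phi$.

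(iii) The loss you anticipate in the extension step is avoidable, and density of smooth functions fails anyway for $q=\infty$. Apply your a priori bound to smoothings of $f,g$ that are bounded uniformly in $B^{\beta,w_1}_{p_1q'}\times B^{\alpha,w_2}_{p_2q}$ and converge in $B^{\bar\beta,w_1}_{p_1q'}\times B^{\bar\alpha,w_2}_{p_2q}$. The products then converge in $\mathcal{D}'$, and the $B^{\alpha,w}_{pq}$ bound passes to the limit by Fatou.
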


\begin{proof}
We first consider the easier case $\alpha>0$ using the characterisation of Theorem~\ref{prop:difference}. One readily sees that 
$$\sum_{d(K)<\alpha}
\Big\| \frac{X^K (f_1f_2)}{w}\Big\|_{L^p} \lesssim \sum_{d(I),d(J)<\alpha } \Big\| \frac{X^I f_1}{w_1}\Big\|_{L^{p_1}} \Big\| \frac{X^J f_2}{w_2}\Big\|_{L^{p_2}} \ .
$$
To estimate the seminorm, set $P_x(h):=\opP_x^{\alpha}[f_1](h)\opP_x^{\alpha}[f_2](h)$. Then
\begin{equ}\label{eq:minus_pol}
\big(f_1f_2\big)(xh)-  P_x(h)= 
\big(f_1(xh)-\opP_x^{\alpha}[f_1](h)\big) f_2(xh) +  \opP_x^{\alpha}[f_1](h)\big(
f_2(xh)-  \opP_x^{\alpha}[f_2](h)\big)
\end{equ}
and proceeds using H\"older's inequality. We conclude by noting that the right-hand side of \eqref{eq:minus_pol} satisfies the desired bound, and while $P_x(h)$ does not have homogeneous degree bounded by $\alpha$, its truncation to order $\alpha$ satisfies the same estimate.

We turn to the case $\alpha\leq 0$. Write $f:=f_1$ and $\xi:= f_2$.  
Suppose both involved distributions are actually smooth functions, then
\begin{align*}
(f\xi)_m&= f(\xi)_m + \sum_{n=1}^{N} \Big((f\xi_{m+n} )*\tilde{\rho}^{(m+n-1)} -f \xi_{m+n-1}  \Big) *\tilde{\rho}^{(m+n-2,m)}
\\ &\qquad + \Big((f\xi)_{m+N}- f\xi_{m+N}\Big) * \tilde{\rho}^{(N-1+m,m)} \ .
\end{align*}
Since in this case the last term vanishes as $N\to \infty$, we make the ansatz
\begin{equ}\label{eq:summands}
F_m:=  f(\xi)_m + \sum_{n=1}^{\infty} \Big((f\xi_{m+n} )*\tilde{\rho}^{(m+n-1)} -f \xi_{m+n-1}  \Big) *\tilde{\rho}^{(m+n-2,m)} \ .
\end{equ}
Assuming the series converges in $\mathcal{D}'$, we see that
\begin{equs}
F_{m+1}*\tilde{\rho}^{(m)}&=  \Big(f(\xi)_{m+1} + \sum_{n=1}^{\infty} \Big((f\xi_{m+1+n} )*\tilde{\rho}^{(m+n)} -f \xi_{m+n}  \Big) *\tilde{\rho}^{(m+n-1,m+1)} \Big)*\tilde{\rho}^{(m)}\\
&=  (f(\xi)_{m+1})*\tilde{\rho}^{(m)} + \sum_{n=1}^{\infty} \Big((f\xi_{m+1+n} )*\tilde{\rho}^{(m+n)} -f \xi_{m+n}  \Big) *\tilde{\rho}^{(m+n-1,m)} \\
&=  (f(\xi)_{m+1})*\tilde{\rho}^{(m)} + \sum_{n=1}^{\infty} (f\xi_{m+1+n} )*\tilde{\rho}^{(m+n,m)} -f \xi_{m+n}*\tilde{\rho}^{(m+n-1,m)} \\
&= (f(\xi)_{m})*\tilde{\rho}^{(m)} + \sum_{n=0}^{\infty} (f\xi_{m+1+n} )*\tilde{\rho}^{(m+n,m)} -f \xi_{m+n}*\tilde{\rho}^{(m+n-1,m)} \\
&= F_m \ .
\end{equs}
We next show that the series \eqref{eq:summands} indeed converges and that $\Big\|\mfr^{ m\alpha} \Big \| \frac{F_m}{w}\Big\|_{L^p}\Big\|_{\ell^{q}} \lesssim \| f\|_{B^{\beta,w_1}_{p_1 q'}}\| \xi\|_{B^{\alpha,w_2}_{p_2 q}}$. This will conclude the proof by Theorem~\ref{prop:equiv}.

We assume without loss of generality that $q'=\infty$ in view of Corollary~\ref{prop:embedding}, Item~\ref{prop_item:embedding}.
Observe that we immediately have 
\begin{equ}\label{main_cont}
A_m:=\Big \| \frac{f(\xi)_m}{w}\Big\|_{L^p}  \leq  \Big \| \frac{f}{w_1 }\Big\|_{L^{p_1}}   \Big \| \frac{\xi_m}{ w_2}\Big\|_{L^{p_2}} \ .
\end{equ}
It remains to bound the summands in \eqref{eq:summands}
$$
S_{mn}(z):=
\Big((f\xi_{m+n} )*\tilde{\rho}^{(m+n-1)} -f \xi_{m+n-1}  \Big) *\tilde{\rho}^{(m+n-2,m)}(z) \ .
$$
Since
\begin{align*}
(f\xi_{m+n} )*\tilde{\rho}^{(m+n-1)}(x) -f \xi_{m+n-1}(x)&=
\int f(y)\xi_{m+n}(y) {\rho}^{(m+n-1)}_x(y)dy -f(x) \int \xi_{m+n}(y){\rho}^{(m+n-1)}_x(y) dy\\
&= \int \big[f(y)-f(x) ]\xi_{m+n}(y) {\rho}^{(m+n-1)}_x(y)dy\\
\end{align*}
Thus using the convention ${\rho}_z^{(m,m-1)}:=1$ 
\begin{align*}
S_{mn}(z)&= \int_{\mathbb{G}} \int_{\mathbb{G}} \big[f(y)-f(x) \big] {\rho}^{(m+n-1)}_x(y) {\rho}_z^{(m,m+n-2)}(x)  \xi_{m+n}(y)dx dy \\
&= \int_{\mathbb{G}} \int_{\mathbb{G}} \big[\opP^\beta_y[f](y^{-1}x)- f(x)\big]
 {\rho}^{(m+n-1)}_x(y) {\rho}_z^{(m,m+n-2)}(x)  \xi_{m+n}(y)dx dy\\
&\qquad -\int_{\mathbb{G}} \int_{\mathbb{G}} \big[\opP^\beta_y [f](y^{-1}x) -f(y)\big] {\rho}^{(m+n-1)}_x(y) {\rho}_z^{(m,m+n-2)}(x)  \xi_{m+n}(y)dx dy\\
&=: S^{(1)}_{mn}(z)+S^{(2)}_{mn}(z)
\end{align*}

We note that $S^{(1)}_{mn}(z)= F_{mn}*\tilde{\rho}^{(m+n-2,m)}(z)$ for 
\begin{align*}
F_{mn}&= 
\int \big[\opP^\beta_y[f](y^{-1}x)- f(x)\big]\xi_{m+n}(y) {\rho}^{(m+n-1)}_x(y)dy\\
&= 
\int \big[\opP^\beta_{xh^{-1}}[f](h)- f(x)\big]\xi_{m+n}(xh^{-1}) {\rho}^{(m+n-1)}(h^{-1})dh
\end{align*}
Thus $\Big\|\frac{S^{(1)}_{mn}}{w}\Big\|_{L^p}\lesssim \Big\|\frac{F_{mn}}{w}\Big\|_{L^p}$ and
\begin{equs}
\Big\|\frac{F_{mn}}{w}\Big\|_{L^p}
&\leq \int_{\mathbb{G}} \Big\|\frac{\big[\opP^\beta_{xh^{-1}}[f](h)- f(x)\big]\xi_{m+n}(xh^{-1})}{w(x)}\Big\|_{L^{p}_x} \, | {\rho}^{(m+n-1)}(h^{-1}) | \, dh\\
&\lesssim
\int_{\mathbb{G}} \Big\|\frac{\big[\opP^\beta_{x}[f](h)- f(xh)\big]\xi_{m+n}(x)}{w(x)}\Big\|_{L^{p}_x} \, | {\rho}^{(m+n-1)}(h^{-1}) | \, dh\\
&\leq \int_{\mathbb{G}} \Big\|\frac{f(xh)-\opP^\beta_x [f](h)}{w_1(x)} \Big\|_{L_x^{p_1}}  \Big\| \frac{\xi_{m+n}(xh)}{w_2(x)}\Big\|_{L^{p_2}_x} \, | \tilde{\rho}^{(m+n-1)}(h)  |\, dh\\
&\leq\Big\| \frac{\xi_{m+n}}{w_2}\Big\|_{L^{p_2}_x}    \int_{\mathbb{G}} \Big\|\frac{f(xh)-\opP^\beta_x [f](h)}{w_1} \Big\|_{L_x^{p_1}} \, | \tilde{\rho}^{(m+n-1)}(h) |\, dh \ .
\end{equs}
Since
\begin{align*}
 &\int_{\mathbb{G}} \Big\|\frac{f(xh)-\opP^\beta_x [f](h)}{w_1} \Big\|_{L_x^{p_1}}\, |  \tilde{\rho}^{(m+n-1)}(h)| \, dh 
 \lesssim \mfr^{-(m+n)\beta} \|f \|_{B^{\beta,w_1}_{p_1 q'}}
\end{align*}
it follows that 
\begin{equ}\label{remainder}
B_{m}:=\sum_{n} \Big\|\frac{F_{mn}}{w}\Big\|_{L^p} 
\leq  \|f\|_{B^{\beta,w_1}_{p_1 q'}} \sum_{n} \mfr^{-(m+n)\beta} \Big\| \frac{\xi_{m+n}}{w_2}\Big\|_{L^{p_2}_x}
\lesssim \mfr^{-(\beta+\alpha)m} \| \xi\|_{B^{\alpha,w_2}_{p_2q}} \|f\|_{B^{\beta,w_1}_{p_1 q'}} \ .
\end{equ}
%
%
%
%
For the term $S^{(2)}_{mn}$
we note that $S^{(2)}_{m1}(z)=0$, while for $n\geq 2$
\begin{align*}
&S^{(2)}_{mn}(z)\\
&= \int_{\mathbb{G}} \int_{\mathbb{G}} \big[\opP^\beta_y [f](h) -f(y)\big] {\rho}^{(m+n-1)}(h^{-1}) {\rho}_z^{(m,m+n-2)}(yh)  \xi_{m+n}(y)dh dy\\
 &= \sum_{0<d(I)<\beta} 
\int_{\mathbb{G}} \int_{\mathbb{G}} c_{I}(y)\eta^{I}(h)  {\rho}^{(m+n-1)}(h^{-1}) {\rho}_z^{(m,m+n-2)}(yh)  \xi_{m+n}(y)dh dy\\
 &= \sum_{0<d(I)<\beta} 
\int_{\mathbb{G}} \int_{\mathbb{G}} c_{I}(y)\eta^{I}(h)  {\rho}^{(m+n-1)}(h^{-1}) \Big[{\rho}_z^{(m,m+n-2)}(yh)-\opP^{\beta-d(I)}_y[{\rho}_z^{(m,m+n-2)}](h) \Big]   \xi_{m+n}(y)dh dy
\end{align*}
Noting that 
$$
|\eta^{I}(h)|\Big|{\rho}_z^{(m,m+n-2)}(yh)-\opP^{\beta-d(I)}_y[{\rho}_z^{(m,m+n-2)}](h) \Big|\lesssim \mathbf{1}_{|z^{-1}y|<\mfr^{-m}} \sum_{M>\beta}|h|^{M}\mfr^{m(M+|\fraks|)}
$$
where the sum over $M$ runs over some finite subset of $\mathbb{R}$, and setting 
$$
G^{I}_{mn}(y):=
\int_{\mathbb{G}} \Big|c_{I}(y) {\rho}^{(m+n-1)}(h^{-1})  \Big[\sum_{M>\beta}|h|^{M}\mfr^{mM}\Big]  \xi_{m+n}(y)\Big|
dh
\lesssim \mfr^{-\beta n} |c_{I}(y) \xi_{m+n}(y) | 
$$
it follows that $$\Big\|\frac{S^{(2)}_{mn}}{w}\Big\|_{L^p}\lesssim \sum_{0<d(I)<\beta} \Big\|\frac{G^I_{mn}}{w}\Big\|_{L^p}\lesssim \mfr^{-\beta n}  \sum_{{0<d(I)<\beta}}\Big\|\frac{c_I}{w_1}\Big\|_{L^{p_1}}\Big\|\frac{ \xi_{m+n}}{w_2}\Big\|_{L^{p_2}}
\lesssim 
\mfr^{-\beta n}  \| f\|_{B^{\beta,w_1}_{p_1 q'}}  \Big\|\frac{ \xi_{m+n}}{w_2}\Big\|_{L^{p_2}} \ ,
 $$
where we have used that the Taylor coefficients are linear combinations of derivatives. Thus,
\begin{equ}\label{eq:s2etimate}
 C_m:=\sum_{n}\Big\|\frac{S^{(2)}_{mn}}{w}\Big\|_{L^p} \lesssim 
 \mfr^{-m \alpha }\| f\|_{B^{\beta,w_1}_{p_1 q'}} \sum_{n} \mfr^{-(\alpha+\beta) n}  \Big[\mfr^{\alpha(n+m)} \Big\|\frac{ \xi_{m+n}}{w_2}\Big\|_{L^{p_2}}\Big] \ .
\end{equ}
Thus it remains to bound the $\ell^{q}$-norm
of 
$
\big(\mfr^{\alpha m}A_m\big)_{m\in \mathbb{N}}
$,
$\big(\mfr^{\alpha m}B_m\big)_{m\in \mathbb{N}}$
and
$
\big(\mfr^{\alpha m}C_m\big)_{m\in \mathbb{N}}$  from 
 \eqref{main_cont},
 \eqref{remainder} and 
\eqref{eq:s2etimate} respectively.
The desired estimate on the first and second term follow directly, while for the last term it follows using the discrete Young inequality.
\end{proof}

\subsection{Smoothing properties of singular kernels}

\begin{assumption}\label{ass:kernel}
For $T>0$, assume that we are given kernels $K_t: \mathbb{G}\setminus\{e\}\to \mathbb{R}$ for $t>0$ satisfying for some $\kappa>0$ and $m>1$
\begin{equ}\label{eq:ass_upper}
|\partial^j_t X^I K_t (x)|\lesssim_{I,j,T} {t^{-\frac{|\fraks|+d(I)}{m}-j }}\exp\big(-\kappa (|x|^m/t)^{1/(m-1)}\big) 
\end{equ}
uniformly over $x\in \mathbb{G}$ and $t\in (0,T]$.
Furthermore, assume that $\int K_t =1$ as well as 
\begin{equ}\label{eq:ass_poly}
  \sup_{0<t\le T}
  \left|
  \partial_t \int_{\mathbb G} P(y)K_t(y)\,dy
  \right|
  <\infty \ 
\end{equ}
for all $P\in \mathcal{P}_m$.
\end{assumption}

\begin{remark}
Note that by \eqref{eq:re_expand_basis} this bound immediately implies the same bound for $K$ replaced by $\tilde{K}$ at the expense of slightly reducing the value of  $\kappa$.
\end{remark}
\begin{remark}\label{rem:heat_semi}
It is well known that for instance heat semigroups associated to positive Rockland operators satisfy this assumption with $m$ the degree of the differential operator, we refer to \cite[Thm.~10]{Nice_noteDHZ94} and \cite[Prop.~5.5]{Elst_Robinson} for the bound \eqref{eq:ass_upper}. 
The bound \eqref{eq:ass_poly} follows using the associated PDE.
\end{remark}
For a kernel satisfying Assumption \ref{ass:kernel} and any sufficiently integrable function $f$ we set
\begin{equ}\label{eq:kernel_operator}
 P_tf(x)= (f*K_t)(x)= \int K_t(y^{-1}x) f(y)dy\ , 
 \end{equ}
and extend it to distributions in the usual way.
In the sequel we shall make the following assumption on the weight $w$: for every $\kappa'>0$
\begin{equ}\label{assump:weight}
 \sup_{y\in \mathbb{G}} \int \exp\big(-\kappa' |y^{-1}x|^{m/(m-1)}\big) \Big(\frac{ w(y) }{w(x)} +\frac{ w(x) }{w(y)}\Big) dx  <\infty \ .
\end{equ}


\begin{prop}\label{prop:schauder}
Let $T>0$, $w$ be a weight satisfying \eqref{assump:weight} and $p,q\in [1,\infty]$.
Then, for every $\alpha,\beta\notin \triangle$, $\alpha<\beta$ and $\gamma\in (0,m)$ such that $\beta+\gamma\notin\triangle$ 
$$\|P_t f\|_{B^{\beta, w}_{p,q}}\lesssim t^{-\frac{\beta-\alpha}{m}} \|f\|_{B^{\alpha, w}_{p,q}} \ , \qquad \|P_t f-f\|_{B^{\beta, w}_{p,q}}\lesssim t^{\frac{\gamma}{m}} \|f\|_{B^{\beta+\gamma, w}_{p,q}},$$
uniformly over $f\in B^{\alpha, w}_{p,q}$ and
 $t \in (0,T]$.
\end{prop}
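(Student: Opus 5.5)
We will work entirely with the multiscale characterisation of Theorem~\ref{prop:equiv}, writing $f_n = f*\tilde\varphi^{(n)}$. The key structural observation is that convolution is associative, so
$$(P_t f)_n = (f*K_t)*\tilde\varphi^{(n)} = f*(K_t*\tilde\varphi^{(n)})\ .$$
Moreover, the relation $\xi_n=\xi_{n+1}*\tilde\rho^{(n)}$ is preserved by $P_t$ on the appropriate side. We would rather write $P_tf$ as a right convolution operator commuting with the left-invariant vector fields and the mollification in the appropriate order. Concretely, since $X^I(\psi*\phi)=\psi*X^I\phi$, the left-invariant derivatives fall on the kernel. So the plan is to estimate $X^K (P_tf)_n$, or $(P_tf)_{n+1}$ for negative regularity, in weighted $L^p$ by expressing it as a convolution of a lower-scale block of $f$ with a smoothed derivative of $K_t$.

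\textbf{Step 1 (weighted Young inequality).} Using \eqref{assump:weight} together with \eqref{eq:ass_upper}, we first show
$$\|(g*k)/w\|_{L^p}\lesssim \|g/w\|_{L^p}\int |k(y)|\sup_{x}\frac{w(xy)}{w(x)}\,dy\ .$$
The effective bound is obtained by splitting the integral over unit-scale pieces of the kernel. The Gaussian-type decay $\exp(-\kappa(|x|^m/t)^{1/(m-1)})$ beats the weight, since $w$ is comparable on unit balls and \eqref{assump:weight} is exactly what makes this summable. For $t\le T$ the kernel decay is at least as good as at $t=T$, so all constants are uniform.

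\textbf{Step 2 (first bound).} For $\alpha<0$ and the $n$-th block with $\mfr^{-n}\lesssim t^{1/m}$, we use Taylor/moment cancellation. We write $(P_tf)_{n+1} = f_{n+1+k}*(\dots)$, or rather transfer derivatives via $(X^I\psi)*\phi=\psi*Y^I\phi$ and \eqref{eq:re_expand_basis}, so that derivatives fall onto $K_t$ and each derivative of homogeneous degree $d$ costs $t^{-d/m}$. For $\beta<0$ it suffices to write
$$(P_tf)_{n+1}=f_{N}*\big(\tilde\rho^{(N-1,n+1)}\text{-type factors}\big)*K_t\ ,$$
with $N$ chosen so that $\mfr^{-N}\approx t^{1/m}\wedge \mfr^{-n}$. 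Scales finer than $t^{1/m}$ gain $(\mfr^{-n}t^{-1/m})^{R}$ by the vanishing moments of $\rho$ (Assumption~\ref{ass:pair}, $R$ large), applied against the smooth kernel. Coarser scales are simply bounded by $\|f_{n+1}/w\|_{L^p}$. Summing, via the discrete operator bound Lemma~\ref{lem:operatorbound}-type argument, gives $\mfr^{n\beta}\|(P_tf)_{n+1}/w\|\lesssim t^{-(\beta-\alpha)/m}\sum_k c_{n,k}\,\mfr^{k\alpha}\|f_{k+1}/w\|$ with an $\ell^q$-bounded kernel $c$. For $\beta>0$, we instead estimate $X^K(P_tf)_n = f_n * X^K K_t$ (or $(Y$-shifted version$)$), decomposing $f_n$ into $f_0$ plus increments $f_{j+1}-f_j$ and using $\|X^KK_t\|_{L^1_w}\lesssim t^{-d(K)/m}$ on coarse pieces and moving derivatives onto $f_j$ on fine pieces.

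\textbf{Step 3 (second bound).} We write
$$P_tf-f=\int_0^t \partial_s P_s f\,ds\ .$$
The moment condition \eqref{eq:ass_poly} together with $\int K_t=1$ ensures that $K_t-\delta$ annihilates polynomials of degree $<\gamma$ up to $O(t)$ errors. Block-wise, for $\mfr^{-n}\ge t^{1/m}$ we Taylor expand $f_n$ via Theorem~\ref{th:taylor}, giving the gain $(t^{1/m}\mfr^n)^{\gamma}$. For $\mfr^{-n}<t^{1/m}$ we use the trivial bound together with the regularity gap $\gamma$. Alternatively, interpolating the first bound with the derivative estimate $|\partial_t K_t|$ from \eqref{eq:ass_upper} (with $j=1$, which costs $t^{-1}$, i.e.\ $m$ derivatives) gives the result for $\gamma<m$.

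The main obstacle is Step 2 for positive target regularity: keeping track of left versus right invariance when commuting derivatives past the kernel (the $p_{ij}$ polynomial coefficients in \eqref{eq:re_expand_basis}) while preserving weighted $L^p$ control. We would handle the polynomial coefficients by absorbing them into the kernel's exponential decay at the cost of a slightly smaller $\kappa$.
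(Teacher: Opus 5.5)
You have the right ingredients: the multiscale norm of Theorem~\ref{prop:equiv}, cancellation of $\rho$ against a kernel that is smooth at scale $t^{1/m}$, a weighted Young bound from \eqref{assump:weight}, and \eqref{eq:ass_poly} together with $P_tf-f=\int_0^t\partial_sP_sf\,ds$. However, Steps~2 and~3 rest on Euclidean bookkeeping that is false on a non-abelian $\mathbb{G}$. The blocks of $P_tf$ are $(P_tf)_n=f*K_t*\tilde\varphi^{(n)}$. Convolution on $\mathbb{G}$ is not commutative, so $K_t*\tilde\varphi^{(n)}\neq\tilde\varphi^{(n)}*K_t$ and hence $(P_tf)_n\neq f_n*K_t$. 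The following are therefore all unjustified:
\begin{itemize}
\item the identity $X^K(P_tf)_n=f_n*X^KK_t$;
\item the representation $(P_tf)_{n+1}=f_N*(\cdots)*K_t$;
\item the block-wise Taylor expansion of $f_n$ used for $(P_tf-f)_n$.
\end{itemize}
Your structural claim is also wrong. The map $f\mapsto f*K_t$ commutes neither with $g\mapsto g*\tilde\varphi^{(n)}$ nor with left-invariant fields, since $X^I(f*K_t)=f*X^IK_t$ whereas $(X^If)*K_t=f*Y^IK_t$. Nor does the sequence $f_n*K_t$ satisfy $\xi_n=\xi_{n+1}*\tilde\rho^{(n)}$: one gets $f_{n+1}*\tilde\rho^{(n)}*K_t$ on one side and $f_{n+1}*K_t*\tilde\rho^{(n)}$ on the other. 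So it cannot be fed into Item~\ref{item2} of Theorem~\ref{prop:equiv} either. The obstacle you flag, absorbing the polynomial coefficients of \eqref{eq:re_expand_basis}, concerns where derivatives land. It does not address the fact that $K_t$ cannot be moved past $\tilde\varphi^{(n)}$ at all.

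The missing idea is to estimate $X^L(P_tf)_k=f*(K_t*X^L\tilde\varphi^{(k)})$ by pairing $f$ with the \emph{combined} kernel, never commuting its factors. The paper does this as follows.
\begin{itemize}
\item It splits $K_t=K^{(+)}_t+K^{(-)}_t$ and cuts $K^{(+)}_t$ into space-time dyadic pieces $K^{(n)}_t$.
\item It regards $\Psi^{k,n}_t=K^{(n)}_t*\tilde\varphi^{(k)}$ and $\Xi^{(k)}_t$ as test functions at scales $\mfr^{-(k\wedge n)}$, resp.\ $\mfr^{-k}$, to which the Besov norm of $f$ applies directly.
\item It handles the smooth far field via $(f_{n+1}-f_n)*K^{(-)}_t=f_{n+1}*(K^{(-)}_t-\tilde\rho^{(n)}*K^{(-)}_t)$ and a Taylor expansion of $\tilde K^{(-)}_t$ (Lemma~\ref{lem:largeSchauder}).
\item It obtains the second bound from the analogous pieces of $\partial_sK^{(+)}_s$, controlled with \eqref{eq:ass_poly} and integrated in $s$.
\end{itemize}
Your idea of decomposing $f$ rather than the kernel can be repaired in the same spirit. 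Keep $G_k:=K_t*X^L\tilde\varphi^{(k)}$ intact and insert $f=f_0+\sum_jf_{j+1}*(\delta-\tilde\rho^{(j)})$ to its left. The cancellation of $\rho$ then acts on $G_k$ through left translations, i.e.\ right-invariant derivatives. This is where your polynomial-coefficient bookkeeping is genuinely needed, to see that $G_k$ is smooth at scale $t^{1/m}\vee\mfr^{-k}$. Done this way, you obtain the bound with an $\ell^q$-bounded kernel $c_{k,j}$ that you state. It would be a legitimate alternative to the paper's kernel decomposition, treating near and far field at once, but it has to be carried out in this form and not via $f_n*K_t$.
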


\begin{remark}
Note that \eqref{assump:weight} holds locally uniformly in $a, \ell\in \mathbb{R}$ for the weights $p_a$ and $e_{\ell}$ in \eqref{eq:weights_examle}. In this case the implicit constants in Proposition~\ref{prop:schauder} can also be chosen locally uniformly in $a, \ell\in \mathbb{R}$.
%
%

\end{remark}


In order to separate regularity and integrability considerations write
$$ K_t(x)= K^{(+)}_t(x) + K^{(-)}_t(x) \ ,$$
where for fixed $\phi\in C^\infty_c \big((-1,1)\times B_{1})$ such that $\phi=1$ for $(t,x)\in (-1/2,1/2)\times B_{1/2}$,
$$K^{(+)}_t(x):= \phi(t,x)K_t(x), \qquad K^{(-)}_t(x):= (1-\phi(t,x))K_t(x)\ .$$
We also write $P^{(+)}_t$, resp.\ $P^{(-)}_t$ for the operator defined analogously to \eqref{eq:kernel_operator}. 
Then, the proof is a direct consequence of Lemma~\ref{lem:largeSchauder} and Lemma~\ref{lem:smallSchauder} below.

\begin{lemma}\label{lem:largeSchauder}
In the setting of Proposition~\ref{prop:schauder} it holds that
$$\|P^{(-)}_t f\|_{B^{\beta, w}_{p,q}}\lesssim \|f\|_{B^{\alpha, w}_{p,q}}\ , \qquad\|\partial_t P^{(-)}_t f\|_{B^{\beta, w}_{p,q}}\lesssim \|f\|_{B^{\alpha, w}_{p,q}} $$
uniformly over $t \in (0,T]$.
\end{lemma}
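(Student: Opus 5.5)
The idea is that $K^{(-)}_t$ is smooth and rapidly decaying uniformly in $t\in(0,T]$. The cutoff removes the singular region near $(t,x)=(0,e)$. On the support of $1-\phi$ we either have $|x|\geq 1/2$, where \eqref{eq:ass_upper} gives $|\partial_t^j X^I K_t(x)|\lesssim t^{-N}\exp(-\kappa(|x|^m/t)^{1/(m-1)})$. Since $|x|^m/t\gtrsim 1/t$, this is bounded by $C\exp(-\kappa'|x|^{m/(m-1)})$ uniformly in $t\in(0,T]$, after absorbing the polynomial factor $t^{-N}$ into half the exponential. Otherwise $t\geq 1/2$, where all derivatives are trivially bounded by the same Gaussian-type profile. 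Derivatives falling on the smooth cutoff are harmless. So I first record
\begin{equ}
\sup_{t\in(0,T]}|\partial_t^jX^IK^{(-)}_t(x)|+|\partial_t^jY^IK^{(-)}_t(x)|\lesssim \exp(-\kappa'|x|^{m/(m-1)})
\end{equ}
for $j\in\{0,1\}$ and all $I$, with the $Y^I$ bound obtained via \eqref{eq:re_expand_basis}. The same holds for $\tilde K^{(-)}_t$.

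Next I use the wavelet characterisation, Theorem~\ref{prop:equiv}. It suffices to treat $\beta>0$, since $B^{\beta'}\subset B^{\beta}$ for $\beta'>\beta$ by Corollary~\ref{prop:embedding}; fix $\beta\notin\triangle$ with $\beta>0$. Write $g=P^{(-)}_tf=f*K^{(-)}_t$ (or $f*\partial_tK^{(-)}_t$). Then $g_n=g*\tilde\varphi^{(n)}$ and, for all $n\ge 0$,
\begin{equ}
X^K g_n = f*\big(K^{(-)}_t*X^K\tilde\varphi^{(n)}\big)=(X^K f)*K^{(-)}_t*\tilde\varphi^{(n)}\ ,
\end{equ}
where the last equality uses the convolution identities from Section~\ref{subsec:distributions_convolutions}. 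The key is to push all derivatives onto the kernel: $X^Kg = f*X^K K^{(-)}_t$. I then write $f*\Psi$, for a smooth rapidly decaying $\Psi$, as $f_{1}*\tilde\rho^{(0)}*\Psi$-type expressions. Concretely, $f*\Psi(x)=\langle f,\tilde\Psi(x^{-1}\cdot)\rangle$. I decompose $\tilde\Psi$ by a partition of unity into pieces $\psi_k$ supported in unit balls $B_1(z_k)$, with $\|\psi_k\|_{C^N}\lesssim \exp(-\kappa'|z_k|^{m/(m-1)})$.

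Each $\langle f,\psi_{k,x}\rangle$ is a test against a $\mfB_{-\alpha}$-type function at scale $\lambda\sim1$ centered at $xz_k$. By Remark~\ref{rm:changing_scales} the scale-one part of the Besov norm controls $\|\sup_\phi|\langle f,\phi_{y}\rangle|/w(y)\|_{L^p_y}\lesssim\|f\|_{B^{\alpha,w}_{p,q}}$. Summing over $k$ and using the weight property to compare $w(x)$ with $w(xz_k)$ then gives
\begin{equ}
\Big\|\frac{X^K g_n}{w}\Big\|_{L^p}\lesssim \|f\|_{B^{\alpha,w}_{p,q}}\sum_k e^{-\kappa'|z_k|^{m/(m-1)}}\sup_{|z|\lesssim |z_k|}\frac{w(xz)}{w(x)}\ .
\end{equ}
The sum is finite by \eqref{assump:weight}, which is exactly why that assumption is imposed. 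Crucially, this bound is uniform in $n$ and in all $K$, including $K$ of arbitrarily large degree. Hence in \eqref{equiv2} each term $\mfr^{nq(\beta-d(K))}\|X^Kg_n/w\|^q_{L^p}$ is summable, because $d(K)>\beta$. The $\|g_0/w\|_{L^p}$ term is the case $K=0$.

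If $\alpha>0$ one additionally has to handle the polynomial part of $f$. Since only scale-one tests are used and $f\in B^{\alpha}\subset B^{\alpha'}$ for some negative $\alpha'$, this can be avoided altogether by first embedding. The main obstacle I anticipate is the bookkeeping of the weight in the summation over translates $z_k$: $w(xz_k)/w(x)$ versus the left/right conventions of Definition~\ref{def:weight} and \eqref{assump:weight}. This is handled by bounding the ratio by iterating \eqref{eq:weight} along unit steps, and absorbing that growth into the integral in \eqref{assump:weight} via the doubling of the exponent $\kappa'\to\kappa'/2$.
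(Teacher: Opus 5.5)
Your route is sound in its main idea, but it differs from the paper's. The paper never decomposes the kernel.

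\begin{itemize}
\item For $\alpha>0$ it applies a weighted Young (Schur) inequality to $f*X^IK^{(-)}_t$ using $\|f/w\|_{L^p}$. The two Schur conditions are exactly the two ratios in \eqref{assump:weight}.
\item For $\alpha<0$ it writes $f=f_0+\sum_n(f_{n+1}-f_n)$ and uses $P^{(-)}_t(f_{n+1}-f_n)=f_{n+1}*(K^{(-)}_t-\tilde\rho^{(n)}*K^{(-)}_t)$. The vanishing moments of $\rho$ (via Theorem~\ref{th:taylor}) give a factor $\mfr^{-nM}$, which is then summed against $\mfr^{n\alpha}\|f_{n+1}/w\|_{L^p}$.
\item The output is a bound on $\|X^JP^{(-)}_tf/w\|_{L^p}$ for every $J$.
\end{itemize}

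You instead cut the smooth kernel into unit-scale translated test functions and use only the scale-one part of the Besov norm of $f$ (Remark~\ref{rm:changing_scales}). This handles all $\alpha$ at once and needs no moment conditions on $\rho$ and no Taylor expansion of the kernel. It also shows clearly that only unit-scale information about $f$ enters. The price is the weight bookkeeping over the translates $z_k$, which the paper's Schur test handles in one line.

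Two steps need repair.

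\emph{The convolution identity.} The equality $f*(K^{(-)}_t*X^K\tilde\varphi^{(n)})=(X^Kf)*K^{(-)}_t*\tilde\varphi^{(n)}$ is false on a non-commutative $\mbG$. One only has $(X^Kf)*\Psi=f*Y^K\Psi$, and in general $\Psi*X^K\tilde\varphi^{(n)}\neq (X^K\Psi)*\tilde\varphi^{(n)}$.

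Your central claim, bounds on $X^Kg_n$ uniform in $n$, therefore needs that $\Psi_n:=K^{(-)}_t*X^K\tilde\varphi^{(n)}$ has exponentially decaying $C^N$ bounds uniformly in $n$. You do not justify this, but it is true. Write $\Psi_n(x)=X^K_x\int K^{(-)}_t(xy^{-1})\tilde\varphi^{(n)}(y)\,dy$. Each $X_j$ acting on $x\mapsto K^{(-)}_t(xy^{-1})$ produces $\mathrm{Ad}_yX_j=\sum_l c_{jl}(y)X_l$ applied to $K^{(-)}_t$ at $xy^{-1}$. Here the $c_{jl}$ are homogeneous polynomials of degree $\fraks_l-\fraks_j\geq 0$, hence bounded for $|y|\leq 1$.

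Alternatively, apply your estimate only to $X^Lg=f*X^LK^{(-)}_t$, where no $n$ is involved. Then conclude via Theorem~\ref{prop:difference} and Theorem~\ref{th:taylor}, which is effectively what the paper does.

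\emph{The weight bookkeeping.} Iterating \eqref{eq:weight} along unit steps does not give usable growth in general. On non-stratified groups, e.g.\ $\mathbb{R}^2$ with $\fraks=\mathrm{diag}(1,2)$, joining $e$ to $z$ by steps of norm at most $1$ can take of order $|z|^2$ steps. The resulting bound $C^{c|z|^2}$ is not absorbed by $e^{-\kappa'|z|^{m/(m-1)}}$ when $m>2$.

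Use a single local comparison instead. For $u\in B_1(z)$ you have $w(yz)\lesssim w(yu)$. Averaging over $B_1(z)$ and invoking \eqref{assump:weight} gives $\sup_y e^{-\kappa|z|^{m/(m-1)}}w(yz)/w(y)\lesssim_\kappa 1$ for every $\kappa>0$. Your halving $\kappa'\to\kappa'/2$ then makes the sum over $k$ converge.

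Also, the supremum in your last display sits outside the $L^p_x$-norm, so it must be taken over $x$ as well. Alternatively, replace it by a Schur test, which is why \eqref{assump:weight} contains both ratios.
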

\begin{proof}
First consider the case when $f$ is function valued, i.e. $\alpha>0$ and note that by Young's inequality
$$\Big\| \frac{f* X^{I} K^{(-)}_t}{w} \Big\|_{L^p}\lesssim \Big\|\frac{f}{w}\Big\|_{L^p} \ ,$$
if
\begin{equ}
 \sup_{y\in \mathbb{G},t\in (0,T]} \int \frac{|X^{I} K^{(-)}_t(y^{-1}x)| w(y) }{w(x)} dx <\infty,
\qquad 
\sup_{x\in \mathbb{G},t\in (0,T]} \int \frac{|X^{I} K^{(-)}_t(y^{-1}x)| w(y) }{w(x)} dy <\infty \ 
\end{equ}
which in turn is a straightforward consequence of the condition \eqref{assump:weight}.

We turn to the case when $\alpha<0$ we write for $f_n$ as in Theorem~\ref{prop:equiv} $f=f_0+ \sum_{n} f_{n+1}-f_n$.
The argument above provides the desired estimate on $f_0$. 
For the increment
\begin{equ}\label{eq:Young_to_use}
P^{(-)}_t (f_{n+1}-f_{n})= f* (\tilde{\varphi}^{(n+1)}-\tilde{\varphi}^{(n)})* K^{(-)}_t
=f_{n+1}*( K^{(-)}_t-\tilde{\rho}^{(n)}*K^{{(-)}}_t) 
\end{equ}
note that for any $M>0$ 
%
\begin{align*}
K^{(-)}_t(x)-\tilde{\rho}^{(n)}*K^{{(-)}}_t(x)
&= \int \tilde{\rho}^{(n)}(h^{-1})\big( K^{(-)}_t(x)- K^{(-)}_t(h^{-1}x)\big) dh \\
&= \int {\rho}^{(n)}(h)\big( \tilde{K}^{(-)}_t(x^{-1})- \opP^{M}_{x^{-1}}[\tilde{K}^{(-)}_t](h)\big) dh
\\
&=\sum_{{|I|\leq [M]+1, d(I)\geq M}} \int {\rho}^{(n)}(h)\big( X^{I}\tilde{K}^{(-)}_t(x^{-1}z)Q^I(h, dz)\big) dh
\end{align*}
and thus for any $M>0$,  
$$\int\frac{|X^{J}K^{(-)}_t(y^{-1}x)-X^{J}(\tilde{\rho}^{(n)}*K^{{(-)}}_t)(y^{-1}x)| w(y)}{w(x)}dx
\lesssim 
\mfr^{-Mn}
$$
Thus using Young's inequality in \eqref{eq:Young_to_use} it follows that 
$$\Big\|\frac{X^{J} P^{(-)}_t (f_n-f_{n+1})}{w} \Big\|_{L^{p}}\lesssim \mfr^{-nM} \Big\|\frac{f_{n+1}}{w}\Big\|_{L^{p}}
$$
Finally, the very same argument applies to $\partial_t P^{(-)}_t$.
\end{proof}
\begin{lemma}\label{lem:smallSchauder}
In the setting of Proposition~\ref{prop:schauder} it holds that
\begin{equ}\label{eq:small_scaleSchauder}
\|P^{(+)}_t f\|_{B^{\beta, w}_{p,q}}\lesssim t^{-\frac{\beta-\alpha}{m}} \|f\|_{B^{\alpha, w}_{p,q}} \ , 
\qquad \|P_t^{(+)} f-f\|_{B^{\beta, w}_{p,q}}\lesssim t^{\frac{\gamma}{m}} \|f\|_{B^{\beta+\gamma, w}_{p,q}},
\end{equ}
uniformly over $t \in (0,T]$.
\end{lemma}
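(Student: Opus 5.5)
The plan is to work entirely with the wavelet-like characterisation of Theorem~\ref{prop:equiv}, with $f_n=f*\tilde\varphi^{(n)}$. Since $P^{(+)}_t f = f*K^{(+)}_t$ and convolution is associative, $(P^{(+)}_tf)_n = f_n * K^{(+)}_t$. The weight plays only a trivial role here: $K^{(+)}_t$ is supported in $B_1$ and $w$ is a weight, so Young's inequality gives $\|g*k/w\|_{L^p}\lesssim \|g/w\|_{L^p}\|k\|_{L^1}$ for any $k$ supported in $B_1$. Moreover, \eqref{eq:ass_upper} yields $\|X^IK^{(+)}_t\|_{L^1}\lesssim t^{-d(I)/m}$. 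The cutoff $\phi$ perturbs these bounds only by terms supported away from the origin, where $K_t$ is exponentially small and smooth, so all estimates below hold for $K^{(+)}$ exactly as for $K$.

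\textbf{First estimate.} Fix the scale $n_t$ with $\mfr^{-n_t}\sim t^{1/m}$ and treat the ranges $n\le n_t$ and $n> n_t$ separately.
\begin{itemize}
\item For $n> n_t$, write $f_n = f_{n'}*\tilde\rho^{(n'-1,n)}$ with $n'=n_t$. Moving derivatives $X^K$ (with $d(K)$ large, or the $L^p$ norm when $\beta<0$) onto the smooth kernel gives $\|X^K (f_n*K^{(+)}_t)/w\|_{L^p}\lesssim t^{-d(K)/m}\|f_{n_t}/w\|_{L^p}$. This is too crude to sum directly, so instead decompose $f=f_0+\sum_j(f_{j+1}-f_j)$ and use the vanishing moments of $f_{j+1}-f_j$ relative to $\rho^{(j)}$, as in the proof of Lemma~\ref{lem:largeSchauder}. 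The increment $(f_{j+1}-f_j)*X^K K^{(+)}_t = f_{j+1}*(X^KK^{(+)}_t-\tilde\rho^{(j)}*X^KK^{(+)}_t)$ then has norm $\lesssim \|f_{j+1}/w\|_{L^p}\, t^{-d(K)/m}\min\{1,(\mfr^{-j}t^{-1/m})^{R}\}$, obtained via Theorem~\ref{th:taylor} applied to $K_t$, which is smooth at scale $t^{1/m}$.
\item For $n\le n_t$, the kernel is essentially a delta at that scale. One bounds $X^K(f_j*K^{(+)}_t)$ by $\|X^K f_j/w\|$-type quantities, or, for $\alpha<0$, again via the increment decomposition, now gaining $\mfr^{jd(K)}$ instead of $t^{-d(K)/m}$.
\end{itemize}
Combining the two regimes, $\mfr^{n\beta}\|(P^{(+)}_tf)_{n}/w\|$ (resp.\ its $X^K$ analogue) is a discrete convolution of the sequence $\mfr^{j\alpha}\|f_{j+1}/w\|_{L^p}$ against a kernel of the form $t^{-(\beta-\alpha)/m}\,\mfr^{-\epsilon|j-n_t|}\cdots$. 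The $\ell^q$ bound then follows from the discrete Young inequality, equivalently from Lemma~\ref{lem:operatorbound}, with prefactor $t^{-(\beta-\alpha)/m}$. To reach high $\beta$, choose $R$ in Assumption~\ref{ass:pair} larger than $|\alpha|+|\beta|+m$.

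\textbf{Second estimate.} Write $P^{(+)}_tf-f=f*(K^{(+)}_t-\delta)$ and apply it blockwise to $f_n$. The key input is that $K^{(+)}_t-\delta$ annihilates polynomials up to order $\gamma<m$, up to an error $O(t)$. More precisely, by \eqref{eq:ass_poly} and $\int K_t=1$, $\int P K_t = P(e)+O(t)$ for $P\in\mathcal P_m$, and the cutoff changes this only by an exponentially small amount. Taylor expanding $f_n$ via Theorem~\ref{th:taylor} to order $\gamma$ and integrating against $K_t$, whose moments of order $d(I)$ are $\lesssim t^{d(I)/m}$, gives $\|(f_n*K^{(+)}_t-f_n)/w\|\lesssim t^{\gamma/m}\sum_{d(I)>\gamma}\mfr^{n(d(I)-\gamma)}t^{(d(I)-\gamma)/m}\|\cdot\|$ for $n\le n_t$. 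For $n>n_t$ one bounds the two terms separately: $\|f_n/w\|\lesssim \mfr^{-n(\beta+\gamma)}\cdot$norm, which is at most $t^{\gamma/m}\mfr^{-n\beta}$ there, while the $P_t^{(+)}f_n$ piece is handled as in the first estimate. The polynomial defect $O(t)$ is harmless because $t\le t^{\gamma/m}$ for $t\le T$, up to constants.

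The main obstacle is the bookkeeping in the first estimate when $\alpha<0<\beta$ or when $\beta$ crosses several values in $\triangle$. There one must check that the Taylor remainder gain $(\mfr^{-j}t^{-1/m})^{R}$ at fine scales and the derivative cost at coarse scales combine into an exponent summable on both sides of $n_t$. I would first verify the case $\alpha<0$, $\beta<0$ via \eqref{equiv}, then reduce $\beta>0$ to it by applying $X^K$ and using $X^K(f*K)=f*X^KK$.
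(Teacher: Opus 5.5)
Your route differs from the paper's. The paper decomposes the \emph{kernel} $K^{(+)}_t=\sum_{n\le N_t}K^{(n)}_t$ over parabolic annuli and bounds the composite kernels $\Psi^{k,n}_t=K^{(n)}_t*\tilde\varphi^{(k)}$ and $\Xi^{(k)}_t$ as rescaled test functions. It gets the second bound from $P^{(+)}_tf-f=\int_0^t\partial_sP^{(+)}_sf\,ds$, with \eqref{eq:ass_poly} controlling the low moments of $\partial_sK_s$, so that $\int_0^t\mfr^{N_s(m-\gamma)}\,ds\lesssim t^{\gamma/m}$. You decompose $f$ instead and play $t^{1/m}$ against $\mfr^{-j}$ through the vanishing moments of $\rho$, much as in the proof of Lemma~\ref{lem:largeSchauder}. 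For the second bound you use $\int PK_t=P(e)+O(t)$ on $\mathcal P_m$, which does follow from \eqref{eq:ass_poly}, $\int K_t=1$ and \eqref{eq:ass_upper}. This avoids the space--time partition of unity and absorbs the tails of $K_t$ into $L^1$ bounds, and it can be made to work. As written, however, it rests on a false identity.

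The claim $(P^{(+)}_tf)_n=f_n*K^{(+)}_t$ uses commutativity of convolution. Associativity only gives $(P^{(+)}_tf)_n=f*K^{(+)}_t*\tilde\varphi^{(n)}$, and on a non-abelian $\mathbb{G}$ the functions $K^{(+)}_t$ and $\tilde\varphi^{(n)}$ do not commute under convolution. Indeed $g_n:=f_n*K^{(+)}_t$ does not even satisfy $g_n=g_{n+1}*\tilde\rho^{(n)}$, so Theorem~\ref{prop:equiv} tells you nothing about it. The same slip occurs in the second estimate: there $(P^{(+)}_tf-f)_n=f*(K^{(+)}_t-\delta)*\tilde\varphi^{(n)}$, not $f_n*K^{(+)}_t-f_n$.

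This is not cosmetic, because it decides where derivatives land. First, $X^L(P^{(+)}_tf)_n=f*K^{(+)}_t*X^L\tilde\varphi^{(n)}$ puts $X^L$ on $\tilde\varphi^{(n)}$, at cost $\mfr^{nd(L)}$, which is too large for $n>n_t$. Second, the increment operator $\delta-\tilde\rho^{(j)}$ acts from the left, so it is controlled by right-invariant derivatives $Y^J(K^{(+)}_t*\psi)=(Y^JK^{(+)}_t)*\psi$. These fall on $K_t$ at cost $t^{-d(J)/m}$, which is too large for $n\le n_t$.

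To repair this, transfer derivatives across the convolution using $(X^I\psi)*\phi=\psi*Y^I\phi$ together with \eqref{eq:re_expand_basis}. The homogeneous polynomial coefficients this produces have degree equal to the number of excess derivatives moved onto the coarser factor. They are evaluated on the finer factor (the support of $\tilde\varphi^{(n)}$, resp.\ the exponential profile of $K_t$ at scale $t^{1/m}$). The net cost is therefore a nonnegative power of the ratio of the finer to the coarser scale, and your block bounds survive.

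In the second estimate, Taylor expand $\tilde\varphi^{(n)}$ under left translation by $y\in\operatorname{supp}K^{(+)}_t$, not $f_n$. This yields a combination $\sum_{d(J)<\gamma'}c_J(t)(X^Jf)_n$ with $|c_J(t)|\lesssim t$, plus a remainder. The terms $(X^Jf)_n$ are controlled by the boundedness of $X^J:B^{\beta+\gamma,w}_{p,q}\to B^{\beta+\gamma-d(J),w}_{p,q}$. Take the expansion order $\gamma'\in(\gamma,m)$ rather than $\gamma$. Proposition~\ref{prop:schauder} allows $\gamma\in\triangle$, and then remainder terms with $d(I)=\gamma$ give no decay over $n\le n_t$, hence a logarithmic loss in $\ell^q$.
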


\begin{proof}
Fix a smooth dyadic partition of unity $\mathbf{1}_n$
adapted to the annuli
$$
  A_n:= \Big\{     (t,x)\in \mathbb{R}\times \mathbb{G}\ :\   |t|^{1/m}+|x|\in (\mfr^{-n-1}, \mfr^{-n} ) \Big\}
$$
and set $K^{(n)}_t(x):= \mathbf{1}_n(t,x) \cdot K^{(+)}_t(x)$. 
Set $N_t\in \mathbb{N}$ for $t\in (0,1]$ to be such that $t^{1/m}\in (\mfr^{-N_t-1},\mfr^{-N_t}]$.
Then, by \eqref{eq:ass_upper}
 for every $I,k$ and $A\geq 0$,
\begin{equ}\label{eq:component_upper_bound}
 |\partial^k_t X^{I} K^{(n)}_t|\lesssim_{A,I,k} \mfr^{-A(N_t-n)} \mfr^{n(|\fraks|+d(I)+km)}
\end{equ} 
  uniformly in $n$ and $t$ 
 and  $K^{(+)}_t(x)=\sum_{n=0}^{N_t} K^{(n)} (t,x)\ .$ 
Set $\Psi_t^{k,n}:=K^{(n)}_t\ast \tilde{\varphi}^{(k)}$
and $\Xi_t^{(k)}= \sum_{n=k+1}^{N_t} \Psi_t^{k,n}$.
  Note that 
   $$(f*K^{(+)}_t)_k =f*K^{(+)}_t\ast \tilde{\varphi}^{(k)}= \sum_{n=0}^{N_t\wedge k} f*\Psi_t^{k,n} +  f*\Xi^{(k)}_t \ .
   $$ 
 Note that $\Psi_t^{k,n}$ is supported at scales of order $\mfr^{-(k\wedge n)}$. The function $\Xi_t^{(k)}$ is supported at scales of order $\mfr^{-k}$ and vanishes whenever 
 $k\geq N_t$. It follows from \eqref{eq:component_upper_bound} that for any $A\geq 0$
\begin{equ}\label{eq:rescaled_function_bound}
  | X^{I}\Psi_t^{k,n}|\lesssim_{I,A } \mfr^{-A(N_t-n)}  \mfr^{(k\wedge n)(|\fraks| +d(I)) } \ , \qquad |X^{I}\Xi_t^{(k)}|\lesssim   \mfr^{k(|\fraks| +d(I)) } \ .
\end{equ}
uniformly in $n,k,I$ and $t$. 

 We shall  
 estimate Besov norms using Theorem~\ref{prop:equiv}.
Note that whenever either $\alpha\leq 0$ or $0<\alpha<d(I)$
\begin{equs}   
   \Big\| \frac{X^{I}(f*K^{(+)}_t)_k}{w}\Big\|_{L^p}&\leq \sum_{n=0}^{N_t\wedge k}\Big\| \frac{f*X^{I}\Psi_t^{k,n}}{w}\Big\|_{L^p} + \Big\| \frac{f*X^{I}\Xi^{(k)}_t}{w}\Big\|_{L^p} \\
   &
   \lesssim 
\|f\|_{B^{\alpha,w}_{p,q}}
\Big(\sum_{n=0}^{N_t\wedge k} \mfr^{-(\alpha-d(I))(k\wedge n)}  + \mathbf{1}_{k<N_t} \mfr^{-(\alpha-d(I)) k}\Big)\\
&\lesssim \|f\|_{B^{\alpha,w}_{p,q}} (\mfr^{-(\alpha-d(I)) (k\wedge N_t)} + \mathbf{1}_{k<N_t} \mfr^{-(\alpha-d(I)) k} ), \label{locbound}
\end{equs}   
where in the case $\alpha>0$ we use that for every $P\in \mathcal{P}_\alpha$
$$
P*X^{I}\Psi_t^{k,n}=P*X^{I}\Xi^{(k)}_t=0 \ .
$$

We first prove the first inequality of \eqref{eq:small_scaleSchauder} in the case $\alpha<\beta<0$,
\begin{align*}
\Bigg(
\sum_{k\geq0}
\Big[
\mfr^{k\beta}  \Big\| \frac{(f*K^{(+)}_t)_k}{w}\Big\|_{L^p}\Big]^q
\Bigg)^{1/q}
\lesssim
&\Bigg(
\sum_{k\geq0}
\Big[
\mfr^{k\beta}
(\mfr^{-\alpha (k\wedge N_t)} + \mathbf{1}_{k<N_t} \mfr^{-\alpha k} )
\Big]^q
\Bigg)^{1/q}\\
&\leq 
\Bigg(
\sum_{k\geq0}
\Big[
\mfr^{k\beta}
 \mfr^{-\alpha (k\wedge N_t)} 
\Big]^q
\Bigg)^{1/q}
+
\Bigg(
\sum_{k\geq0}
\Big[
\mfr^{k\beta}
\mathbf{1}_{k<N_t} \mfr^{-\alpha k} 
\Big]^q
\Bigg)^{1/q}\\
&\lesssim
\mfr^{N_t(\beta-\alpha)} \lesssim t^{-\frac{\beta-\alpha}{m}}
\end{align*}
In the case $\beta>0$, again by Theorem~\ref{prop:equiv},
\begin{equs}
\|P^{(+)}_t f\|_{B^{\beta,w}_{p,q}}
&\lesssim
\Big\|\frac{(P^{(+)}_t f)\ast \tilde\varphi}{w}\Big\|_{L^p}
+
\max_{\substack{|L|\leq[\beta]+1\\ d(L)>\beta}}
\Bigg(
\sum_{k\geq0}
\mfr^{kq(\beta-d(L))}
\Big\|
        \frac{X^L\big((P^{(+)}_t f)\ast \tilde\varphi^{(k)}\big)}{w}
\Big\|_{L^p}^q
\Bigg)^{1/q}\\
&\lesssim
 \Big\|\frac{ f\ast\Psi_t^{0,0}}{w}\Big\|_{L^p} + 
 \Big\| \frac{f*\Xi^{(0)}_t}{w}\Big\|_{L^p} \\
&\qquad +
\max_{\substack{|L|\leq[\beta]+1\\ d(L)>\beta}}
\Bigg(
\sum_{k\geq0}
\mfr^{kq(\beta-d(L))}
\Big[\sum_{n=0}^{N_t} \Big\|
        \frac{X^{L} (f \ast K^{(+)})_k}{w}\Big\|_{L^p}\Big]
^q
\Bigg)^{1/q}\ .
\end{equs}
The terms on the first line are easily bounded, while using \eqref{locbound} for each $L$
 reduces to the estimate
$$
\Bigg(
\sum_{k\geq0}
\Big[
\mfr^{k(\beta-d(L))}
(\mfr^{-(\alpha-d(L)) (k\wedge N_t)} + \mathbf{1}_{k<N_t} \mfr^{-(\alpha-d(L)) k} )
\Big]^q
\Bigg)^{1/q} \lesssim \mfr^{N_t(\beta-\alpha)} \ .
$$


We turn to the second inequality of \eqref{eq:small_scaleSchauder}.
Set $\Gamma_{s}^{k,n}:=\partial_s  K^{(n)}_s *\tilde{\varphi}^{(k)}$ and 
$\Theta_s^{(k)}= \sum_{n\geq k+1}^{N_s} \Gamma_{s}^{k,n}$. 
We claim that these functions satisfy
\begin{equ}\label{bounds}
| X^{K}\Gamma_s^{k,n}|\lesssim_{K,A } \mfr^{-A(N_s-n)}  \mfr^{(k\wedge n)(|\fraks| +d(K)) } \mfr^{mn} \ , 
\qquad 
| X^{K}\Theta_s^{(k)}|\lesssim_{K,A }  \mfr^{k (|\fraks| + d(K) +m)}
\end{equ}
Indeed, the first bound follows straightforwardly from \eqref{eq:component_upper_bound}. For the second bound we write 
$
\Theta_s^{(k)}= \partial_s K^{(+)}_s * \phi^{(k)}- \sum_{n\leq k} \Gamma_s^{k,n}\ .
$
The estimate on the sum follows directly from the first bound of \eqref{bounds} and we turn to the former term for which we note
\begin{equs}\label{toboundssss}
X^{K}(\partial_s K^{(+)} * \tilde{\varphi}^{(k)})(x) &= \int_{\mathbb{G}} \partial_s K^{(+)}(y) \big(Y^{K}{\varphi}^{(k)}\big)(x^{-1}y) dy\\
&=\int_{|x^{-1}y|\leq \mfr^{-k}} \partial_s K^{(+)}(y) \big[\big(Y^{K}{\varphi}^{(k)}\big)(x^{-1}y)-\opP^{m}_{x^{-1}}[Y^{K}{\varphi}^{(k)}](y)
\big] dy \\
&\qquad +
\int_{|x^{-1}y|\leq \mfr^{-k}}  \partial_s K^{(+)}(y) \opP^{m}_{x^{-1}}[Y^{K}{\varphi}^{(k)}](y) dy.
\end{equs}
The second summand of \eqref{toboundssss} is clearly bounded by a multiple of $\mfr^{k(m+|\fraks|+d(K))} $
since $$
\Big| \int \partial_s K^{(+)}(y)\eta^{I}(y)dy\Big|\leq \Big| \int \partial_s K(y)\eta^{I}(y)dy\Big| + \Big| \int \partial_s K^{(-)}(y)\eta^{I}(y)dy\Big|<\infty\ 
$$ whenever $d(I)<m$.
For the first summand of \eqref{toboundssss} note that
\begin{align*}
&\sum_{|I|\leq [m]+1, d(I)> m}  \mfr^{k^{(|\fraks|+ d(I) +d(K))}} \int_{|x^{-1}y|\leq \mfr^{-k}} \big| \partial_s K^{(+)}(y)\big|\,  |y|^{d(I)}  dy
\lesssim 
\sum_{|I|\leq [m]+1, d(I)> m} \mfr^{k(m+|\fraks|+d(K))} \mfr^{{k}(m-d(I))} \ .
\end{align*}

We can write similarly to above
$$ (P^{(+)}_t f -P^{(+)}_u f) *\tilde{\varphi}^{(k)}=  \sum_{n} \int_{u}^t f* \partial_s  K^{(n)}_s *\tilde{\varphi}^{(k)} ds =  \int_{u}^t  \sum_{n=0}^{N_s\wedge k} f* \Gamma_{s}^{k,n} +  f*\Theta_s^{(k)} ds \ $$
and thus uniformly in $s\in (0,t)$
$$ \big|X^{I}\big(P^{(+)}_t f*\tilde{\varphi}^{(k)}-  P^{(+)}_sf*\tilde{\varphi}^{(k)}\big)\big|\leq    \int_{0}^t \Big| \sum_{n=0}^{N_s\wedge k} X^{I}f*  \Gamma_{s}^{k,n}\Big| ds  +
 \int_{0}^t \Big| X^{I} f*\Theta_s^{(k)}\Big| ds
 \ $$
 and therefore by dominated convergence
\begin{equs}
  & \Big\| \frac{|X^{I}P^{(+)}_t f*\tilde{\varphi}^{(k)}-  X^{I}f*\tilde{\varphi}^{(k)}|}{w}\Big\|_{L^p}
  \leq \int_{0}^t  \Big[\sum_{n=0}^{N_s\wedge k}\Big\| \frac{X^{I} f*\Gamma_{s}^{k,n}}{w}\Big\|_{L^p}
+ \Big\| \frac{X^{I}f*\Theta_{s}^{(k)}}{w}\Big\|_{L^p}\Big] ds  .
\end{equs}   
From here on we can argue as for the first inequality of \eqref{eq:small_scaleSchauder} but using \eqref{bounds} instead of \eqref{eq:rescaled_function_bound}
to conclude that for $\beta<0$
$$
\Bigg(
\sum_{k\geq0}
\Big[
\mfr^{k\beta}  \Big\| \frac{(P^{(+)}_t f-  f}{w}\Big\|_{L^p}\Big]^q
\Bigg)^{1/q}
\lesssim \int_0^t\mfr^{N_s(m-\gamma)}ds \|f\|_{B^{\beta+\gamma,w}_{p,q}} \lesssim
 t^{\gamma/m} \|f\|_{B^{\beta+\gamma,w}_{p,q}}\ .
$$
The argument for $\beta>0$ is analogous.

\end{proof}

\subsection{A Kolmogorov Criterion}
The Kolmogorov criterion is an important probabilistic result for determining whether a random field has a modification in a specified function or distribution space. The next lemma provides such a criterion adapted to the intrinsic Besov spaces considered in this article. With the SPDE applications of the next section in mind, we
formulate it at the level of an integrated process $X_t$ rather than the space-time noise $\partial_t X= \xi$, as this is more convenient when using mild-sewing arguments.
\begin{lemma}\label{lem:kolmogorov}
Assume that for $C>0, r\in [1,\infty), \alpha\leq 0$, $H\in (0,1]$ a random field\footnote{
We in particular assume we are given a probability space $\Omega$, such that $(\omega, t,\phi)\mapsto X_t({\phi})$ is jointly measurable.}
 $X_t({\phi})=X^{\omega}_t({\phi})$ indexed by $t\in \mathbb{R}$, $\phi\in L^2(\mathbb{G})$ is (space) left-translation invariant in law and satisfies for $\varphi^{(n)}$ as in Assumption~\ref{ass:pair}
$$ \E\big[\big|X_t(\varphi^{(n)})\big|^r\big]\leq C \mfr^{-n\alpha r} \ , \qquad \E\big[\big|X_t(\varphi^{(n)})-X_s(\varphi^{(n)})\big|^r\big]\leq C|t-s|^{Hr} \mfr^{-n\alpha r}$$
uniformly over $s,t\in [0,T]$.
Then, if $1/r<H$, there exists a modification for which the map $t\mapsto X_t\in \mathcal{D}'$ is continuous and satisfies for every $\bar{\alpha}<\alpha-|\fraks|/r$,
and $\bar{H}<H-1/r$,
 every $p\in [1,r]$ and $q\in [1,\infty]$, and weight $w$ satisfying $w(x)^{-1}\in {L}^p(\mathbb{G})$
 $$ 
\sup_{t\in [0,T]}\|X_{t}\|_{B^{\bar{\alpha},w}_{pq}}+ \sup_{\substack{t,s\in [0,T]\\t\neq s}}\frac{ \|X_t-X_s\|_{B^{\bar{\alpha},w}_{pq}}}{|t-s|^{\bar{H}}} \lesssim_\omega \|w^{-1}\|_{L^p}, \qquad a.s.
$$ 
\end{lemma}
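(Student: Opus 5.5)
The plan is to use the wavelet-like characterisation of Theorem~\ref{prop:equiv} for $\alpha<0$ (we may assume $\bar\alpha<0$ and $\bar\alpha\notin\triangle$ by first lowering $\bar\alpha$ slightly). This reduces the Besov norm of $X_t$, and of $X_t-X_s$, to the sequence of functions
\[
X_{t,n}(x):=X_t\big((\varphi^{(n)})_x\big),
\]
which satisfy $X_{t,n}=X_{t,n+1}*\tilde\rho^{(n)}$ by Assumption~\ref{ass:pair}, by linearity and by measurability in $(\omega,x)$. Item~\ref{item2} of Theorem~\ref{prop:equiv} then produces the distribution $X_t$ itself, which gives the modification: we define it as the limit of $X_{t,n}$ wherever the norm \eqref{equiv} is finite. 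The central estimate is a moment bound on
\[
\Big(\sum_{n\geq 0}\mfr^{n\bar\alpha q}\Big\|\frac{X_{t,n+1}-X_{s,n+1}}{w}\Big\|^q_{L^p}\Big)^{1/q}
\]
that is Hölder in $|t-s|$, followed by a continuity argument in time.

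\textbf{Step 1: fixed-time moment bound.} For $p\le r$, Minkowski's integral inequality (or Jensen, since $p\le r$) together with translation invariance in law give
\[
\E\Big\|\frac{X_{t,n}-X_{s,n}}{w}\Big\|_{L^p}^r \le \Big(\int \frac{\E\big[|X_t((\varphi^{(n)})_x)-X_s((\varphi^{(n)})_x)|^r\big]^{p/r}}{w(x)^p}\,dx\Big)^{r/p}\le C|t-s|^{Hr}\mfr^{-n\alpha r}\|w^{-1}\|^r_{L^p}.
\]
The fixed-time bound for $X_{t,n}$ alone follows in the same way. Using $\ell^q\supset\ell^r$-type bounds, or more crudely bounding the $\ell^q$ sum by a weighted $\ell^1$ sum with an extra factor $\mfr^{-n\epsilon}$, we obtain
\[
\E\|X_t-X_s\|^r_{B^{\alpha-\epsilon,w}_{pq}}\lesssim |t-s|^{Hr}\|w^{-1}\|_{L^p}^r .
\]

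\textbf{Step 2: Kolmogorov in time.} Apply the classical Kolmogorov continuity theorem to the Banach-space-valued process $t\mapsto X_t\in B^{\alpha-\epsilon,w}_{pq}$. A dyadic chaining on $[0,T]$ gives a continuous modification with $\bar H$-Hölder norm for every $\bar H<H-1/r$, which uses $1/r<H$. Continuity into $\mathcal D'$ follows because the Besov space embeds into $\mathcal D'$. The bound $\lesssim_\omega\|w^{-1}\|_{L^p}$ comes from linearity in the normalisation: the random constant is the Hölder norm of the process divided by $\|w^{-1}\|_{L^p}$, which has finite $r$-th moment.

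\textbf{Expected obstacle.} The main subtlety is the loss of $|\fraks|/r$ in $\bar\alpha$ stated in the lemma, which my argument seems not to need when $p\le r$ and $w^{-1}\in L^p$. I would still allow for it, because turning the countable family of pairings $X_t(\varphi^{(n)}_x)$ into a genuine random distribution requires a measurable version in $x$. The natural route is to run Kolmogorov jointly in $(t,x)$ at each scale $n$, over a grid of spacing $\mfr^{-n}$, using that $x\mapsto\varphi^{(n)}_x$ is Lipschitz with constant $\mfr^{n}$ in $L^2$. This pointwise control costs the factor $\mfr^{n|\fraks|/r}$ and explains $\bar\alpha<\alpha-|\fraks|/r$. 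The remaining step is to verify that the consistency $X_{t,n}=X_{t,n+1}*\tilde\rho^{(n)}$ holds almost surely for the modified family, by continuity and countability.
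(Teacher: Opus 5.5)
Your proposal follows essentially the same route as the paper: per-scale moment bounds for $X_t(\varphi^{(n)}_x)$ in weighted $L^p$ via Minkowski's inequality (using $p\le r$) and stationarity, assembly into a Besov norm through Theorem~\ref{prop:equiv}, and Banach-space-valued Kolmogorov in time. Your observation that no $|\fraks|/r$ loss is needed for $p\le r$ is correct, and the paper's proof does not use that loss either; the joint $(t,x)$ chaining you propose to explain it is unnecessary, because joint measurability in $(\omega,t,\phi)$ is assumed and $x\mapsto\varphi^{(n)}_x$ is continuous into $L^2$.
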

\begin{proof}
For $n\geq 0$ set
$
X_t^{(n)}(x):=(X_t,\varphi^{(n)}_x) \ .
$
By spatial stationarity,
\[
 \E |X_t^{(n)}(x)|^r \lesssim \mfr^{-n\alpha r},
\qquad
 \E |X_t^{(n)}(x)-X_s^{(n)}(x)|^r \lesssim |t-s|^{Hr}\mfr^{-n\alpha r},
\]
uniformly in $x\in\mathbb G$. 
Since $r/p\ge 1$,
\begin{equs}
\E \big[\|X_t^{(n)}\|_{L^p_w}^r\Big] &=
\E \Big(\int_{\mathbb G} |X_t^{(n)}(x)|^p w(x)^{-p}\,dx\Big)^{r/p}
\leq
\Big(\int_{\mathbb G} (\E |X_t^{(n)}(x)|^r)^{p/r} w(x)^{-p}\,dx\Big)^{r/p}.
\\
&\leq C \mfr^{-n\alpha r}\|w^{-1}\|_{L^p}^r.
\end{equs}
and similarly 
$
\E \|X_t^{(n)}-X_s^{(n)}\|_{L^p_w}^r
\leq C
|t-s|^{Hr}\mfr^{-n\alpha r}\|w^{-1}\|_{L^p}^r.
$

Thus it follows by Theorem~\ref{prop:equiv} that almost surely 
$X_t= \lim X^{(n)}_t$ exists and that
for every $\bar\alpha<\alpha-|\mathfrak s|/r$,
\[
\E \|X_t\|_{B^{\bar\alpha,w}_{p,q}}^r \lesssim C \|w^{-1}\|_{L^p}^r,
\qquad
\E \|X_t-X_s\|_{B^{\bar\alpha,w}_{p,q}}^r
\leq C |t-s|^{Hr}\|w^{-1}\|_{L^p}^r.
\]
The proof is complete by applying standard Banach space valued Kolmogorov.
\end{proof}

\begin{remark}
Important examples include fractional in time, white in space noises $W^{H}$ for $H\in (0,1)$. This is characterised as the centered Gaussian field with covariance 
$$
\E\big[W^{H}_t({\phi})W^{H}_s({\psi})\big] =\frac{|t|^{2H}+|s|^{2H}- |t-s|^{2H}}{2} \langle \phi, \psi\rangle_{L^{2}(\mathbb{G})}   \ .
$$
Fractional in space variants include noises of the form 
$K*W^{H}_t$ obtained as convolution in space with a singular kernel $K$, see \cite[Sec.~6]{MS25} as well as 
 $(\mathcal{R} +c)^{\theta} W^{H}_t$ for $\theta\in \mathbb{R}$, $c> 0$ or $\mathcal{R}^{\theta} W^{H}_t$ for $\theta>-\frac{|\fraks|}{2m}$.
These are easily checked to satisfy the assumptions of Lemma~\ref{lem:kolmogorov} for any $r\geq 2$ and $\alpha<(-m\theta- |\fraks|/2)\wedge 0$.

The last example includes the noises considered in \cite{Tindel_Big, Tindel_ito} in the special case when $\mathbb{G}$ is the Heisenberg group and $\mathcal{R}$ the sub-Laplacian.
\end{remark}

\section{Applications to Parabolic Anderson Models}\label{sec_applications to SPDE}
In this section, we illustrate the above considerations on function spaces by establishing the well-posedness of SPDEs 
of the form \eqref{eq:illustrate}
on $\mathbb{R}\times \mathbb{G}$. 
Standard power counting from Euclidean space naturally adapts to this setting:
Assuming that the operator $\mathcal{R}$ has degree $m$, it is natural to interpret the direct product $\mathbb{R}\times \mathbb{G}$ as a homogeneous Lie group  with the dilation $\bar{\mathfrak{s}}$, such that $\bar{\mathfrak{s}} \partial_t= m \partial_t$ and $\bar{\mathfrak{s}}X_i= {\mathfrak{s}}X_i$.
Then a necessary condition for this equation to belong to the Young regime, i.e.\ all products on the right-hand side can be made sense of by Young multiplication, is given
for $\alpha^{I}$ the space-time regularity of $\xi^{(I)}$ by
\begin{equ}\label{eq:young_regime}
\min_{I\in \mathcal{K}} \alpha^I + \min_{K\in \mathcal{K}} (\alpha^{K}-d(K)) +m >0 \ ,
\end{equ}
for $\mathcal{K}:=\big\{K \ : \ d(K)<m, c_K\neq 0\big\}$.
%

%
%

%

\subsection{A Sewing Lemma}

We present a mild sewing lemma in the spirit of \cite{Gub_tindel_sewing}.
Technical novelties, besides the formulation being closer to \cite{friz_hairer_20_introduction}, are that we work with norms $|\cdot |_{\alpha,t}$ indexed by $\alpha\in \mathbb{R}$ and $t\in [0,T]$ (where the index $t$ will allow to work with time-dependent weights), and that we shall formulate a variant which allows norms to blow up close to $t=0$ (to allow for rougher initial conditions when applied to SPDEs).

We fix Banach spaces $(V_{\alpha,t},|\cdot |_{\alpha,t})$ such that $V_{\beta,s}\subset V_{\alpha,t}$ whenever $s\leq t, \beta\geq \alpha$ with embedding norm at most $1$. We shall assume we are given a semigroup $S_t$ acting on these spaces, satisfying 
\begin{equ}\label{eq:semi-group_bounds}
|S_t u|_{\alpha+\beta, T} \lesssim t^{-\beta} |u|_{\alpha, T}, \qquad |S_t u- u|_{\alpha-\gamma, T} \lesssim t^{\gamma} |u|_{\alpha, T} \ ,
\end{equ}
uniformly in $\gamma\in [0,1]$ and $u\in V_{\alpha,T}$, as well as uniformly over $t,T$, $\beta\geq 0 , \alpha$ in compact sets.

We shall work with increments $\Xi_{t,s}\in V_{\alpha, t}$ for $t\geq s>0$, and write 
$$(\hat{\delta} \Xi)_{t,u,s} 
= \Xi_{t,s} -\Xi_{t,u}- S_{t-u}\Xi_{u,s}  \ .
$$

\begin{definition}
For $\sigma,\mu>0$, a finite set $J$,  $\eta=(\eta_1, (\eta_{2,j})_{j\in J})$ and $\rho=(\rho_j)_{j\in J}$ where $\eta_1, \eta_{2,j}, \rho_j \geq 0$ and $\alpha\in \mathbb{R}$, 
we define $C^{\sigma,\mu,\eta}_{2,\alpha,\rho}((0,T],V)$ 
to be the space of increments
 $\{\Xi_{t,s}\}_{0<s<t\leq T}$  satisfying 
$$\Xi_{t,s}=S_{t-s} \tilde{\Xi}_{t,s}, \qquad  (\hat{\delta} \Xi)_{t,u,s} = \sum_{i} S_{t-u} \tilde{\Xi}^i_{t,u,s}$$
for some $\tilde{\Xi}_{t,s}\in V_{\alpha,t}$ and $\tilde{\Xi}^i_{t,u,s}\in V_{\alpha-\rho_i,t}$ which satisfy 
 
$$
\llbracket  \tilde{\Xi}\rrbracket_{\sigma;\eta_1;T}:= \sup_{\substack{0<s<t\leq T}} \frac{|\tilde{\Xi}_{t,s}|_{\alpha,t}}{s^{-\eta_1}|s-t|^{\sigma}}<\infty\;, \qquad 
\llbracket  \tilde{\Xi} \rrbracket_{\mu;\eta_2;T}:=
\sum_{i\in J}
\sup_{\substack{0<s<u<t \leq T}} 
 \frac{
|\tilde{\Xi}^i_{tus}|_{\alpha-\rho_i,t}}{ |t-s|^{\mu} s^{-\eta_{2,i}}}<\infty \ .
$$
\end{definition}

We say that $\mathcal{P}$ is a partition of $[s,t]\subset \mathbb{R}$, if it is a finite collection of closed intervals such that the intersection of any two contains at most a point
and the union of all intervals in $\mathcal{P}$ equals $[s,t]$. We set $|\mathcal{P}|$ to be the length of the largest interval.
\begin{lemma}\label{lem:sewing}
For $0<\sigma\leq 1<\mu$, $\eta_1, \eta_{2,i}\geq 0$  and $\rho_i \in [0,1)$
let $\Xi\in C^{\sigma,\mu,\eta}_{2,\alpha,\rho}((0,T],V)$. Let $\beta\geq 0$ be such that $\beta+\rho_i<1$ for every $i\in J$.
\begin{enumerate}
\item\label{sewing_it1} Then,
 for every
$0<s<t\leq T$, the limit over partitions $\mathcal{P}$ of $[s,t]$
\[
(\mathcal I\Xi)_{t,s}
 := \lim_{|\mathcal P|\to 0}
      \sum_{[u,v]\in\mathcal P} S_{t-v}\Xi_{v,u}
      \in V_{\alpha,t}
\]
exists and satisfies $
(\mathcal I\Xi)_{t,s}
=
(\mathcal I\Xi)_{t,u}
+
S_{t-u}(\mathcal I\Xi)_{u,s}
$
for  $s<u<t$ and
\begin{equ}\label{sewing_error}
\bigl|(\mathcal I\Xi)_{t,s}-\Xi_{t,s}\bigr|_{\alpha+\beta,t}
 \lesssim
 \llbracket\tilde\Xi \rrbracket_{\mu;\eta_2;T}
 \sum_i s^{-\eta_{2,i}} |t-s|^{\mu-\rho_i-\beta}
\end{equ}
\item\label{sewing_it2} Assume in addition that $\sigma>\eta_1$ and $\mu>\eta_{2,i}+\rho_{i}$ for each $i$.
Then, for every $t\in(0,T]$, the limit
$
(\mathcal I\Xi)_{t,0}
:=
\lim_{s\downarrow0}(\mathcal I\Xi)_{t,s} \in V_{\alpha+\beta,t}
$
exists and
\begin{equ}\label{sewing_error2}
\bigl|(\mathcal I\Xi)_{t,0}\bigr|_{\alpha+\beta,t}
\lesssim
\llbracket  \tilde{\Xi}\rrbracket_{\sigma;\eta_1;T}
t^{\sigma-\beta-\eta_1}
+\sum_i
\llbracket  \tilde{\Xi} \rrbracket_{\mu;\eta_2;T}
t^{\mu-\rho_i-\beta-\eta_{2,i}}.
\end{equ}
\end{enumerate}
The implicit constant in both \eqref{sewing_error} and \eqref{sewing_error2} is independent of $t,s\in (0,T]$ and $\Xi\in C^{\sigma,\mu,\eta}_{2,\alpha,\rho}((0,T],V)$.
\end{lemma}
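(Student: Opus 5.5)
The plan is to follow the standard dyadic-partition proof of the sewing lemma, as in \cite{friz_hairer_20_introduction} and \cite{Gub_tindel_sewing}, adapting it to the mild increment $\hat\delta$ and the time-indexed norms. For $0<s<t\leq T$ let $\mathcal P_k$ be the dyadic partition of $[s,t]$ into $2^k$ intervals, and set
$$\mathcal I_k:=\sum_{[u,v]\in\mathcal P_k}S_{t-v}\Xi_{v,u}.$$
Refining $[u,v]$ at its midpoint $r$ gives
$$\mathcal I_{k+1}-\mathcal I_k=-\sum_{[u,v]\in\mathcal P_k}S_{t-v}(\hat\delta\Xi)_{v,r,u}=-\sum_{[u,v]}\sum_i S_{t-r}\tilde\Xi^i_{v,r,u}.$$
Each summand is estimated in $|\cdot|_{\alpha+\beta,t}$ using \eqref{eq:semi-group_bounds} and the embeddings $V_{\cdot,v}\subset V_{\cdot,t}$. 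This gives
$$|S_{t-r}\tilde\Xi^i_{v,r,u}|_{\alpha+\beta,t}\lesssim (t-r)^{-(\rho_i+\beta)}\,|v-u|^{\mu}u^{-\eta_{2,i}}\llbracket\tilde\Xi\rrbracket_{\mu;\eta_2;T}.$$
Since $u\geq s$, we may replace $u^{-\eta_{2,i}}$ by $s^{-\eta_{2,i}}$. The singular factor $(t-r)^{-(\rho_i+\beta)}$ is handled by comparing the sum with a Riemann sum: with $h=2^{-k}(t-s)$ we have $\sum_{[u,v]}(t-r)^{-(\rho_i+\beta)}h\lesssim (t-s)^{1-\rho_i-\beta}$, because $\rho_i+\beta<1$ and $t-r\geq h/2$. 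Hence
$$|\mathcal I_{k+1}-\mathcal I_k|_{\alpha+\beta,t}\lesssim \sum_i s^{-\eta_{2,i}}2^{-k(\mu-1)}|t-s|^{\mu-\rho_i-\beta}.$$
This is summable because $\mu>1$, and it gives \eqref{sewing_error} for the dyadic limit. With $\beta=0$, using $|\cdot|_{\alpha,t}$, we also get convergence in $V_{\alpha,t}$.

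Next I would show that the limit is independent of the partition, following the classical argument. Given an arbitrary partition $\mathcal P$ with small mesh, remove points one at a time, each time choosing a point $r$ whose neighbours $u<r<v$ satisfy $v-u\leq 2(t-s)/(N-1)$. The error of each removal is $S_{t-v}(\hat\delta\Xi)_{v,r,u}$, bounded as above, and the singular factor $(t-r)^{-\rho_i}$ is again integrable. Doing this on each interval of the dyadic partition $\mathcal P_k$ reduces the comparison between $\mathcal P$ and $\mathcal P_k$ to errors of order $|\mathcal P|^{\mu-1}$ up to the singular weight. A cleaner route is to bound $\sum_{[u,v]\in\mathcal P}S_{t-v}\big(\Xi_{v,u}-(\mathcal I\Xi)_{v,u}\big)$ using the local estimate \eqref{sewing_error} with $\beta=0$ on each piece, after first proving the Chen-type relation for the dyadic construction. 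The relation $(\mathcal I\Xi)_{t,s}=(\mathcal I\Xi)_{t,u}+S_{t-u}(\mathcal I\Xi)_{u,s}$ follows by taking partitions containing $u$ and using $S_{t-u}S_{u-v}=S_{t-v}$. I expect the main technical obstacle to be this uniqueness step, because of the time-dependent norms and the singularity at $r\to t$. I would use the semigroup bound with exponent $\rho_i$ and accept the integrable factor $(t-r)^{-\rho_i}$ throughout.

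For Item~\ref{sewing_it2}, let $s'<s$ and use the Chen relation to write
$$(\mathcal I\Xi)_{t,s'}-(\mathcal I\Xi)_{t,s}=S_{t-s}(\mathcal I\Xi)_{s,s'}.$$
Take $s_n=2^{-n}t$ and estimate the telescoping sum. First, $|S_{t-s_n}\Xi_{s_n,s_{n+1}}|_{\alpha+\beta,t}\lesssim (t-s_n)^{-\beta}s_{n+1}^{-\eta_1}s_n^{\sigma}$ via $\Xi=S\tilde\Xi$. The factor $(t-s_n)^{-\beta}$ is harmless for $n\geq1$; for $n=0$ one instead uses the $\beta$-smoothing of $S_{s_0-s_1}$ directly. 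Second, the sewing error from \eqref{sewing_error} is $s_{n+1}^{-\eta_{2,i}}s_n^{\mu-\rho_i-\beta}$. Both series are geometric because $\sigma>\eta_1$ and $\mu>\eta_{2,i}+\rho_i$. The first series needs $\sigma-\eta_1>0$ once the smoothing is placed on the outer semigroup. Summing yields the Cauchy property and \eqref{sewing_error2}, and monotone interpolation between the points $s_n$ handles general $s\downarrow0$.
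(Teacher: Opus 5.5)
Your Item~1 argument (dyadic refinement, $S_{t-v}(\hat\delta\Xi)_{v,r,u}=\sum_i S_{t-r}\tilde\Xi^i_{v,r,u}$, Riemann-sum control of $(t-r)^{-\rho_i-\beta}$) matches the paper's proof, and so does your telescoping along $s_n=2^{-n}t$ in Item~2. However, your estimate of the second series in Item~2 does not close under the stated hypotheses.

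You bound $S_{t-s_n}\bigl((\mathcal I\Xi)_{s_n,s_{n+1}}-\Xi_{s_n,s_{n+1}}\bigr)$ by applying \eqref{sewing_error} at level $\alpha+\beta$. This produces terms of order $s_n^{\mu-\rho_i-\beta-\eta_{2,i}}$, whose sum is finite only if $\mu>\eta_{2,i}+\rho_i+\beta$. That is strictly stronger than the assumed $\mu>\eta_{2,i}+\rho_i$, and the difference matters. In Theorem~\ref{prop:fixedpoint} one has $\mu-\rho_1-\eta_{2,1}=h-a-r-\nu>0$. But since $\beta>r-\alpha\geq r$, the exponent $h-a-r-\nu-\beta$ is negative as soon as $r>(h-a-\nu)/2$. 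This is permitted by \eqref{eq:fixed_point_exponents}, e.g.\ for $h-a$ close to $1$ and $\alpha,\nu$ close to $0$.

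The fix is the device you already use for the first series. For $n\geq 1$ one has $t-s_n\geq t/2$, so put all of the $\beta$-smoothing on the outer semigroup, $|S_{t-s_n}y|_{\alpha+\beta,t}\lesssim t^{-\beta}|y|_{\alpha,t}$. Then invoke \eqref{sewing_error} only with $\beta=0$, together with $V_{\alpha,s_n}\subset V_{\alpha,t}$. This gives $t^{-\beta}\sum_{n\geq1}s_{n+1}^{\mu-\rho_i-\eta_{2,i}}\lesssim t^{\mu-\rho_i-\beta-\eta_{2,i}}$. Only the single term $n=0$, where $S_{t-s_0}=\mathrm{id}$, uses \eqref{sewing_error} with the full $\beta$ on $[t/2,t]$. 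This is exactly how the paper obtains \eqref{sewing_error2}.

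A secondary point concerns partition independence, which the paper simply defers to Friz--Hairer. Your ``cleaner route'' bounds each $S_{t-v}\bigl(\Xi_{v,u}-(\mathcal I\Xi)_{v,u}\bigr)$ by \eqref{sewing_error} with $\beta=0$. That only yields $\sum_{[u,v]\in\mathcal P}|v-u|^{\mu-\rho_i}$, which vanishes as $|\mathcal P|\to 0$ only if $\mu-\rho_i>1$. This fails in the application, where $\mu-\rho_1=h-a\leq 1$. To make the route work:
\begin{itemize}
\item measure the local error in $|\cdot|_{\alpha-\max_i\rho_i,v}$, where it is $O(|v-u|^{\mu})$;
\item let $S_{t-v}$ pay $(t-v)^{-\max_i\rho_i}$ on intervals with $t-v\geq v-u$;
\item use the $\beta=0$ bound only on intervals with $t-v<v-u$, whose lengths grow geometrically away from $t$.
\end{itemize}
Your point-removal variant has the same problem, since the removed point may lie arbitrarily close to $t$.
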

\begin{proof}
 Fix $s<t\leq T$ and define partitions $\mathcal{P}_0=\{[s,t]\}$ and recursively by $\mathcal{P}_{n+1}= \{[u,(u+v)/2] \ : [u,v]\in \mathcal{P}_{n} \}  \cup\{[(u+v)/2,v] \ : [u,v]\in P_{n} \} $.
 We set $\mathcal{I}^{\mathcal{P}}\Xi_{t,s}= \sum_{[u,v]\in \mathcal{P}} S_{t-v}\Xi_{v,u}$.
 Then 
 $$\mathcal{I}^{\mathcal{P}_{n+1}}\Xi_{t,s}-\mathcal{I}^{\mathcal{P}_{n}}\Xi_{t,s}=- \sum_{[u,v]\in \mathcal{P}^n} S_{t-v} (\hat{\delta}\Xi)_{v, (v+u)/2,u} \ .
$$ 
Since
$S_{t-v} (\hat{\delta}\Xi)_{v, m,u}= \sum_i S_{t-m} \tilde{\Xi}^i_{v, m,u}$
\begin{align*}
|S_{t-m} \tilde{\Xi}_{v, m,u}|_{\alpha+\beta,t}&\lesssim\sum_i |t-m|^{-\beta-\rho_i} |\tilde{\Xi}^{i}_{v, m,u}|_{\alpha-\rho_i,t}\\
&\lesssim
 \llbracket\tilde\Xi \rrbracket_{\mu;\eta_2;T}
\sum_i |t-m|^{-\beta-\rho_i} |v-u|^{\mu} s^{-\eta_{2,i}}  \ .
\end{align*}
Note that $\mathcal{P}^{n}= \{s+ [k,k+1] \frac{t-s}{2^{n}} \ : k=0,..., 2^{n}-1 \}$ and therefore 
\begin{align*}
\sum_{[u,v]\in \mathcal{P}^n}| S_{t-v} (\hat{\delta}\Xi)_{v, (v+u)/2,u}|_{\alpha+\beta,t}
&\lesssim \sum_i s^{-\eta_{2,i}} \llbracket  \tilde{\Xi} \rrbracket_{\mu;\eta_2;T} \Big(\frac{t-s}{2^{n} }\Big)^{\mu}\sum_{k=0}^{2^{n}-1}
\Big|t-s-(k+1/2)\frac{t-s}{2^{n}}\Big|^{-\beta-\rho_i}  \\
& \lesssim \sum_i s^{-\eta_{2,i}} |t-s|^{\mu-\beta-\rho_i} \llbracket  \tilde{\Xi} \rrbracket_{\mu;\eta_2;T} \Big(\frac{1}{2^{n} }\Big)^{\mu}\sum_{k=0}^{2^{n}-1}
\Big|1-\frac{k+1/2}{2^{n}}\Big|^{-\beta-\rho_i}  \\
& \lesssim \sum_i s^{-\eta_{2,i}} |t-s|^{\mu-\beta-\rho_i} \llbracket  \tilde{\Xi} \rrbracket_{\mu;\eta_2;T} 2^{-n(\mu-1)}
\end{align*}
where we have used that $\rho_i+\beta<1$ in the last inequality. Since $\mu>1$ this is in turn summable and $(\mathcal{I}\Xi)_{t,s}:= \lim (\mathcal{I}^{\mathcal{P}_n}\Xi)_{t,s}$ satisfies \eqref{sewing_error}. The fact that this limit is independent of the sequence of partitions used and that $(\mathcal I\Xi)_{t,s}
=
(\mathcal I\Xi)_{t,u}
+
S_{t-u}(\mathcal I\Xi)_{u,s}$ follows exactly as in \cite{friz_hairer_20_introduction}.
Thus we conclude Item~\ref{sewing_it1} of the lemma.

For Item~\ref{sewing_it2},
let $t_n= 2^{-n}t$. Then\footnote{It follows that this is legitimate a posteriori from the bounds below.}
\begin{equs}
&\bigl|(\mathcal I\Xi)_{t,t_N}\bigr|_{\alpha+ \beta, t}
\leq \sum_{n=0}^{\infty} \bigl|S_{t-t_n}(\mathcal I\Xi)_{t_n,t_{n+1}}\bigr|_{\alpha+ \beta, t} \\
&\leq \bigl| (\mathcal I\Xi)_{t,t_{1}} - \Xi_{t,t_{1}} \bigr|_{\alpha+ \beta, t}+ \sum_{n=0}^{\infty}\bigl| S_{t-t_n} \Xi_{t_n,t_{n+1}}\bigr|_{\alpha+ \beta, t} 
+\sum_{n\geq 1}  \bigl| S_{t-t_n}\big((\mathcal I\Xi)_{t_n,t_{n+1}} - \Xi_{t_n,t_{n+1}}\big) \bigr|_{\alpha+ \beta, t}\label{eq:localsum}
\end{equs}
For the first term we have 
$$
\bigl| (\mathcal I\Xi)_{t,t_{1}} - \Xi_{t,t_{1}} \bigr|_{\alpha+ \beta, t}
\lesssim  \llbracket\tilde\Xi \rrbracket_{\mu;\eta_2;T}
\sum_i t^{\mu-\rho_i-\beta-\eta_{2,i}} \ .
$$
To estimate the first sum, note that for $n\geq 0$
$$\bigl|S_{t-t_n}\Xi_{t_n,t_{n+1}}\bigr|_{\alpha+\beta, t}= \bigl|S_{t-t_{n+1}} {\Xi}_{t_n,t_{n+1}}\bigr|_{\alpha+\beta, t}
\leq \llbracket\tilde{\Xi}\rrbracket_{\sigma;\eta_1;T}  (t-t_{n+1})^{-\beta}  t_n^{\sigma-\eta_1}
\lesssim t^{-\beta}  \llbracket\tilde{\Xi}\rrbracket_{\sigma;\eta_1;T}   t_n^{\sigma-\eta_1} \ .
$$
To bound the last sum, note that for $n\geq 1$ (since then $t\lesssim t-t_n$)
\begin{align*}
\bigl|
 S_{t-t_n}\big(
(\mathcal I\Xi)_{t_n,t_{n+1}}-\Xi_{t_n,t_{n+1}}\big)\bigr|_{\alpha+\beta,t}
 \lesssim
 t^{-\beta} \bigl|(\mathcal I\Xi)_{t_n,t_{n+1}}-\Xi_{t_n, t_{n+1}}\bigr|_{\alpha,t} 
 \lesssim  t^{-\beta} 
 \llbracket\tilde\Xi \rrbracket_{\mu;\eta_2;T} 
 \sum_{i} 
 t_{n+1}^{\mu-\eta_{2,i}-\rho_i} \ .
\end{align*}
By \eqref{eq:localsum} we conclude by summing over $n\geq 0$, respectively $n\geq 1$ that 
$
(\mathcal I\Xi)_{t,0}:=\lim_{n\to \infty}(\mathcal I\Xi)_{t,t_n}  \in V_{\alpha+\beta,t}
$
exists and satisfies the desired bound. That one can replace $t_n$ by any sequence converging to $0$ follows by a similar argument.
 \end{proof}
 
\begin{remark}
Let us note that allowing $\hat{\delta}$ to have contributions which behave differently, as in the above Definition is crucial when working with truly infinite-dimensional noises.
In particular \cite[Thm.~2.4]{GerasimovicsHairer}, which is also a reinterpretation of \cite{Gub_tindel_sewing}, is not sufficient in the setting of our article, even if we work
on a compact space (so that one Banach space is sufficient) and with regular initial conditions.
\end{remark}

\subsection{A Fixed-Point Theorem}\label{sec:fixed point}

We write \eqref{eq:illustrate} in mild form 
\begin{equ}\label{eq:mildSPDE}
u(t) = P_t u_0+ \sum_{d(I)<m} c_I \int_0^t  P_{t-s} (X^I u_s dW_s^{(I)}  ) 
\end{equ}
where $P_t$ is the semigroup associated to the $m$-th order operator $ \mathcal{R}$, see Remark~\ref{rem:heat_semi}, and $\partial_t W^{(I)}_t  =\xi^{(I)}$.
We fix the weights $e_\ell$, $p_a$ as in \eqref{eq:weights_examle}  and set $w_t:=e_{\ell+t}$. To simplify exponents, 
set $V_{\alpha,t}:= B^{m\alpha, w_t}_{p, \infty}$ for some fixed $p\geq 1$ and  $\| \cdot\|_{\alpha, t}:= \| \cdot \|_{B^{m\alpha,w_t}_{p,\infty}}$. Note that \eqref{eq:semi-group_bounds} follows from Proposition~\ref{prop:schauder}.
We shall construct solutions in the Banach space $\mathcal{B}_T^{\gamma,r, \theta}$ of processes $ (0,T] \ni t\mapsto v_t \in V_{r,t}$, which we equip with the norm
$$\vertiii{v}_{r, \gamma, \theta}:= \sup_{t\in (0,T]} \frac{\|v_t\|_{r, t}}{t^{-\theta}} +\sup_{
\substack{
s<t\in (0,T]\\
|t-s|<s
}}  \frac{\| v_t-v_s\|_{r, t}}{s^{-\theta-\gamma} |t-s|^\gamma} 
<\infty \ .
$$

\begin{theorem}\label{prop:fixedpoint}
Fix \(p\in[1,\infty]\), \(T>0\), \(h\in(0,1]\), $a\in [0,h)$, and
$\alpha_I\in \mathbb{R}$, 
$
\mathcal{K}:=\{K\ :\ c_K\neq 0\}\ 
$
and assume  $\alpha:=\min_{K\in\mathcal{K}} \alpha_K\leq 0$.
  Assume that 
\begin{equ}\label{conditions}
2(h-a) + \min \Big\{ \alpha+ \min_{K\in \mathcal{K}} \big(\alpha_K - d(K)/m \big) ,\ -\max_{K\in \mathcal{K}} \frac{d(K)}{m}  \Big\} >1 \ .
\end{equ}
For $\nu\in \big[0,h-a+\min_{K\in \mathcal{K}} (\alpha_K - d(K)/m )\big)$, let $r\in \mathbb{R}_+$ and $\beta_K\geq 0$ and $\gamma>0$ be such that\footnote{These conditions are satisfiable by \eqref{conditions}.}
\begin{equ}\label{eq:fixed_point_exponents}
-\min_{K\in \mathcal{K}} \big(\alpha_K - d(K)/m \big) <r <\big(\alpha +2(h-a) -1\big) \wedge (h-a-\nu) , \quad  1-(h-a)<\gamma<h-a-\max \beta_K \ 
\end{equ}
and for all $K\in \mathcal{K}$
\begin{equ}\label{eq:fixed_point_exponents2}
\frac{d(K)}{m}\vee (r-\alpha_K) <\beta_K<2(h-a)-1  \ .
\end{equ}
Then, for
 any 
$u_0\in V_{-\nu,0}$ and $W_t^{I}\in C^{h}([0,T], B^{m\alpha_I,p_a}_{\infty,\infty})$ 
equation \eqref{eq:mildSPDE} admits a unique solution
$u\in \mathcal{B}_T^{\gamma,r,r+\nu}$.
Furthermore, the solution map $$\prod_{d(I)<m} C^{h}([0,T], B^{m\alpha_I,p_a}_{\infty,\infty}) \times V_{-\nu,0} \to \mathcal{B}_T^{\gamma,r,r+\nu}, \qquad (W,u_0)\mapsto u$$
is Lipschitz in the initial condition $u_0$ and 
locally Lipschitz in the noise $W$.
\end{theorem}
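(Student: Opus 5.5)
The plan is a Banach fixed-point argument for the map
$$\mathcal{M}(v)_t := P_t u_0 + \sum_{K\in\mathcal{K}} c_K (\mathcal{I}\Xi^{K}[v])_{t,0}, \qquad \Xi^{K}[v]_{t,s}:= P_{t-s}\big(X^K v_s\cdot (W^{K}_t-W^{K}_s)\big),$$
on a ball of $\mathcal{B}_T^{\gamma,r,r+\nu}$. Here the rough integral is given by the mild sewing lemma, Lemma~\ref{lem:sewing}, with $S_t=P_t$ and $V_{\alpha,t}=B^{m\alpha,w_t}_{p,\infty}$, where $w_t=e_{\ell+t}$. The bounds \eqref{eq:semi-group_bounds} follow from Proposition~\ref{prop:schauder}, since the weights $e_\ell$ satisfy \eqref{assump:weight} locally uniformly in $\ell$.

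\textbf{Step 1: the initial datum.} Proposition~\ref{prop:schauder} gives $\|P_tu_0\|_{r,t}\lesssim t^{-(r+\nu)}\|u_0\|_{-\nu,0}$. For $|t-s|<s$, writing $P_tu_0-P_su_0=(P_{t-s}-1)P_su_0$ gives the H\"older part with exponent $\gamma$. I use $\gamma<1$ and $\gamma+r+\nu$ finite; regularity is lost through the second bound of Proposition~\ref{prop:schauder}.

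\textbf{Step 2: checking the sewing hypotheses.} For $v$ in the ball, $\tilde\Xi_{t,s}=X^Kv_s\cdot\delta W^K_{ts}$. By the derivative lemma, $X^Kv_s\in B^{mr-d(K),w_s}_{p,\infty}$. Since $r-d(K)/m+\alpha_K>0$, which is the left condition in \eqref{eq:fixed_point_exponents}, Theorem~\ref{prop:young} applies with weights $w_s\cdot p_a$. I place the product in $V_{\alpha_K,t}$ at regularity $m\alpha_K$. The polynomial loss $p_a$ is absorbed into $w_t/w_s$ via \eqref{weights_trade}, at the cost of a factor $(t-s)^{-a}$. This yields $\sigma=h-a$ and $\eta_1=r+\nu$.

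For the coboundary I compute
$$(\hat\delta\Xi)_{t,u,s}=P_{t-u}\Big[(X^Kv_u-X^K P_{u-s}v_s)\delta W^K_{tu}\Big] + P_{t-u}\Big[\big(P_{u-s}(X^Kv_s\,\delta W_{us}) - \ldots\big)\Big].$$
More cleanly, it splits as $S_{t-u}\big[(X^K v_u - X^K v_s)\delta W_{tu}\big]$ plus a commutator-type term $S_{t-u}\big[X^Kv_s\delta W_{tu} - \ldots\big]$ involving $(P_{u-s}-1)$. For the first piece, the H\"older seminorm of $v$ gives $|t-s|^{\gamma+h-a}s^{-(r+\nu)-\gamma}$. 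Moving regularity with $\beta_K$ accounts for the losses $\rho_i$ and the exponents in \eqref{eq:fixed_point_exponents2}. The requirement is $\mu=\gamma+h-a>1$, which is the lower bound on $\gamma$. For the second piece I use the second Schauder bound to trade $\beta_K$ units of regularity for $|u-s|^{\beta_K}$. This gives $\mu=h-a+\beta_K$ hmm — more precisely, combining two increments yields $\mu\approx 2(h-a)$-type exponents. I then check $\mu>\eta_{2,i}+\rho_i$ and $\beta+\rho_i<1$ using $\beta_K<2(h-a)-1$ and $r<\alpha+2(h-a)-1$. Lemma~\ref{lem:sewing} Item~\ref{sewing_it2} then gives $\|(\mathcal{I}\Xi)_{t,0}\|_{r,t}\lesssim t^{\kappa-(r+\nu)}\vertiii{v}$ with some $\kappa>0$, using $r<h-a-\nu$. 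The time-H\"older part uses the identity $(\mathcal I\Xi)_{t,0}=(\mathcal I\Xi)_{t,s}+P_{t-s}(\mathcal I\Xi)_{s,0}$ together with \eqref{sewing_error}.

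\textbf{Step 3: contraction and global existence.} The gain $t^{\kappa}$ makes $\mathcal{M}$ a contraction on $\mathcal{B}_{T_0}$ for small $T_0$, with linear dependence on $v$. Because the equation is linear, $T_0$ depends only on the norm of $W$ and not on $u_0$. I then iterate, restarting at $T_0/2$ with the regular datum $u_{T_0/2}$, to reach any $T$. The time-dependent weight $w_t$ gives room for finitely many restarts since $\ell$ increases by at most $T$.

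Lipschitz dependence follows from bilinearity in $(v,W)$, and uniqueness from the contraction. I expect the main obstacle to be Step 2: decomposing $\hat\delta\Xi$ into pieces $S_{t-u}\tilde\Xi^i$ with the correct $\rho_i,\eta_{2,i}$, and verifying that the exponent constraints \eqref{conditions}–\eqref{eq:fixed_point_exponents2} exactly close all inequalities of Lemma~\ref{lem:sewing}, including the weight bookkeeping.
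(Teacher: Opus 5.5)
Your architecture is the paper's: a contraction for $P_tu_0+\sum_K c_K(\mathcal I\Xi^K)_{t,0}$ with the same increment $\Xi^K_{t,s}=P_{t-s}(X^Kv_s\,\delta W^K_{ts})$, Schauder for the initial datum, linearity so that the step size does not depend on $u_0$, and restarts. However, the step you flag as the main obstacle is not just unfinished; the bookkeeping you sketch does not close.

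The correct identity is $\hat\delta\Xi_{tus}=P_{t-u}[(P_{u-s}-\mathrm{id})(X^Kv_s\,\delta W^K_{tu})+(X^Kv_s-X^Kv_u)\,\delta W^K_{tu}]$; your first display is wrong. Your treatment of the second piece via the $\gamma$-H\"older seminorm is fine ($\rho=0$, $\mu=\gamma+h-a>1$, $\eta_2=r+\nu+\gamma$). For the commutator piece you trade exactly $\beta_K$ units of regularity, so $\rho=\beta_K$ and $\mu=h-a+\beta_K$. The hypotheses do not give $\mu>1$. For example, $\mathcal K=\{0\}$, $\alpha_0=-0.01$, $h=1$, $a=0.05$, $\nu=0$, $r=0.02$, $\beta_0=0.04$ satisfy \eqref{conditions}--\eqref{eq:fixed_point_exponents2}, yet $h-a+\beta_0=0.99$. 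When $\beta_K$ is large they also do not give the constraint $\beta_{\mathrm{sew}}+\rho<1$ of Lemma~\ref{lem:sewing}, where $\beta_{\mathrm{sew}}$ is the regularity the sewing must recover. The ``$2(h-a)$'' heuristic does not fix this. The time exponent produced by $(P_{u-s}-\mathrm{id})$ equals the regularity you trade, which is a free parameter bounded above by the sewing step.

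The missing idea is to separate two roles.
\begin{itemize}
\item $\beta_K$ is the gap between $V_{r-\beta_K,t}$, where the Young product is placed, and the target $V_{r,t}$. The lower bound in \eqref{eq:fixed_point_exponents2} puts $r-\beta_K$ strictly below both $\alpha_K$ and $r-d(K)/m$; your placement in $V_{\alpha_K,t}$ is unjustified when $r-d(K)/m<\alpha_K$. This gap is recovered by the sewing parameter at cost $t^{-\beta_K}$.
\item The regularity traded in $(P_{u-s}-\mathrm{id})$ is a separate auxiliary $\kappa\in(1-(h-a),\,1-\max_K\beta_K-\gamma)$. Then $\rho_1=\kappa$, $\mu=h-a+\kappa>1$ and $\mu-\rho_1-\eta_{2,1}=h-a-r-\nu>0$.
\end{itemize}

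The upper end of the $\kappa$-interval comes from the H\"older seminorm, which you also do not close. In $\Gamma(t)-\Gamma(s)=(\mathcal I\Xi)_{t,s}+(P_{t-s}-\mathrm{id})\Gamma(s)$, the second term is not controlled by \eqref{sewing_error}. It needs $\|\Gamma(s)\|_{r+\gamma,s}$, i.e.\ Lemma~\ref{lem:sewing}, Item~\ref{sewing_it2}, with gain $\beta_K+\gamma$. That requires $\beta_K+\gamma+\kappa<1$ and yields $s^{h-a-\beta_K-\gamma-(r+\nu)}$. The $\kappa$-interval is nonempty precisely because $\gamma<h-a-\max_K\beta_K$; together with $\gamma>1-(h-a)$, this is where $\beta_K<2(h-a)-1$ enters.

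Finally, the small factor in the contraction is $T^{h-a-\max_K\beta_K}$, which comes from $\beta_K<h-a$. The condition $r<h-a-\nu$ plays a different role: it makes Item~\ref{sewing_it2} applicable, via $\sigma>\eta_1$ and $\mu>\eta_{2,i}+\rho_i$.
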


\begin{remark}
Let us comment on the numerical values in Theorem~\ref{prop:fixedpoint}. First consider the case that only $c_0\neq 0$. 
In the proof below we will see that the fact that a noise of time regularity $h$ belongs to a $p_a$-weighted space results in the `effective' time regularity of $W$ being
$h-a$. Thus the effective space-time regularity of $\xi= \partial_t W$ is $\alpha^{\fraks}:= m(h-a -1) + m\alpha$. Inserting this into \eqref{eq:young_regime} exactly gives the first inequality of \eqref{conditions}, since $\max_{K\in \mathcal{K}}\frac{d(K)}{m}=0$.

Similarly, in the general case the first term in the minimum of \eqref{conditions} guarantees that all products $X^{I}u \cdot W^{(I)}$ can be made sense of as Young products.

Finally, the second term in the minimum  of \eqref{conditions} arises due to the time-sewing step. Recalling that after `trading' time-regularity for space-regularity, 
$X^{I}u$ has time-regularity  $(h-a)-\max_{K\in \mathcal{K}} \frac{d(K)}{m}$, while $\Xi^{(I)}$ has time regularity $h-a$
resulting in the 
condition $2(h-a)-\max_{K\in \mathcal{K}} \frac{d(K)}{m}>1$.
\end{remark}
\begin{proof}
We apply the Banach fixed-point theorem in the space $B_T$ for $T>0$ sufficiently small,
to the map
\begin{equ}
\Phi(u)(t):=P_tu_0+\sum_{d(I)<m}c_{I}\Gamma_I(u)(t),
\qquad
\Gamma_I(u)(t):=\int_0^t P_{t-s}(X^Iu_s \cdot dW_s^{(I)})  \label{fix point}
\end{equ}
where the latter is obtained as a Young integral using Lemma~\ref{lem:sewing}. 
We first estimate the evolution of the initial condition.
Assume that $u_0\in V_{-\nu,0}$, then by Proposition~\ref{prop:schauder}
$$\|P_t u_0\|_{ {r},t} 
\lesssim t^{-{r}-\nu} \|u_0\|_{-\nu,t} \lesssim t^{- {r}-\nu} \|u_0\|_{-\nu,0} $$
as well as 
\begin{align*}
\|P_s u_0- P_t u_0\|_{ {r},t}&= \|(\id -P_{t-s})P_s u_0\|_{ {r},t} 
\lesssim |t-s|^{\gamma}\|P_s u_0\|_{{ {r}+\gamma, t}} \\
&
\lesssim|t-s|^{\gamma} s^{- {r}-\gamma-\nu}\|u_0\|_{-\nu,0}
\end{align*}
Thus we see that $P_t u_0\in \mathcal{B}$, due to the choice $\theta= {r}+\nu$.

Next we estimate the integral terms by applying Lemma~\ref{lem:sewing}. To lighten notation we simply write $\vertiii{W }$ without any subscript to denote maximum over $I$ of the 
$C^{h}([0,T], B^{m\alpha_I,p_a}_{\infty,\infty}) $-norm  of $W^{(I)}$. We write $\beta=\max_{K\in \mathcal{K}}\beta_K$.
Then, set
\begin{equ}\label{eq:increment}
\Xi^{(I)}_{t,s}:=P_{t-s}(X^{I}u_s \cdot W^{(I)}_{t,s}) \qquad \text{for} \qquad W^{(I)}_{t,s}:=W^{(I)}_{t}-W^{(I)}_s\ .
\end{equ}
We have the bound
\begin{equs}
|\tilde{\Xi}^{(I)}_{t,s} |_{r-\beta_I,t}&= \|X^{I}u_s \cdot (W^{(I)}_t-W^{(I)}_s )\|_{r-\beta_I,t}
\lesssim |t-s|^{-a} \|X^{I}u_s \cdot (W^{(I)}_t-W^{(I)}_s )\|_{B^{m(r-\beta_I),p_aw_s}_{p,\infty}}\\
&\lesssim   |t-s|^{h-a}  s^{-\theta}    \vertiii{u}_{ {r}, \gamma, \theta}\vertiii{W}\ \label{fixpoint_ineq}
\end{equs}
where in the first inequality we used \eqref{weights_trade} and in the second one Young's multiplication theorem, Theorem~\ref{prop:young}, where 
 \eqref{eq:fixed_point_exponents} guarantees the exponents are in the right range.

Note that for
$$ \tilde{\Xi}^{{(I)},1}_{tus}:=
 (P_{u-s}-\id) (X^{I}u_s \cdot W^{(I)}_{t,u}),
\qquad \tilde{\Xi}^{{(I)},2}_{tus}:= (X^{I}u_s-X^{I}u_u)\cdot W^{(I)}_{t,u}, 
$$
we have
\begin{align*}
\hat{\delta}\Xi^{(I)}_{tus}&= P_{t-s}(X^{I}u_s \cdot W^{(I)}_{t,s} ) - P_{t-u}(X^{I}u_u \cdot W^{(I)}_{t,u})- P_{t-s}(X^{I}u_s \cdot W^{(I)}_{u,s})  \\
&= P_{t-u} \big( P_{u-s}(X^{I}u_s \cdot W^{(I)}_{t,u})-X^{I}u_u\cdot  W^{(I)}_{t,u}\big)=  P_{t-u}(\tilde{\Xi}^{(I),1}_{tus}+\tilde{\Xi}^{(I),2}_{tus}) \ .
\end{align*}

We shall use an auxiliary parameter
$\kappa\in \big(1-(h-a), \min\{\gamma,1-\beta-\gamma\}\big)$, which exists by the first inequality of \eqref{eq:fixed_point_exponents2}.

Furthermore, similarly to \eqref{fixpoint_ineq}
\begin{align*}
| \tilde{\Xi}^{(I),1}_{tus} |_{r-\beta_I-\kappa,t}&=
\| (P_{u-s}-\id) (X^{I}u_s \cdot W^{(I)}_{t,u}) \|_{r-\beta_I-\kappa,t}\\
&\lesssim |u-s|^\kappa \| (X^{I}u_s \cdot W^{(I)}_{t,u}) \|_{r-\beta_I,t} \\
&\lesssim |u-s|^\kappa |t-s|^{h-a} s^{-\theta} \vertiii{u}_{ {r}, \gamma, \theta} \vertiii{W}\\
&\lesssim |t-s|^{\kappa+h-a} s^{-\theta} \vertiii{u}_{ {r}, \kappa, \theta} \vertiii{W}
\end{align*}
and similarly
\begin{align*}
| \tilde{\Xi}^{(I),2}_{tus} |_{r-\beta_I,t}&=
\| (X^{I}u_s-X^{I}u_u) \cdot W^{(I)}_{t,u} \|_{r-\beta_I,t}\\
&\lesssim |u-s|^\kappa |t-u|^{h-a}s^{-\theta-\kappa} \vertiii{u}_{ {r}, \kappa, \theta} \vertiii{W}\\
&\lesssim |t-s|^{\kappa +h-a}s^{-\theta-\kappa} \vertiii{u}_{{r}, \gamma, \theta} \vertiii{W}
\end{align*}
where we used that 
$
\|X^{I}u_u-X^{I}u_s\|_{ {r}-d(I)/m,u}
\lesssim
\vertiii{u}_{ {r},\gamma,\theta}
s^{-\theta-\kappa}|u-s|^\kappa.
$
Thus, we next check the assumptions on the numerical exponents of the sewing lemma, Lemma~\ref{lem:sewing}, which we list for the reader's convenience.

%


\begin{table}[H]
\centering
\begin{tabular}{|c|c|c|c|c|c|c|c|}
\hline
\textbf{Sewing} 
& $\sigma$ 
& $\mu$ 
& $\rho_1$ 
& $\rho_2$ 
& $\eta_1$ 
& $\eta_{2,1}$ 
& $\eta_{2,2}$ \\
\hline
\textbf{SPDE} 
& $h-a$ 
& $\kappa+h-a$ 
& $\kappa$ 
& $0$ 
& $\theta= {r}+\nu$ 
& $\theta= {r}+\nu$ 
& $\theta+\kappa= {r}+\nu+\kappa$ \\
\hline
\end{tabular}
\end{table}

We have to check the following conditions to apply the sewing lemma
$$0<\sigma\leq 1 < \mu, \qquad \beta+\rho_i<1, \qquad \sigma-\eta_1>0, \qquad \mu-\rho_i-\eta_{2,i}>0\ .$$
The first inequalities for $\sigma$ are satisfied by the assumed range of $h$ and $a$. The first inequalities for $\mu$ is satisfied by the choice of $\kappa$. The second condition for $i=1$ holds again by the choice of $\kappa$, while the case $i=2$ is clear. The third condition holds by the first inequality of \eqref{eq:fixed_point_exponents}. The fourth condition agrees for both $i\in \{1,2\}$ with the third.

Thus by Lemma~\ref{lem:sewing} the right-hand side of \eqref{fix point} is well defined by setting
$
\Gamma_I(u)(t):=
(\mathcal I\Xi^{(I)})_{t,0} \ .
$
Furthermore, by \eqref{sewing_error2}
\begin{equs}
\|\Gamma_I(u)(t)\|_{ {r},t}
&\lesssim \Big(t^{\sigma-\beta-\eta_1} + \sum_{i} t^{\mu-\rho_i-\beta-\eta_{2,i}} \Big)\vertiii{u}_{ {r},\gamma,\theta} \vertiii{W}
\lesssim
t^{h-a-\beta-\theta} \vertiii{u}_{ {r},\gamma,\theta} \vertiii{W} \ ,
\end{equs}
and thus \begin{equation}\label{eq:Gamma-size-fixedpoint}
\sup_{0<t\leq T}
t^\theta\|\Gamma_I(u)(t)\|_{ {r},t}
\lesssim
T^{h-a-\beta} \vertiii{u}_{ {r},\gamma,\theta} \vertiii{W} \ .
\end{equation}
To estimate the increment, write
$
\Gamma_I(u)(t)-\Gamma_I(u)(s)
=
(\mathcal I\Xi^{(I)})_{t,s}
+
(P_{t-s}-\id)\Gamma_I(u)(s)
$,
we bound the first term using that 
\begin{align}
\|(\mathcal I\Xi^{(I)})_{t,s}
  -\Xi^{(I)}_{t,s}\|_{ {r},t}
&\lesssim
\Bigl(
s^{-\theta}|t-s|^{h-a-\beta}
+
s^{-\theta-\kappa}|t-s|^{h-a+\kappa-\beta}
\Bigr) \vertiii{u}_{ {r},\gamma,\theta} \vertiii{W},
\end{align}
together with \eqref{fixpoint_ineq} to obtain that 
\begin{align*}
\|(\mathcal I\Xi^{(I)})_{t,s}\|_{ {r},t}
&\lesssim
\Bigl(
s^{-\theta}|t-s|^{h-a-\beta}
+
s^{-\theta-\kappa}|t-s|^{h-a+\kappa-\beta}
\Bigr) \vertiii{u}_{ {r},\gamma,\theta} \vertiii{W}\\
&
\lesssim
T^{h-a-\beta}
s^{-\theta-\gamma}|t-s|^\gamma  \vertiii{u}_{ {r},\gamma,\theta} \vertiii{W}\ .
\end{align*}
For the remaining terms using \eqref{sewing_error2} in the second inequality gives
\begin{align*}
\|(P_{t-s}-\id)\Gamma_I(u)(s)\|_{ {r},t}
&\lesssim
|t-s|^\gamma
\|\Gamma_I(u)(s)\|_{ {r}+\gamma,s}\\
&\lesssim
|t-s|^\gamma
s^{h-a-\beta-\gamma-\theta} \vertiii{u}_{ {r},\gamma,\theta} \vertiii{W}\\
&\lesssim
T^{h-a-\beta}
s^{-\theta-\gamma}|t-s|^\gamma \vertiii{u}_{ {r},\gamma,\theta} \vertiii{W} \ ,
\end{align*}
which together with \eqref{eq:Gamma-size-fixedpoint} proves that 
$
\vertiii{\Gamma_I(u)}_{ {r},\gamma,\theta}
\lesssim
T^{h-a-\beta}
\vertiii{u}_{ {r},\gamma,\theta} \vertiii{W} .
$

Since $h-a-\beta>0$, the map
$\Phi$ is a contraction on
$\mathcal B_\tau^{\gamma, {r},r+\nu}$ for 
$\tau>0$ sufficiently small, depending on $\vertiii{W}$ but not the norm of the initial condition.
 Restarting the equation a finite number of times gives 
a unique solution on $(0,T]$.
The proof of the continuity claim follows by standard arguments.
\end{proof}

\subsection{The Cole--Hopf transform for Rockland operators}
Throughout this section we fix a Rockland operator $\mathcal{R}$ of
homogeneous degree $m$ and reserve $r_I\in\mathbb{R}$ for its coefficients as in \eqref{eq:form rockland}.
We next study how the Cole--Hopf transform behaves for Rockland operators. 
For $I\neq 0$, let $\mathbf{w}(I)$ be the word containing $i_1$ copies of $1$,
followed by $i_2$ copies of $2$, and so on. If $A$ is a subset of these letters, $I_A$ denotes the
multi-index of the corresponding subword, with the induced order.
We write $P(I)$ for the set of partitions of the letters of 
${\mathbf{w}}(I)$ and define
\begin{equation}\label{eq:bell-polynomial}
 B_0(v):=1,\qquad
 B_I(v):=e^{-v}X^I(e^{v})
 = \sum_{n\geq 1} B^{(n)}_I(v), \qquad 
 B^{(n)}_I(v):=
 \sum_{\substack{\pi\in P(I)\\ |\pi|=n} }
       \prod_{A\in\pi}X^{I_A}v .
\end{equation}
Next set for $d(K)<m$
  $$A_K(v):=\sum_{n\geq 1} A^{(n)}_K(v), \qquad   
  A^{(n)}_K(v):=
  \sum_{\substack{I\geq K\\d(I)=m}}
      r_I\binom IK  B^{(n)}_{I-K}(v)
      $$
     and note that $ A_0(v)=\mathcal{R}v+\mathcal Z_{\mathcal{R}}(v)$ for 
     \begin{equation}
 \mathcal Z_R(v):=
 \sum_{n\geq 2}
 \mathcal Z^{n}_R(v),
 \qquad 
 \mathcal Z^{(n)}_R(v):=
 \sum_{d(I)=m}r_I
 \sum_{\substack{\pi\in  P(I)\\|\pi|=n}}
\prod_{A\in\pi}X^{I_A}v . 
 \label{eq:ZR-general}
\end{equation}

\begin{lemma}\label{lem:coleHopf}
Assume $v,\xi,f\in C^{\infty} (\mathbb{G})$ satisfy
$
  \mathcal{R}v=\xi+f.
$
If $u$ satisfies
\begin{equation}
  (\partial_t+  \mathcal{R})u
   =u(\xi-C) \ ,
  \label{eq:pam-epsilon-section33}
\end{equation}
then $w:=e^{-v}u$ satisfies
\begin{equation}\label{transformed_eq}
 (\partial_t+\mathcal R)w
 =
 -\sum_{0<d(K)<m}A_K(v)X^Kw
 -\bigl(\mathcal Z_{\mathcal R}(v)+f+C\bigr)w.
\end{equation}

\end{lemma}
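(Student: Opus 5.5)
The plan is a direct computation. Write $u=e^{v}w$ and expand $\mathcal R(e^{v}w)=\sum_{d(I)=m}r_IX^I(e^{v}w)$ using the Leibniz rule for the left-invariant vector fields. Each $X_j$ is a derivation. Hence for the ordered word $\mathbf w(I)=(j_1,\dots,j_{|I|})$ we get $X^I(e^vw)=X_{j_1}\cdots X_{j_{|I|}}(e^vw)$. Distributing the derivations, this becomes a sum over subsets $S$ of the letters: the letters in $S$ fall on $e^v$, the others on $w$, and both subwords keep their induced order. This gives
\[
X^I(e^vw)=\sum_{S\subseteq \mathbf w(I)} X^{I_S}(e^{v})\,X^{I_{S^c}}w .
\]

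The first step is to organise this sum by the multi-index $K$ of the complementary subword. Here lies the main subtlety, because the vector fields do not commute. A subword of $\mathbf w(I)$ with induced order is again of the form $\mathbf w(K)$, since $\mathbf w(I)$ lists the letters in nondecreasing order. So $X^{I_{S^c}}=X^{K}$ and $X^{I_S}=X^{I-K}$ exactly, with no commutators appearing. For fixed $K\le I$, the number of subsets $S^c$ with multi-index $K$ is $\binom IK=\prod_j\binom{i_j}{k_j}$. Therefore
\[
X^I(e^vw)=\sum_{K\le I}\binom IK X^{I-K}(e^v)\,X^Kw=e^v\sum_{K\le I}\binom IK B_{I-K}(v)\,X^Kw,
\]
using $B_J(v)=e^{-v}X^J(e^v)$.

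The second step identifies $B_J$ with the partition sum in \eqref{eq:bell-polynomial}. This is Faà di Bruno for ordered derivations, proved by induction on $|J|$. Write $X^{J}=X_{j_1}X^{J'}$ and apply $X_{j_1}$ to $e^{v}\sum_\pi\prod_{A\in\pi}X^{J'_A}v$. The derivative either hits $e^v$, producing the new singleton block $\{j_1\}$, or hits one factor $X^{J'_A}v$, producing $X_{j_1}X^{J'_A}v$. Since $j_1$ is the smallest letter, it sits first in the induced order, so $X_{j_1}X^{J'_A}=X^{(J'_A\cup\{j_1\})}$. This enumerates the partitions of $\mathbf w(J)$ bijectively.

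Summing with coefficients $r_I$, we get $e^{-v}\mathcal R(e^vw)=\sum_{d(K)<m}A_K(v)X^Kw+\mathcal R w$. Here the $K=I$ terms, with $B_0=1$, reassemble $\mathcal R w$. Every remaining term has $d(K)<m$, since $d(I)=m$ and $K\neq I$. The $K=0$ term is $A_0(v)w=(\mathcal Rv+\mathcal Z_{\mathcal R}(v))w$. This holds because the one-block partition gives $\sum r_IX^Iv=\mathcal Rv$, and the partitions with $n\ge2$ blocks give $\mathcal Z_{\mathcal R}$.

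Finally, $\partial_t u=e^v\partial_t w$ because $v$ is time independent. Dividing \eqref{eq:pam-epsilon-section33} by $e^v$ gives
\[
\partial_tw+\mathcal Rw+\sum_{0<d(K)<m}A_KX^Kw+(\mathcal Rv+\mathcal Z_{\mathcal R}(v))w=(\xi-C)w .
\]
Substituting $\mathcal Rv=\xi+f$ cancels $\xi$ and yields \eqref{transformed_eq}. The only genuinely delicate point is the ordering argument in the first two steps. Everything else is bookkeeping.
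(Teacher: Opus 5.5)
Your proposal is correct and follows the same route as the paper: expand $e^{-v}\mathcal R(e^{v}w)$ via the Leibniz rule, identify the $K=0$ coefficient as $A_0(v)=\mathcal Rv+\mathcal Z_{\mathcal R}(v)$, and substitute $\mathcal Rv=\xi+f$. You also supply the combinatorial details that the paper compresses into ``repeated use of the Leibniz rule'': the ordered Leibniz expansion, the fact that subwords of the nondecreasing word $\mathbf w(I)$ are again of the form $\mathbf w(K)$, the count $\binom IK$, and the Fa\`a di Bruno identity for $B_J$.
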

\begin{proof}
Repeated use of the Leibniz rule together with the definitions in the preamble of the lemma show that
\begin{equs}\label{eq:conjugated-rockland}
 e^{-v}\mathcal   \mathcal{R}(e^{v}w)
 &=\mathcal{R}w+ \sum_{d(K)< m}A_K(v)X^Kw
 &=  \mathcal Rw  +A_0(v) w+  \sum_{0<d(K)< m}A_K(v)X^Kw \ .
\end{equs}
Since on the other hand 
$\partial_t w +e^{-v} \mathcal{R}(e^{v} w)= e^{-v} (\partial_t+  \mathcal{R})u = e^{-v} u(\xi-C)=w (\xi-C)$
we conclude that 
$$
\partial_t w + \mathcal{R}( w)=   w (\xi-C-A_0(v))-  \sum_{0<d(K)< m}A_K(v)X^K w 
$$
which agrees with \eqref{transformed_eq} since $
A_0(v)=  \mathcal{R}v+\mathcal Z_ \mathcal{R}(v)= \xi+f+\mathcal Z_\mathcal{R}(v) \ .
$
\end{proof}

\begin{lemma}
\label{prop:only-scalar-counterterm}
Recalling $\delta_R$ from \eqref{eq:delta-R}, let $s>0$ be such that 
$
  2s>m-\delta_R
 $. Let $a\in \mathbb{R}$ and $\eta>0$.
\begin{enumerate}
 \item For every $K\neq 0$ and $n\geq 1$ the following is a locally uniformly continuous map
 \begin{equation}
  B_{\infty,\infty}^{s,p_a} \to  B_{\infty,\infty}^{s-m+d(K)+(n-1)\delta_R-\eta,p_{na}}, \qquad v\mapsto A^{(n)}_{K}(v) \ .
  \label{eq:AK-nonlinear-regularity1}
 \end{equation}
 \item For every $n\geq 3$ the following is a locally uniformly continuous map
  \begin{equation}
  B_{\infty,\infty}^{s,p_a}\to  B_{\infty,\infty}^{s-m+(n-1)\delta_R-\eta,p_{na}}, \qquad v\mapsto \mathcal{Z}_{\mathcal{R}}^{(n)}(v) \ .
  \label{eq:AK-nonlinear-regularity2}
 \end{equation}
\end{enumerate}
\end{lemma}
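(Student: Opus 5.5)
The plan is to reduce both items to repeated use of the Young multiplication theorem, Theorem~\ref{prop:young}. The only real input is a combinatorial count of the homogeneous degrees carried by the factors. Expanding the definitions, $A^{(n)}_K(v)$ is a finite linear combination, with coefficients $r_I\binom IK$, of products
\[
\prod_{A\in\pi}X^{J_A}v,\qquad \pi\in P(I-K),\ |\pi|=n,\ J_A:=(I-K)_A .
\]
Here $d(I)=m$, $r_I\neq0$, and $\sum_{A\in\pi}d(J_A)=m-d(K)$. Similarly, $\mathcal Z^{(n)}_{\mathcal R}(v)$ is a combination of such products with $K=0$. By the lemma on boundedness of $X^I$ between Besov spaces, each factor satisfies $X^{J_A}v\in B^{s-d(J_A),p_a}_{\infty,\infty}$, with norm bounded by that of $v$ and depending linearly on $v$.

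The key step is the degree count. Every block $A$ is nonempty and consists of letters $j$ with $i_j>0$ for some $I$ with $r_I\neq0$. The case $|I|\geq2$ is the relevant one: if $|I|=1$, then $I-K$ is empty for $K\neq0$, and for $K=0$ there is only one block, whereas $n\geq3$. Hence $d(J_A)\geq\delta_{\mathcal R}$ for every block. Consequently, for each block,
\[
d(J_A)\le m-d(K)-(n-1)\delta_{\mathcal R},
\]
since the remaining $n-1$ blocks carry at least $\delta_{\mathcal R}$ each. The least regular factor therefore has regularity at least $s-m+d(K)+(n-1)\delta_{\mathcal R}$, which is the target exponent before the $\eta$-loss. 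For well-posedness of the products, note that the sum of the regularities of any two factors is
\[
2s-d(J_A)-d(J_B)\geq 2s-(m-d(K)-(n-2)\delta_{\mathcal R}).
\]
In item 1 we have $K\neq0$ and $K\leq I$ with $|I|\geq2$, so $d(K)\geq\delta_{\mathcal R}$. In item 2 we have $n\geq3$. In both cases this gives a lower bound $2s-m+\delta_{\mathcal R}>0$ by the hypothesis $2s>m-\delta_{\mathcal R}$.

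To combine the factors, order them by decreasing regularity and multiply iteratively. A partial product has regularity equal to the minimum of the regularities of its factors, and the pairwise positivity above ensures that each new (less regular) factor still satisfies the hypothesis $\alpha+\beta>0$ of Theorem~\ref{prop:young}. If two exponents coincide or one lands in $\triangle$, I first lower each one by a small amount, at most $\eta/n$, using Corollary~\ref{prop:embedding}. This is the origin of the $\eta$-loss, and it also secures the strict inequality $\beta>\alpha$ required in Theorem~\ref{prop:young}. Factors of positive regularity are handled by the same theorem. With $p_1=p_2=p=\infty$ and weights multiplying, $n$ factors each weighted by $p_a$ yield the weight $p_a^n=p_{na}$. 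Finally, $v\mapsto A^{(n)}_K(v)$ and $v\mapsto\mathcal Z^{(n)}_{\mathcal R}(v)$ are restrictions of bounded $n$-linear maps to the diagonal, so they are locally Lipschitz, and in particular locally uniformly continuous.

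The main obstacle I expect is the degree bookkeeping: verifying that $\delta_{\mathcal R}$ genuinely lower-bounds every block degree, including the edge cases $|I|=1$ and $d(K)\ge\delta_{\mathcal R}$, and that the iterated Young products never violate the strict conditions $\beta>\alpha$ and $\alpha,\beta\notin\triangle$. The small $\eta$-perturbations handle the latter.
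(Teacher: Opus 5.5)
Your proposal is correct and follows the same route as the paper's proof. Both use the degree count $\sum_A d(J_A)=m-d(K)$ with every block of degree at least $\delta_{\mathcal R}$, then the pairwise positivity $2s-m+d(K)+(n-2)\delta_{\mathcal R}>0$, then products via Theorem~\ref{prop:young}, with $\eta$ absorbing the exclusion of $\triangle$. You additionally make explicit several points the paper leaves implicit: why $K\neq0$ forces $d(K)\geq\delta_{\mathcal R}$, the $|I|=1$ edge cases, the ordering of the iterated Young products, and local Lipschitz continuity via multilinearity.
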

\begin{proof}
Each term contributing to $A^{(n)}_K(v)$ in \eqref{eq:bell-polynomial} arises from a partition
$\pi=\{A_1,\ldots,A_n\}$. Since for each term contributing $d(I_{A_j})\geq\delta_{\mathcal{R}}$ and 
\begin{equation}\label{eq:degree-sum-AK}
 \sum_{j=1}^n d(I_{A_j})=m-d(K),
\end{equation} and 
for two distinct blocks in a term in $A_K$, using $K\neq 0$ and
\eqref{eq:degree-sum-AK},
\[
 2s-d(I_{A_j})-d(I_{A_k})
 \geq2s-m+d(K)+(n-2)\delta_{\mathcal R}>0.
\]
Thus \eqref{eq:AK-nonlinear-regularity1} makes sense as a Young product by Theorem~\ref{prop:young}, where the $\eta>0$ is present in order to exclude regularities belonging to $\triangle$.

The argument for \eqref{eq:AK-nonlinear-regularity2} follows similarly.
\end{proof}

\subsubsection{The proof of Theorem~\ref{thm:first singular}}\label{sec:proof_of_singularPAM}
Denote by $G$ the fundamental solution of the operator $1+\mathcal R$ and set
$$
v_\varepsilon=\xi_\varepsilon*G, \qquad C_\varepsilon
 :=
\mathbb -\E \left[
\mathcal Z_{\mathcal R}^{(2)}(v_\varepsilon)(e) \right] .
$$
We shall obtain the solution in $\mathcal{B}_T^{\gamma,r, r+\nu}$
where the parameters $\gamma,r$ will be chosen according to \eqref{eq:fixed_point_exponents} and \eqref{eq:fixed_point_exponents2} depending on $\zeta$.

\begin{proof}
Note that $\mathcal Rv_\varepsilon=\xi_\varepsilon-v_\varepsilon$ and 
$
v_\varepsilon\to v:=\xi*G
$
in $
B_{\infty,\infty}^{\zeta+m-\kappa,p_a}
$
for every \(a,\kappa>0\).
Instead of solving for $u_\eps$ directly, we consider $w_\eps=e^{-v_\eps}u_\eps$ as in Lemma~\ref{lem:coleHopf} which solves
\begin{equation}
(\partial_t+\mathcal R)w_\varepsilon
 =
\sum_{0<d(K)<m}
\xi_\varepsilon^{(K)}X^Kw_\varepsilon
+\xi_\varepsilon^{(0)}w_\varepsilon,
\qquad
w_\varepsilon(0)=e^{-v_\varepsilon}u_0.
\label{eq:first-singular-transformed}
\end{equation}
for
$$
\xi_\varepsilon^{(0)}
 :=
-\widehat{\mathcal Z}_\varepsilon
 -\sum_{n\geq 3}\mathcal Z_{\mathcal R}^{(n)}(v_\varepsilon)
 +v_\varepsilon,
\qquad
\xi_\varepsilon^{(K)}
 :=
-A_K(v_\varepsilon),\quad K\neq 0,
$$
and 
$
\widehat{\mathcal Z}_\varepsilon
 :=
\mathcal Z_{\mathcal R}^{(2)}(v_\varepsilon)+C_\varepsilon \ .
$
We postpone the proof of the following claim to the end of this section.
\begin{claim}\label{claim}
For every $a,\kappa>0$ and 
$
\tilde{\alpha}_0:=\min\{2\zeta+m,\zeta+\delta_{\mathcal{R}}\}$ and $
\tilde{\alpha}_K:=\zeta+d(K)$ for $K\neq0 \ ,$ it holds that 
\begin{equation}
\xi_\varepsilon^{(K)}
\to \xi^{(K)}
\label{eq:first-singular-coefficients}
\end{equation}
in $B_{\infty,\infty}^{\tilde{\alpha}_K-\kappa,p_a}$ in probability.
\end{claim}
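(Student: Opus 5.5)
The plan is to split the coefficients into three groups, in increasing order of difficulty: the deterministic-continuity terms $\xi^{(K)}_\eps=-A_K(v_\eps)$ for $K\neq0$; the higher Bell terms $\mathcal Z^{(n)}_{\mathcal R}(v_\eps)$ for $n\geq3$ together with the linear term $v_\eps$; and the renormalised quadratic term $\widehat{\mathcal Z}_\eps$. The first two groups will be handled pathwise. The starting point is that $v_\eps\to v$ in $B^{\zeta+m-\kappa',p_{a'}}_{\infty,\infty}$ in probability for all $a',\kappa'>0$. This follows from Lemma~\ref{lem:kolmogorov} with $H$ irrelevant (time-independent field), Gaussian hypercontractivity to get all moments, and the covariance bound $|C(\phi^\lambda)|\lesssim\lambda^{2\zeta}$. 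The weight $p_{a'}$ is admissible because $p_{a'}^{-1}\in L^p$ for $p$ large, and we then embed via Corollary~\ref{prop:embedding}. The smoothing by $G$ is handled by Proposition~\ref{prop:schauder}-type estimates, writing $G=\int_0^\infty e^{-t}K_t\,dt$.

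With this convergence in hand, Lemma~\ref{prop:only-scalar-counterterm} applies with $s=\zeta+m-\kappa'$. Its hypothesis $2s>m-\delta_{\mathcal R}$ is exactly $\zeta>-(m+\delta_{\mathcal R})/2$, which is one of the constraints in $\zeta_\star$. By local uniform continuity, $A^{(n)}_K(v_\eps)\to A^{(n)}_K(v)$ in $B^{s-m+d(K)+(n-1)\delta_{\mathcal R}-\eta,p_{na'}}_{\infty,\infty}$. Since $s-m+d(K)=\zeta+d(K)-\kappa'$ and $(n-1)\delta_{\mathcal R}\ge0$, the embeddings give convergence in $B^{\tilde\alpha_K-\kappa,p_a}$ once $na'\le a$ and $\kappa'+\eta\le\kappa$; note the sum over $n$ is finite. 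The same argument gives $\mathcal Z^{(n)}_{\mathcal R}(v_\eps)\to\mathcal Z^{(n)}_{\mathcal R}(v)$ for $n\geq3$ in regularity $\zeta+2\delta_{\mathcal R}-\kappa\ge\tilde\alpha_0-\kappa$. Finally $v_\eps$ itself has regularity $\zeta+m-\kappa$, which is larger still.

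The main obstacle is $\widehat{\mathcal Z}_\eps=\mathcal Z^{(2)}_{\mathcal R}(v_\eps)+C_\eps$. It is a finite sum of products $r_I X^{I_A}v_\eps\,X^{I_B}v_\eps$ with $d(I_A)+d(I_B)=m$, and these products are not Young-defined in general. I would treat it by second-chaos stochastic estimates rather than pathwise. Write each product as a Wick product plus its expectation, so that $\widehat{\mathcal Z}_\eps$ is a centred element of the second Wiener chaos. By stationarity this element is stationary, and $C_\eps$ exactly cancels the constant. I would then verify the hypotheses of Lemma~\ref{lem:kolmogorov}: bound $\E|\widehat{\mathcal Z}_\eps(\varphi^{(n)}_x)|^2$ by integrating the product of two covariances $X^{I_A}\tilde X^{I_{A'}}(C*G*\tilde G)$ against $\varphi^{(n)}\otimes\varphi^{(n)}$, and use the bounds $|X^IC(x)|\lesssim |x|^{2\zeta-d(I)}$ propagated through $G$. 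This should give decay $\mfr^{-2n\tilde\alpha_0}$, where $2\zeta+m$ comes from the power counting $2(\zeta+m)-m$. The smallest single-factor degree contributes $\delta_{\mathcal R}$, which gives the competing exponent $\zeta+\delta_{\mathcal R}$ from the cross term between the rough and the smooth factor. Integrability of the squared covariance near the origin and at infinity is where the conditions $\zeta>-(2m+|\fraks|)/4$ and $\zeta>-|\fraks|/2-\delta_{\mathcal R}$ should enter, and I expect this kernel bookkeeping to be the hardest step. Hypercontractivity upgrades the second moment to all moments. The same computation applied to $\widehat{\mathcal Z}_\eps-\widehat{\mathcal Z}_{\eps'}$ gives an extra factor $\eps^{\theta}$, by trading a little regularity through the mollifier. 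Lemma~\ref{lem:kolmogorov} then yields a Cauchy sequence in probability in $B^{\tilde\alpha_0-\kappa,p_a}_{\infty,\infty}$, using $r$ large and $p=\infty$ via the embeddings; its limit defines $\xi^{(0)}$.
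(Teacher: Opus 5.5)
Your proposal is correct and follows essentially the same route as the paper. The $K\neq0$ coefficients, the terms $\mathcal Z^{(n)}_{\mathcal R}(v_\eps)$ for $n\geq3$ and $v_\eps$ are handled pathwise from $v_\eps\to v$ via Lemma~\ref{prop:only-scalar-counterterm}, while $\widehat{\mathcal Z}_\eps$ is treated as a centred second-chaos (Wick) object using hypercontractivity and the weighted Kolmogorov criterion, Lemma~\ref{lem:kolmogorov}. Your sketch is in fact more explicit than the paper's: it explains the exponent $\zeta+\delta_{\mathcal R}$ in $\tilde\alpha_0$ (quadratic terms whose smoother factor has positive regularity) and the matching integrability condition $\zeta>-|\fraks|/2-\delta_{\mathcal R}$, both of which the paper's one-line covariance bound $1+|z|^{4\zeta+2m-\kappa}$ leaves implicit.
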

Thus, we shall apply Theorem~\ref{prop:fixedpoint} to \eqref{eq:first-singular-transformed} with $W^{(K)}_\eps(t):=t\xi^{(K)}_\eps$.
We first check that the regularity assumptions are satisfied.
For the numerical check: for $\alpha_K:= \frac{\tilde{\alpha}_K-\kappa}{m}$, $\alpha= \min_{K\in \mathcal{K}} {\alpha_K}$ and $h=1$ the condition \eqref{conditions} follows from
\begin{equ}\label{conditions2}
 \min_{{J}\in \mathcal{K}}\tilde{\alpha}_J+ \min_{K\in \mathcal{K}} \big(\tilde{\alpha_K} - d(K) \big) >-m(1-2a)+2\kappa \ .
\end{equ}
Since every $K\in \mathcal{K}\setminus\{0\}$ satisfies $d(K)\geq \delta_{\mathcal{R}}$ we have  $\min_{{J}\in \mathcal{K}}\tilde{\alpha}_J=\tilde{\alpha}_0$ and 
$\min_{K\in \mathcal{K}} \big(\tilde{\alpha_K} - d(K) \big)\geq\zeta$
Thus \eqref{conditions2} follows from
$\tilde{\alpha}_0 + \zeta= 
\min\{3\zeta+m,2\zeta+\delta_{\mathcal{R}}\}
>-m(1-2a)+2\kappa, $
which is satisfied by choosing $a,\kappa$ sufficiently small.

To conclude the proof,  the convergence $v_\varepsilon\to v:=\xi*G$,
Theorem~\ref{th:taylor} and the Faà di Bruno's formula imply that
\begin{equ}\label{convergence_of_exp}
e^{-v_\eps}\to e^{-v}, \qquad e^{v_\eps}\to e^{v}
\end{equ}
 in $B_{\infty,\infty}^{\zeta+m-\kappa,e_\eta}$. 
This implies that the initial condition $w_\eps(0)= e^{-v_\eps}u_0\to  e^{-v}u_0=w_0(0)$ 
 indeed converges in $B_{p,\infty}^{-m\nu, e_\ell}$ for $\ell$ sufficiently large by Theorem~\ref{prop:young}.
 Again, by \eqref{convergence_of_exp} and Theorem~\ref{prop:young} we conclude that $u_\eps=e^{v_\eps}w_\eps\to e^{v}w$ in the same space (up to increasing the weight again). Locally uniform convergence follows by repeatedly 
using the Besov embedding Corollary~\ref{prop:embedding} Item~\ref{Item_with_num_cond} and restarting the equation.
\end{proof}

\begin{proof}[Proof of Claim~\ref{claim}]
The claim for $K\neq 0$ follows directly from the fact that $
v_\varepsilon\to v
$
in $
B_{\infty,\infty}^{\zeta+m-\kappa,p_a}
$
combined with Lemma~\ref{prop:only-scalar-counterterm}. For the same reason the only term in 
$\xi_\varepsilon^{(0)}
=
-\widehat{\mathcal Z}_\varepsilon
 -\sum_{n\geq 3}\mathcal Z_{\mathcal R}^{(n)}(v_\varepsilon)
 +v_\varepsilon,$ which requires treatment is 
 $\widehat{\mathcal Z}_\varepsilon$.

Since this is a very standard argument, we stay brief and refer to \cite{hairer_labbe_15_simple} and \cite{MS25} for more details. This term is a finite linear combination of standard quadratic Wick products with
covariances which are bounded by a multiple of
$
1+|z|^{4\zeta+2m-\kappa},
$
which is locally integrable. 
Gaussian hypercontractivity and the
weighted Kolmogorov criterion, Lemma~\ref{lem:kolmogorov}, then show that 
$
\widehat{\mathcal Z}_\varepsilon
 \to \widehat{\mathcal Z}$ in
$B_{\infty,\infty}^{2\zeta+m-\kappa,p_a}$
in probability.
\end{proof}

	\bibliographystyle{Martin}
	\bibliography{HomGroups.bib}

\newcommand{\etalchar}[1]{$^{#1}$}
\begin{thebibliography}{HRRvV25}
\def\myhref#1#2{\href{#2}{\nolinkurl{#1}}}

\bibitem[Ant22]{mouzard_22_weyl}
\textsc{M.~Antoine}.
\newblock Weyl law for the {A}nderson {H}amiltonian on a two-dimensional
  manifold.
\newblock \emph{Ann. Inst. Henri Poincar\'e{} Probab. Stat.} \textbf{58},
  no.~3, (2022), 1385--1425.
\newblock
  \myhref{doi:10.1214/21-aihp1216}{https://doi.org/10.1214/21-aihp1216}.

\bibitem[BB26]{BB16}
\textsc{I.~Bailleul} and \textsc{Y.~Bruned}.
\newblock Locality for singular stochastic {PDE}s.
\newblock \emph{Ann. Probab.} \textbf{54}, no.~2, (2026), 768--795.
\newblock \myhref{doi:10.1214/25-aop1778}{https://doi.org/10.1214/25-aop1778}.

\bibitem[BCCH21]{BCCH20}
\textsc{Y.~Bruned}, \textsc{A.~Chandra}, \textsc{I.~Chevyrev}, and
  \textsc{M.~Hairer}.
\newblock Renormalising {SPDE}s in regularity structures.
\newblock \emph{J. Eur. Math. Soc. (JEMS)} \textbf{23}, no.~3, (2021),
  869--947.
\newblock \myhref{doi:10.4171/jems/1025}{https://doi.org/10.4171/jems/1025}.

\bibitem[BCD18]{Tindel_ref1}
\textsc{H.~Bahouri}, \textsc{J.-Y. Chemin}, and \textsc{R.~Danchin}.
\newblock Tempered distributions and {F}ourier transform on the {H}eisenberg
  group.
\newblock \emph{Ann. H. Lebesgue} \textbf{1}, (2018), 1--46.
\newblock \myhref{doi:10.5802/ahl.1}{https://doi.org/10.5802/ahl.1}.

\bibitem[BCH{\etalchar{+}}24]{fractals_1}
\textsc{F.~Baudoin}, \textsc{L.~Chen}, \textsc{C.-H. Huang},
  \textsc{C.~Ouyang}, \textsc{S.~Tindel}, and \textsc{J.~Wang}.
\newblock {Parabolic Anderson model in bounded domains of recurrent metric
  measure spaces} (2024).
\newblock \myhref{arXiv:2401.01797}{https://arxiv.org/abs/2401.01797}.
\newblock
  \myhref{https://arxiv.org/abs/2401.01797}{https://arxiv.org/abs/2401.01797}.

\bibitem[BCH{\etalchar{+}}25]{Tindel_Big}
\textsc{F.~Baudoin}, \textsc{L.~Chen}, \textsc{C.-H. Huang},
  \textsc{C.~Ouyang}, \textsc{S.~Tindel}, and \textsc{J.~Wang}.
\newblock {Weighted Besov spaces on Heisenberg groups and applications to the
  Parabolic Anderson model}, 2025.
\newblock \myhref{arXiv:2501.04593}{https://arxiv.org/abs/2501.04593}.
\newblock
  \myhref{https://arxiv.org/abs/2501.04593}{https://arxiv.org/abs/2501.04593}.

\bibitem[BFKG12]{Gallager_phase}
\textsc{H.~Bahouri}, \textsc{C.~Fermanian-Kammerer}, and \textsc{I.~Gallagher}.
\newblock Phase-space analysis and pseudodifferential calculus on the
  {H}eisenberg group.
\newblock \emph{Ast\'erisque} , no. 342, (2012), vi+127.

\bibitem[BG01]{Tindel_ref_gallager}
\textsc{H.~Bahouri} and \textsc{I.~Gallagher}.
\newblock Paraproduit sur le groupe de {H}eisenberg et applications.
\newblock \emph{Rev. Mat. Iberoamericana} \textbf{17}, no.~1, (2001), 69--105.
\newblock \myhref{doi:10.4171/RMI/289}{https://doi.org/10.4171/RMI/289}.

\bibitem[BGHZ22]{BGHZ22}
\textsc{Y.~Bruned}, \textsc{F.~Gabriel}, \textsc{M.~Hairer}, and
  \textsc{L.~Zambotti}.
\newblock Geometric stochastic heat equations.
\newblock \emph{Journal of the American Mathematical Society} \textbf{35},
  no.~1, (2022), 1--80.

\bibitem[BHZ19]{BHZ19}
\textsc{Y.~Bruned}, \textsc{M.~Hairer}, and \textsc{L.~Zambotti}.
\newblock Algebraic renormalisation of regularity structures.
\newblock \emph{Invent. Math.} \textbf{215}, no.~3, (2019), 1039--1156.
\newblock
  \myhref{doi:10.1007/s00222-018-0841-x}{https://doi.org/10.1007/s00222-018-0841-x}.

\bibitem[BOTW23]{Tindel_ito}
\textsc{F.~Baudoin}, \textsc{C.~Ouyang}, \textsc{S.~Tindel}, and
  \textsc{J.~Wang}.
\newblock Parabolic {A}nderson model on {H}eisenberg groups: the {I}t\^o{}
  setting.
\newblock \emph{J. Funct. Anal.} \textbf{285}, no.~1, (2023), Paper No. 109920,
  44.
\newblock
  \myhref{doi:10.1016/j.jfa.2023.109920}{https://doi.org/10.1016/j.jfa.2023.109920}.

\bibitem[BSS25]{BSS25}
\textsc{L.~Broux}, \textsc{H.~Singh}, and \textsc{R.~Steele}.
\newblock {Renormalised models for variable coefficient singular SPDEs}.
\newblock \emph{arXiv preprint} (2025).
\newblock \myhref{arXiv:2507.06851}{https://arxiv.org/abs/2507.06851}.

\bibitem[CCHS22]{CCHS22}
\textsc{A.~Chandra}, \textsc{I.~Chevyrev}, \textsc{M.~Hairer}, and
  \textsc{H.~Shen}.
\newblock Langevin dynamic for the 2{D} {Y}ang-{M}ills measure.
\newblock \emph{Publ. Math. Inst. Hautes \'{E}tudes Sci.} \textbf{136}, (2022),
  1--147.
\newblock
  \myhref{doi:10.1007/s10240-022-00132-0}{https://doi.org/10.1007/s10240-022-00132-0}.

\bibitem[CCHS24]{CCHS22b}
\textsc{A.~Chandra}, \textsc{I.~Chevyrev}, \textsc{M.~Hairer}, and
  \textsc{H.~Shen}.
\newblock Stochastic quantisation of {Y}ang-{M}ills-{H}iggs in 3{D}.
\newblock \emph{Invent. Math.} \textbf{237}, no.~2, (2024), 541--696.
\newblock
  \myhref{doi:10.1007/s00222-024-01264-2}{https://doi.org/10.1007/s00222-024-01264-2}.

\bibitem[CFX26]{CFX26}
\textsc{Y.~Chen}, \textsc{B.~Fehrman}, and \textsc{W.~Xu}.
\newblock Periodic homogenisation for two dimensional generalised parabolic
  {A}nderson model.
\newblock \emph{Probab. Theory Related Fields} \textbf{194}, no. 3-4, (2026),
  1235--1286.
\newblock
  \myhref{doi:10.1007/s00440-026-01465-1}{https://doi.org/10.1007/s00440-026-01465-1}.

\bibitem[CH16]{CH16}
\textsc{A.~Chandra} and \textsc{M.~Hairer}.
\newblock An analytic {BPHZ} theorem for regularity structures.
\newblock \emph{arXiv preprint} (2016).
\newblock \myhref{arXiv:1612.08138}{https://arxiv.org/abs/1612.08138}.

\bibitem[Clo25]{clozeau}
\textsc{N.~Clozeau}.
\newblock An inductive approach to stochastic estimates for the
  $\varphi^{4}_2$-equation with correlated coefficient field.
\newblock \emph{arXiv preprint} (2025).
\newblock \myhref{arXiv:2509.11309}{https://arxiv.org/abs/2509.11309}.
\newblock
  \myhref{https://arxiv.org/abs/2509.11309}{https://arxiv.org/abs/2509.11309}.

\bibitem[CR24]{LPtheorem}
\textsc{D.~Cardona} and \textsc{M.~Ruzhansky}.
\newblock Littlewood-{P}aley theorem, {N}ikolskii inequality, {B}esov spaces,
  {F}ourier and spectral multipliers on graded {L}ie groups.
\newblock \emph{Potential Anal.} \textbf{60}, no.~3, (2024), 965--1005.
\newblock
  \myhref{doi:10.1007/s11118-023-10076-7}{https://doi.org/10.1007/s11118-023-10076-7}.

\bibitem[CS26a]{CSYYM}
\textsc{I.~Chevyrev} and \textsc{H.~Shen}.
\newblock Invariant measure and universality of the 2{D} {Y}ang-{M}ills
  {L}angevin dynamic.
\newblock \emph{Comm. Pure Appl. Math.} \textbf{79}, no.~8, (2026), 1973--2102.
\newblock \myhref{doi:10.1002/cpa.70043}{https://doi.org/10.1002/cpa.70043}.

\bibitem[CS26b]{CS25}
\textsc{N.~Clozeau} and \textsc{H.~Singh}.
\newblock Renormalisation of singular {SPDE}s with correlated coefficients.
\newblock \emph{SIGMA Symmetry Integrability Geom. Methods Appl.} \textbf{22},
  (2026), Paper No. 021, 35.
\newblock
  \myhref{doi:10.3842/SIGMA.2026.021}{https://doi.org/10.3842/SIGMA.2026.021}.

\bibitem[CX23]{CX23}
\textsc{Y.~Chen} and \textsc{W.~Xu}.
\newblock Periodic homogenisation for $p(\phi)_2$ (2023).
\newblock \myhref{arXiv:2311.16398}{https://arxiv.org/abs/2311.16398}.
\newblock
  \myhref{https://arxiv.org/abs/2311.16398}{https://arxiv.org/abs/2311.16398}.

\bibitem[CYY26]{fractals_2}
\textsc{H.~Chen}, \textsc{Yifan}, and \textsc{Yang}.
\newblock Wick renormalized parabolic stochastic quantization equations on
  rough metric measure spaces (2026).
\newblock \myhref{arXiv:2605.05442}{https://arxiv.org/abs/2605.05442}.
\newblock
  \myhref{https://arxiv.org/abs/2605.05442}{https://arxiv.org/abs/2605.05442}.

\bibitem[DDD19]{DDD19}
\textsc{A.~Dahlqvist}, \textsc{J.~Diehl}, and \textsc{B.~K. Driver}.
\newblock The parabolic {A}nderson model on {R}iemann surfaces.
\newblock \emph{Probab. Theory Related Fields} \textbf{174}, no. 1-2, (2019),
  369--444.
\newblock
  \myhref{doi:10.1007/s00440-018-0857-6}{https://doi.org/10.1007/s00440-018-0857-6}.

\bibitem[DHZ94]{Nice_noteDHZ94}
\textsc{J.~Dziuba\'nski}, \textsc{W.~Hebisch}, and \textsc{J.~Zienkiewicz}.
\newblock Note on semigroups generated by positive {R}ockland operators on
  graded homogeneous groups.
\newblock \emph{Studia Math.} \textbf{110}, no.~2, (1994), 115--126.
\newblock
  \myhref{doi:10.4064/sm-110-2-115-126}{https://doi.org/10.4064/sm-110-2-115-126}.

\bibitem[FH20]{friz_hairer_20_introduction}
\textsc{P.~Friz} and \textsc{M.~Hairer}.
\newblock \emph{A Course on Rough Paths: With an Introduction to Regularity
  Structures}.
\newblock Universitext. Springer International Publishing, 2020.

\bibitem[FM12]{Führ}
\textsc{H.~F\"uhr} and \textsc{A.~Mayeli}.
\newblock Homogeneous {B}esov spaces on stratified {L}ie groups and their
  wavelet characterization.
\newblock \emph{J. Funct. Spaces Appl.} (2012), Art. ID 523586, 41.
\newblock
  \myhref{doi:10.1155/2012/523586}{https://doi.org/10.1155/2012/523586}.

\bibitem[FMV06]{FurioliMelziVeneruso}
\textsc{G.~Furioli}, \textsc{C.~Melzi}, and \textsc{A.~Veneruso}.
\newblock Littlewood-{P}aley decompositions and {B}esov spaces on {L}ie groups
  of polynomial growth.
\newblock \emph{Math. Nachr.} \textbf{279}, no. 9-10, (2006), 1028--1040.
\newblock
  \myhref{doi:10.1002/mana.200510409}{https://doi.org/10.1002/mana.200510409}.

\bibitem[Fol75]{folland_75_subelliptic}
\textsc{G.~B. Folland}.
\newblock Subelliptic estimates and function spaces on nilpotent {L}ie groups.
\newblock \emph{Ark. Mat.} \textbf{13}, no.~2, (1975), 161--207.
\newblock \myhref{doi:10.1007/BF02386204}{https://doi.org/10.1007/BF02386204}.

\bibitem[FR16]{fischer_ruzhansky_18_quantisation}
\textsc{V.~Fischer} and \textsc{M.~Ruzhansky}.
\newblock \emph{Quantization on nilpotent {L}ie groups}, vol. 314 of
  \emph{Progress in Mathematics}.
\newblock Birkh\"{a}user/Springer, [Cham], 2016,  xiii+557.
\newblock
  \myhref{doi:10.1007/978-3-319-29558-9}{https://doi.org/10.1007/978-3-319-29558-9}.

\bibitem[FS82]{folland_stein_82_hardy}
\textsc{G.~B. Folland} and \textsc{E.~M. Stein}.
\newblock \emph{Hardy spaces on homogeneous groups}, vol.~28 of
  \emph{Mathematical Notes}.
\newblock Princeton University Press, Princeton, N.J.; University of Tokyo
  Press, Tokyo, 1982,  xii+285.

\bibitem[GH19]{GerasimovicsHairer}
\textsc{A.~Gerasimovi\v{c}s} and \textsc{M.~Hairer}.
\newblock H\"ormander's theorem for semilinear {SPDE}s.
\newblock \emph{Electron. J. Probab.} \textbf{24}, (2019), Paper No. 132, 56.
\newblock \myhref{doi:10.1214/19-ejp387}{https://doi.org/10.1214/19-ejp387}.

\bibitem[GIP15]{gubinelli_imkeller_perkowski_15}
\textsc{M.~Gubinelli}, \textsc{P.~Imkeller}, and \textsc{N.~Perkowski}.
\newblock Paracontrolled distributions and singular {PDE}s.
\newblock \emph{Forum Math. Pi} \textbf{3}, (2015), 75.
\newblock \myhref{doi:10.1017/fmp.2015.2}{https://doi.org/10.1017/fmp.2015.2}.

\bibitem[GS12]{gallager_sire}
\textsc{I.~Gallagher} and \textsc{Y.~Sire}.
\newblock Besov algebras on {L}ie groups of polynomial growth.
\newblock \emph{Studia Math.} \textbf{212}, no.~2, (2012), 119--139.
\newblock \myhref{doi:10.4064/sm212-2-2}{https://doi.org/10.4064/sm212-2-2}.

\bibitem[GT10]{Gub_tindel_sewing}
\textsc{M.~Gubinelli} and \textsc{S.~Tindel}.
\newblock Rough evolution equations.
\newblock \emph{Ann. Probab.} \textbf{38}, no.~1, (2010), 1--75.
\newblock \myhref{doi:10.1214/08-AOP437}{https://doi.org/10.1214/08-AOP437}.

\bibitem[Hai14]{Hai14}
\textsc{M.~Hairer}.
\newblock A theory of regularity structures.
\newblock \emph{Inventiones mathematicae} \textbf{198}, no.~2, (2014),
  269--504.
\newblock
  \myhref{doi:10.1007/s00222-014-0505-4}{https://doi.org/10.1007/s00222-014-0505-4}.

\bibitem[HL15]{hairer_labbe_15_simple}
\textsc{M.~Hairer} and \textsc{C.~Labb\'{e}}.
\newblock A simple construction of the continuum parabolic {A}nderson model on
  {$\mbR^2$}.
\newblock \emph{Electron. Commun. Probab.} \textbf{20}, (2015), no. 43, 11.
\newblock
  \myhref{doi:10.1214/ECP.v20-4038}{https://doi.org/10.1214/ECP.v20-4038}.

\bibitem[HRRvV25]{hu2025besovtriebellizorkinspaceshomogeneous}
\textsc{G.~Hu}, \textsc{D.~Rottensteiner}, \textsc{M.~Ruzhansky}, and
  \textsc{J.~T. van Velthoven}.
\newblock {Besov and Triebel-Lizorkin spaces on homogeneous groups}, 2025.
\newblock \myhref{arXiv:2501.08997}{https://arxiv.org/abs/2501.08997}.
\newblock
  \myhref{https://arxiv.org/abs/2501.08997}{https://arxiv.org/abs/2501.08997}.

\bibitem[HS90]{HebischSikora90}
\textsc{W.~Hebisch} and \textsc{A.~Sikora}.
\newblock A smooth subadditive homogeneous norm on a homogeneous group.
\newblock \emph{Studia Math.} \textbf{96}, no.~3, (1990), 231--236.
\newblock
  \myhref{doi:10.4064/sm-96-3-231-236}{https://doi.org/10.4064/sm-96-3-231-236}.

\bibitem[HS23]{HS23manifolds}
\textsc{M.~Hairer} and \textsc{H.~Singh}.
\newblock Regularity structures on manifolds and vector bundles.
\newblock \emph{arXiv preprint} (2023).
\newblock \myhref{arXiv:2308.05049}{https://arxiv.org/abs/2308.05049}.

\bibitem[HS25a]{HSper2}
\textsc{M.~Hairer} and \textsc{H.~Singh}.
\newblock Homogenisation of singular {SPDE}s.
\newblock \emph{arXiv preprint} (2025).
\newblock \myhref{arXiv:2510.19339}{https://arxiv.org/abs/2510.19339}.

\bibitem[HS25b]{HSper}
\textsc{M.~Hairer} and \textsc{H.~Singh}.
\newblock Periodic space-time homogenisation of the $\phi^4_2$ equation.
\newblock \emph{Journal of Functional Analysis} \textbf{288}, no.~5, (2025),
  110762.
\newblock
  \myhref{doi:10.1016/j.jfa.2024.110762}{https://doi.org/10.1016/j.jfa.2024.110762}.

\bibitem[HZZZ24]{KineticZhu}
\textsc{Z.~Hao}, \textsc{X.~Zhang}, \textsc{R.~Zhu}, and \textsc{X.~Zhu}.
\newblock Singular kinetic equations and applications.
\newblock \emph{Ann. Probab.} \textbf{52}, no.~2, (2024), 576--657.
\newblock \myhref{doi:10.1214/23-aop1666}{https://doi.org/10.1214/23-aop1666}.

\bibitem[Mey92]{meyer1992wavelets}
\textsc{Y.~Meyer}.
\newblock \emph{Wavelets and operators}.
\newblock No.~37. Cambridge university press, 1992.

\bibitem[MS25]{MS25}
\textsc{A.~Mayorcas} and \textsc{H.~Singh}.
\newblock Singular {SPDE}s on homogeneous lie groups.
\newblock \emph{Proceedings of the Royal Society of Edinburgh: Section A
  Mathematics} (2025), 1–72.
\newblock \myhref{doi:10.1017/prm.2025.1}{https://doi.org/10.1017/prm.2025.1}.

\bibitem[Sak79]{Besov_def}
\textsc{K.~Saka}.
\newblock {Besov spaces and Sobolev spaces on a nilpotent Lie group}.
\newblock \emph{Tohoku Mathematical Journal} \textbf{31}, no.~4, (1979), 383 --
  437.
\newblock
  \myhref{doi:10.2748/tmj/1178229728}{https://doi.org/10.2748/tmj/1178229728}.

\bibitem[Sin25]{Sin23}
\textsc{H.~Singh}.
\newblock {Canonical solutions to non-translation invariant singular SPDEs}.
\newblock \emph{Electronic Journal of Probability} \textbf{30}, (2025), 1--24.
\newblock \myhref{doi:10.1214/25-EJP1315}{https://doi.org/10.1214/25-EJP1315}.

\bibitem[tR98]{Elst_Robinson}
\textsc{A.~{ter Elst}} and \textsc{D.~W. Robinson}.
\newblock {Weighted Subcoercive Operators on Lie Groups}.
\newblock \emph{Journal of Functional Analysis} \textbf{157}, no.~1, (1998),
  88--163.
\newblock
  \myhref{doi:10.1006/jfan.1998.3259}{https://doi.org/10.1006/jfan.1998.3259}.

\end{thebibliography}

\end{document}